\newcolumntype{x}[1]{>{\centering\arraybackslash}p{#1}}
\newcommand\diagonal[4]{
\multicolumn{1}{p{#2}|}{\hskip-\tabcolsep
$\vcenter{\begin{tikzpicture}[baseline=0,anchor=south west,inner sep=#1]
\path[use as bounding box] (0,0) rectangle (#2+2\tabcolsep,\baselineskip);
\node[minimum width={#2+2\tabcolsep},minimum height=\baselineskip+\extrarowheight] (box) {};
\draw (box.north west) -- (box.south east);
\node[anchor=south west] at (box.south west) {#3};
\node[anchor=north east] at (box.north east) {#4};
\end{tikzpicture}}$\hskip-\tabcolsep}}
\newcommand{\doublewidetilde}[1]{{
\mathpalette\double@widetilde{#1}
}}
\newcommand{\double@widetilde}[2]{
\sbox\z@{$\m@th#1\widetilde{#2}$}
\ht\z@=.9\ht\z@
\widetilde{\box\z@}
}
\renewcommand{\setminus}{-}
\newcommand{\Ad}{\textup{Ad}}
\newcommand{\ad}{\textup{ad}}
\newcommand{\id}{\textup{id}}
\newcommand{\1}{\mathbf{1}}
\newcommand{\0}{\mathbf{0}}
\newcommand{\Ind}{\textup{Ind}}
\newcommand{\sgn}{\textup{sgn}}
\newcommand{\diag}{\textup{diag}}
\newcommand{\Hom}{\textup{Hom}}
\newcommand{\blank}{\,\cdot\,}
\newcommand{\even}{{\textup{even}}}
\newcommand{\odd}{{\textup{odd}}}
\DeclareMathOperator{\supp}{supp}
\DeclareMathOperator{\tr}{tr}
\DeclareMathOperator{\Cl}{Cl}
\DeclareMathOperator{\dirac}{\slashed{D}\hspace{-.05em}}
\DeclareMathOperator{\Pol}{Pol}
\newcommand{\Kslash}{\slashed{K}}
\newcommand{\Lslash}{\slashed{L}}
\newcommand{\pislash}{\slashed{\pi}}
\newcommand{\tauslash}{\slashed{\tau}}
\newcommand{\proj}{\textup{proj}}
\newcommand{\End}{\textup{End}}
\newcommand{\HC}{\textup{HC}}
\newcommand{\GL}{\textup{GL}}
\newcommand{\upO}{\textup{O}}
\newcommand{\SO}{\textup{SO}}
\newcommand{\so}{\mathfrak{so}}
\newcommand{\Spin}{\textup{Spin}}
\newcommand{\Pin}{\textup{Pin}}
\renewcommand{\AA}{\mathbb{A}}
\newcommand{\BB}{\mathbb{B}}
\newcommand{\CC}{\mathbb{C}}
\newcommand{\NN}{\mathbb{N}}
\newcommand{\RR}{\mathbb{R}}
\renewcommand{\SS}{\mathbb{S}}
\newcommand{\ZZ}{\mathbb{Z}}
\newcommand{\calD}{\mathcal{D}}
\newcommand{\calE}{\mathcal{E}}
\newcommand{\calF}{\mathcal{F}}
\newcommand{\calM}{\mathcal{M}}
\newcommand{\calT}{\mathcal{T}}
\newcommand{\calV}{\mathcal{V}}
\newcommand{\calW}{\mathcal{W}}
\newcommand{\fraka}{\mathfrak{a}}
\newcommand{\frakc}{\mathfrak{c}}
\newcommand{\frakg}{\mathfrak{g}}
\newcommand{\frakh}{\mathfrak{h}}
\newcommand{\frakk}{\mathfrak{k}}
\newcommand{\frakm}{\mathfrak{m}}
\newcommand{\frakn}{\mathfrak{n}}
\newcommand{\fraks}{\mathfrak{s}}
\newcommand{\frakt}{\mathfrak{t}}
\newcommand{\LocalPoles}{\,\,\backslash \kern-0.8em{\backslash} \; }
\newcommand{\GlobalPoles}{\,\,/ \kern-0.8em{/} \; }
\numberwithin{equation}{section}
\newtheorem{Theorem}{Theorem}[section]
\newtheorem{Corollary}[Theorem]{Corollary}
\newtheorem{Lemma}[Theorem]{Lemma}
\newtheorem{Proposition}[Theorem]{Proposition}
\newtheorem{thmalph}{Theorem}
\newtheorem{coralph}[thmalph]{Corollary}
\theoremstyle{definition}
\newtheorem{Remark}[Theorem]{Remark} }
\begin{document}

\newcommand{\arXivNumber}{1702.02326}

\renewcommand{\PaperNumber}{084}

\FirstPageHeading

\ShortArticleName{Knapp--Stein Type Intertwining Operators for Symmetric Pairs II}

\ArticleName{Knapp--Stein Type Intertwining Operators\\ for Symmetric Pairs~II. -- The Translation Principle\\ and Intertwining Operators for Spinors}

\Author{Jan FRAHM and Bent {\O}RSTED}
\AuthorNameForHeading{J.~Frahm and B.~{\O}rsted}
\Address{Department of Mathematics, Aarhus University,\\ Ny Munkegade 118, 8000 Aarhus C, Denmark}
\Email{\href{mailto:frahm@math.au.dk}{frahm@math.au.dk}, \href{mailto:orsted@math.au.dk}{orsted@math.au.dk}}

\ArticleDates{Received May 17, 2019, in final form October 29, 2019; Published online November 02, 2019}

\Abstract{For a symmetric pair $(G,H)$ of reductive groups we extend to a large class of generalized principal series representations our previous construction of meromorphic families of symmetry breaking operators. These operators intertwine between a possibly vector-valued principal series of~$G$ and one for~$H$ and are given explicitly in terms of their integral kernels. As an application we give a complete classification of symmetry breaking operators from spinors on a Euclidean space to spinors on a hyperplane, intertwining for a~double cover of the conformal group of the hyperplane.}

\Keywords{Knapp--Stein intertwiners; intertwining operators; symmetry breaking operators; symmetric pairs; principal series; translation principle}

\Classification{22E45; 47G10}

\section{Introduction}

In the study of representations of real reductive Lie groups, intertwining operators play a decisive role. The most prominent family of such operators is given by the standard Knapp--Stein operators which intertwine between two principal series representations of a~group~$G$. More recently, intertwining operators have been studied and used in the framework of branching problems, i.e., the restriction of a representation to a subgroup $H\subseteq G$ and its decomposition. Here one is interested in operators from a representation of $G$ to a representation of $H$, intertwining for the subgroup. Such operators are also called \textit{symmetry breaking operators}, a term coined by T.~Kobayashi in his program for branching problems (see, e.g.,~\cite{Kob15}).

\looseness=-1 Such symmetry breaking operators have been studied in great detail in the special case of the conformal groups corresponding to a Euclidean space and a hyperplane by Kobayashi--Speh~\cite{KS15}. In this case some of the operators turn out to be differential operators, namely exactly the operators found by A.~Juhl~\cite{Juh09} in connection with his study of $Q$-curvature and holography in conformal geometry. Further connections to elliptic boundary value problems~\cite{MOZ16} and automorphic forms~\cite{MO14b} indicate the broad spectrum of applications that these operators provide.

In our recent work with Y.~Oshima~\cite{MOO16} we generalized the construction of symmetry breaking operators by Kobayashi--Speh to a large class of symmetric pairs $(G,H)$ and spherical principal series representations, proving meromorphic dependence on the parameters in general and generic uniqueness in some special cases. In this paper we shall further extend our construction to the case of vector-valued principal series representations; we establish many of the properties, now for arbitrary vector bundles, in particular the meromorphic dependence on the parameters. The main argument is a version of a well-known translation principle, namely by tensoring with a finite-dimensional representation of the group. As an application and illustration we give all details for the case of spinors on a Euclidean space carrying a representation of the spin cover of the conformal group; here we invoke a variation of the method of finding the eigenvalues on $K$-types of Knapp--Stein operators. The result is a complete classification of symmetry breaking operators from spinors on the Euclidean space to spinors on a hyperplane.

Let us now explain our results in more detail.

\subsection{The translation principle}

Let $G$ be a real reductive group and $H\subseteq G$ a reductive subgroup. For parabolic subgroups $P=MAN\subseteq G$ and $P_H=M_HA_HN_H\subseteq H$ we form the generalized principal series representations (smooth normalized parabolic induction)
\begin{gather*}
 \Ind_P^G\big(\xi\otimes e^\lambda\otimes\1\big) \qquad \mbox{and} \qquad \Ind_{P_H}^H\big(\eta\otimes e^\nu\otimes\1\big),
\end{gather*}
where $\xi$ and $\eta$ are finite-dimensional representations of $M$ and $M_H$, and $\lambda\in\fraka_\CC^*$, $\nu\in\fraka_{H,\CC}^*$, the complexified duals of the Lie algebras of $A$ and $A_H$. Consider the space
\begin{gather*}
 \Hom_H\big(\Ind_P^G\big(\xi\otimes e^\lambda\otimes\1\big),\Ind_{P_H}^H\big(\eta\otimes e^\nu\otimes\1\big)\big)
\end{gather*}
of continuous $H$-intertwining operators. Realizing $\Ind_P^G\big(\xi\otimes e^\lambda\otimes\1\big)$ and $\Ind_{P_H}^H\big(\eta\otimes e^\nu\otimes\1\big)$ on the spaces of smooth sections of the vector bundles
\begin{gather*}
 \calV_\lambda=G\times_P\big(\xi\otimes e^{\lambda+\rho}\otimes\1\big)\to G/P \qquad \mbox{and} \qquad \calW_\nu=H\times_{P_H}\big(\eta\otimes e^{\nu+\rho_H}\otimes\1\big)\to H/P_H,
\end{gather*}
where $\rho\in\fraka^*$ and $\rho_H\in\fraka_H^*$ are the half sums of positive roots, one can identify continuous $H$-intertwining operators with their distribution kernels. More precisely, Kobayashi--Speh~\cite{KS15} showed that taking distribution kernels is a linear isomorphism
\begin{gather*}
 \Hom_H\big(\Ind_P^G\big(\xi\otimes e^\lambda\otimes\1\big),\Ind_{P_H}^H\big(\eta\otimes e^\nu\otimes\1\big)\big) \stackrel{\sim}{\to} \big(\calD'(G/P,\calV_\lambda^*)\otimes W_\nu\big)^{\Delta(P_H)}, \qquad\! A \mapsto K^A,
\end{gather*}
where $\calV_\lambda^*$ is the dual bundle of $\calV_\lambda$ and $W_\nu$ the $P_H$-representation defining $\calW_\nu$ (see Section~\ref{sec:DistributionSections} for the precise definition).

Now suppose $(\tau,E)$ is an irreducible finite-dimensional representation of $G$. Then the restriction $\tau|_P$ of $\tau$ to $P$ contains a unique irreducible subrepresentation $i\colon E'\hookrightarrow E$ (see Lemma~\ref{lem:UniquePstableSubspace}). With respect to the Langlands decomposition $P=MAN$ this subrepresentation is of the form $\tau'\otimes e^{\mu'}\otimes\1$ for some irreducible finite-dimensional representation $\tau$ of $M$ and $\mu'\in\fraka^*$. Let $(E')^\vee=\Hom_\CC(E',\CC)$ denote the dual of $E'$.

\begin{thmalph}[see Theorem~\ref{thm:TranslationPrinciple} and Proposition~\ref{prop:TranslationPrincipleDistributionKernel}]\label{thm:IntroA}
For every $P_H$-equivariant quotient $p\colon E|_{P_H}\allowbreak \twoheadrightarrow E''$ with $E''\simeq\tau''\otimes e^{\mu''}\otimes\1$ as $P_H$-representations, there is a unique linear map
\begin{gather*}
\Phi\colon \ \Hom_H\big(\Ind_P^G\big(\xi\otimes e^\lambda\otimes\1\big),\Ind_{P_H}^H\big(\eta\otimes e^\nu\otimes\1\big)\big)\\
\qquad {}\to \Hom_H\big(\Ind_P^G\big((\xi\otimes\tau')\otimes e^{\lambda+\mu'}\otimes\1\big),\Ind_{P_H}^H\big((\eta\otimes\tau'')\otimes e^{\nu+\mu''}\otimes\1\big)\big)
\end{gather*}
with the property that for every intertwining operator $A$ with distribution kernel $K^A$, the distribution kernel $K^{\Phi(A)}$ of $\Phi(A)$ is given by $K^{\Phi(A)}=\varphi\otimes K^A$, the multiplication of $K^A$ with the smooth section $\varphi\in C^\infty\big(G/P,G\times_P(E')^\vee\big)\otimes E''$ defined by
\begin{gather*} \varphi(g) = p\circ\tau(g)\circ i \in \Hom_\CC(E',E'') \simeq (E')^\vee\otimes E'', \qquad g\in G. \end{gather*}
\end{thmalph}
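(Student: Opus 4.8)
The plan is to exploit the Kobayashi--Speh isomorphism $A\mapsto K^A$ in both directions: first show that the map $K\mapsto \varphi\otimes K$ lands in the correct space of $\Delta(P_H)$-invariant distribution kernels for the tensored bundles, then transport it back to intertwining operators via the inverse of the Kobayashi--Speh isomorphism, and finally check uniqueness. Concretely, I would define $\Phi$ as the composite
\[
 \Hom_H(\ldots) \xrightarrow{\;\sim\;} (\calD'(G/P,\calV_\lambda^*)\otimes W_\nu)^{\Delta(P_H)} \xrightarrow{\;K\mapsto\varphi\otimes K\;} (\calD'(G/P,\calV_{\lambda+\mu'}^*\otimes(E')^\vee)\otimes W_\nu\otimes E'')^{\Delta(P_H)} \xrightarrow{\;\sim\;} \Hom_H(\ldots),
\]
where the last identification uses $\calV_\lambda^*\otimes G\times_P(E')^\vee \cong \calV_{\lambda+\mu'}^{*,\mathrm{new}}$ for the bundle associated to $(\xi\otimes\tau')\otimes e^{\lambda+\mu'}\otimes\1$ — this is just the statement that $E'\cong\tau'\otimes e^{\mu'}\otimes\1$ as a $P$-representation, so tensoring the inducing data by $(E')^\vee$ shifts $\xi\rightsquigarrow\xi\otimes\tau'$ and $\lambda\rightsquigarrow\lambda+\mu'$ (up to the $\rho$-shift, which matches because $E'$ is genuinely a $P$-module, not merely an $MA$-module) — and similarly $W_\nu\otimes E''$ is the $P_H$-representation defining $\calW_{\nu+\mu''}$ for inducing data $(\eta\otimes\tau'')\otimes e^{\nu+\mu''}\otimes\1$. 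Uniqueness is then automatic from the injectivity of $A\mapsto K^A$.

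The two substantive points are (i) that $\varphi$ really is a smooth section of $G\times_P(E')^\vee$ valued in $E''$, i.e.\ that it transforms correctly under right translation by $P$, and (ii) that multiplication by $\varphi$ preserves $\Delta(P_H)$-invariance. For (i): since $\tau$ is a representation of all of $G$ and $i\colon E'\hookrightarrow E$, $p\colon E\twoheadrightarrow E''$ are the $P$- resp.\ $P_H$-maps, for $g\in G$ and $q\in P$ we compute $\varphi(gq)=p\circ\tau(gq)\circ i = p\circ\tau(g)\circ\tau(q)\circ i = p\circ\tau(g)\circ i\circ (\tau'\otimes e^{\mu'}\otimes\1)(q)$ using $P$-equivariance of $i$; this is exactly the cocycle identity saying $\varphi\in C^\infty(G/P,G\times_P(E')^\vee)\otimes E''$ once one also records that $p$ is only $P_H$-equivariant but $E''$ is a constant coefficient space here (the $P_H$-action on $E''$ will be used in (ii), not (i)). For (ii): the $\Delta(P_H)$-action on a kernel $K$ in $(\calD'(G/P,\calV_\lambda^*)\otimes W_\nu)^{\Delta(P_H)}$ combines the geometric action on $G/P$ (translation and the bundle cocycle for $\calV_\lambda^*$) with the $W_\nu$-action. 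Multiplying by $\varphi$, the $\calV_\lambda^*$-cocycle picks up the $(E')^\vee$-cocycle from $\varphi(g)\mapsto \varphi(gm^{-1})$ for $m\in P_H\cap P$-type elements, while $p\circ\tau(g)\circ i$ acquires a $P_H$-twist on the target $E''$; matching these against the shifted cocycles for $\calV_{\lambda+\mu'}^*$ and $W_\nu\otimes E''$ is precisely the invariance under the new $\Delta(P_H)$, and it follows because $\varphi$ is genuinely built from the global $G$-representation $\tau$, so the relevant identities are instances of $\tau(h\cdot)=\tau(h)\tau(\cdot)$ for $h\in H$.

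The main obstacle I anticipate is purely bookkeeping rather than conceptual: carefully tracking the $\rho$- and $\rho_H$-shifts and the interplay between the $P$-structure (needed so that $i$ lands in the subrepresentation of $\tau|_P$, hence no extra $\rho$ correction appears) and the merely $P_H$-equivariant quotient $p$, so that on the $H$-side the section $\varphi$ contributes the correct twist by $E''\cong\tau''\otimes e^{\mu''}\otimes\1$. One must verify that $\Hom_\CC(E',E'')\cong (E')^\vee\otimes E''$ as a $P_H\cap P$-module matches the fiber of $G\times_P(E')^\vee$ tensored with the $P_H$-module $E''$ in the way required by the definition of the $\Delta(P_H)$-invariant subspace in Section~\ref{sec:DistributionSections}; this is where the precise conventions of the paper must be invoked. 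Granting those conventions, the proof reduces to the two diagram-chases sketched above together with the observation that a smooth-function multiplier never destroys the distributional nature of $K^A$ nor its wavefront/support properties, so $\varphi\otimes K^A$ is again a legitimate distribution kernel and hence, by Kobayashi--Speh, the kernel of a unique continuous $H$-intertwining operator $\Phi(A)$.
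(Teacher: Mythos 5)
Your proof is correct, but it takes a genuinely different route from the paper. The paper (Theorem~\ref{thm:TranslationPrinciple}) constructs $\Phi(A)$ directly at the operator level as the composition \eqref{eq:TranslationPrincipleComposition}: include $\Ind^G_P((\xi\otimes\tau')\otimes e^{\lambda+\mu'}\otimes\1)$ into $\Ind^G_P((\xi\otimes e^\lambda\otimes\1)\otimes E|_P)$ via $i$, apply the natural isomorphism $\iota_\tau$ to rewrite this as $\Ind^G_P(\xi\otimes e^\lambda\otimes\1)\otimes E$, apply $A\otimes\id_E$, undo $\iota_\tau$ on the $H$-side, and project via $p$. Each factor is manifestly a continuous $H$-map, so $\Phi(A)$ is a legitimate symmetry breaking operator by construction, and the content of Proposition~\ref{prop:TranslationPrincipleDistributionKernel} is then the explicit unwinding of this composite to read off $K^{\Phi(A)}=\varphi\otimes K^A$. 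You instead define $\Phi$ on the kernel side: show that $\varphi$ is a genuine smooth section of $(\calE')^\vee\otimes E''$ (right $P$-equivariance, using only that $i$ is a $P$-map), show that multiplication by $\varphi$ sends $\Delta(P_H)$-invariants to $\Delta(P_H)$-invariants (left $P_H$-equivariance, using only that $p$ is a $P_H$-map and $\tau$ is a representation of all of $G$), check the bundle identifications $\calV_\lambda^*\otimes(\calE')^\vee\simeq\calV_{\lambda+\mu'}^*$ and $W_\nu\otimes E''\simeq W_{\nu+\mu''}$ (which work out cleanly exactly because $E'$ and $E''$ carry a genuine $A$-character rather than just an $MA$-structure, so the $\rho$-shifts cancel as you noted), and then transport through the Kobayashi--Speh bijection in both directions. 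Your route isolates the two equivariance identities that make the kernel formula work and makes the uniqueness claim trivial; the price is that you must invoke both directions of the Kobayashi--Speh isomorphism (in particular its surjectivity) to know that $\varphi\otimes K^A$ comes from a continuous $H$-intertwiner, whereas the paper gets continuity for free from the operator-level construction and only needs injectivity of $A\mapsto K^A$ to compute the kernel. One small imprecision in your sketch of point (ii): the $\Delta(P_H)$-invariance condition involves the \emph{left} translation action of $P_H$ on $G/P$, so the relevant identity for $\varphi$ is $\varphi(h^{-1}g)=(\tau''\otimes e^{\mu''}\otimes\1)(h^{-1})\circ\varphi(g)$ for $h\in P_H$, which is exactly the $P_H$-equivariance of $p$; your reference to $\varphi(gm^{-1})$ is the right $P$-equivariance already used in (i). The substance is right, but the two equivariances play distinct roles.
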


The translation principle allows to construct new intertwining operators from existing ones. But one can also reverse the roles and in some cases use the translation principle to classify intertwining operators (see, e.g., Theorem~\ref{thm:SBOsOppositeSignClassification}).

We note that although $\varphi$ is a non-trivial analytic section, the map $\Phi$ might be trivial for certain parameters. However, being a multiplication operator, $\Phi$ behaves nicely when applied to holomorphic/meromorphic families of intertwining operators (see Remark~\ref{rem:PropertiesMultMap} for details).

\subsection{Knapp--Stein type intertwining operators}

Now assume that $(G,H)$ is a symmetric pair, i.e., $H$ is an open subgroup of the fixed points~$G^\sigma$ of an involution $\sigma$ of~$G$. For simplicity we further assume that $G$ is in the Harish-Chandra class. Let $P=MAN\subseteq G$ be a $\sigma$-stable parabolic subgroup, then $P_H=P\cap H$ is a parabolic subgroup of $H$ and we write $P_H=M_HA_HN_H$ for its Langlands decomposition.

In our previous paper \cite{MOO16} with Y.~Oshima we constructed meromorphic families of intertwining operators between the spherical principal series representations $\Ind_P^G\big(\1\otimes e^\lambda\otimes\1\big)$ and $\Ind_{P_H}^H\big(\1\otimes e^\nu\otimes\1\big)$. We now generalize this construction and obtain intertwining operators between vector-valued principal series representations using the translation principle.

Assume that $P$ and its opposite parabolic $\overline{P}$ are conjugate via the Weyl group, i.e., $P=\tilde{w}_0\overline{P}\tilde{w}_0^{-1}$, where $\tilde{w}_0$ is a representative of the longest Weyl group element $w_0$. Then for any finite-dimensional representation $\xi$ of $M$ and $\alpha,\beta\in\fraka_\CC^*$ we consider the function
\begin{gather*} K_{\xi,\alpha,\beta}(g) = \xi\big(m\big(\tilde{w}_0^{-1}g^{-1}\big)\big)^{-1}a\big(\tilde{w}_0^{-1}g^{-1}\big)^\alpha a\big(\tilde{w}_0^{-1}g^{-1}\sigma(g)\big)^\beta, \qquad g\in G, \end{gather*}
where $m(g)$ and $a(g)$ are the densely defined projections of $g\in\overline{N}MAN$ onto the $M$- and $A$-component. For $\xi=\1$ the trivial representation, these are the kernel functions constructed in \cite{MOO16}. Since $a(g)$ is only defined on an open dense subset, it may happen that the factor $a\big(\tilde{w}_0^{-1}g^{-1}\sigma(g)\big)$ is not defined for any $g\in G$, whence we additionally assume that the domain of definition for $a\big(\tilde{w}_0^{-1}g^{-1}\sigma(g)\big)$ is not empty. In \cite{MOO16} we showed that in this case $K_{\xi,\alpha,\beta}(g)$ is defined on an open dense subset in $G$, and also gave a criterion to check this.

Under the above assumptions, the translation principle combined with our previous results from \cite{MOO16} yields:

\begin{coralph}[see Corollary~\ref{cor:VectorValuedKnappSteinKernels}]\label{cor:IntroB}
Assume that the finite-dimensional representation $\xi$ of $M$ is extendible to $G$ $($see Section~{\rm \ref{sec:VectorValuedKnappSteinKernels}} for the precise definition$)$. Then the functions $K_{\xi,\alpha,\beta}(g)$ extend to a meromorphic family of distributions
\begin{gather*} K_{\xi,\alpha,\beta} \in (\calD'(G/P,\calV_\lambda^*)\otimes W_\nu)^{\Delta(P_H)}, \qquad \alpha,\beta\in\fraka_\CC^*, \end{gather*}
where
\begin{gather}
\lambda = -w_0\alpha+\sigma\beta-w_0\beta+\rho, \qquad \nu = -\alpha|_{\fraka_{H,\CC}}-\rho_H.\label{eq:IntroLambdaNu}
\end{gather}
Therefore, they give rise to a meromorphic family of intertwining operators
\begin{gather*} A(\xi,\alpha,\beta) \in \Hom_H\big(\Ind_P^G\big(\tilde{w}_0\xi\otimes e^\lambda\otimes\1\big),\Ind_{P_H}^H(\xi|_{M_H}\otimes e^\nu\otimes\1)\big). \end{gather*}
\end{coralph}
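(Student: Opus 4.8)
The plan is to deduce the statement from the translation principle (Theorem~\ref{thm:TranslationPrinciple} and Proposition~\ref{prop:TranslationPrincipleDistributionKernel}) applied to the scalar-valued symmetry breaking operators of \cite{MOO16}, which are exactly the case $\xi=\1$. Concretely, recall from \cite{MOO16} that under the standing assumptions (in particular that the domain of $a(\tilde{w}_0^{-1}g^{-1}\sigma(g))$ is nonempty) the functions $K_{\1,\alpha,\beta}$ extend to a meromorphic family of distributions in $(\calD'(G/P,\calV_{\lambda_0}^*)\otimes W_{\nu_0})^{\Delta(P_H)}$ with
$$ \lambda_0=-w_0\alpha+\sigma\beta-w_0\beta+\rho, \qquad \nu_0=-\alpha|_{\fraka_{H,\CC}}-\rho_H, $$
and hence to a meromorphic family $A(\1,\alpha,\beta)$ of $H$-intertwining operators between the corresponding spherical principal series of $G$ and of $H$.

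Next I would unwind the hypothesis. By the definition of extendibility (Section~\ref{sec:VectorValuedKnappSteinKernels}), $\xi$ being extendible to $G$ provides an irreducible finite-dimensional representation $(\tau,E)$ of $G$ whose unique irreducible $P$-subrepresentation $i:E'\hookrightarrow E$ (Lemma~\ref{lem:UniquePstableSubspace}) is isomorphic, as a $P$-representation, to $(\tilde{w}_0\xi)\otimes e^{\mu'}\otimes\1$, together with a $P_H$-equivariant quotient $p:E|_{P_H}\twoheadrightarrow E''$ with $E''\simeq(\xi|_{M_H})\otimes e^{\mu''}\otimes\1$; here $\mu'\in\fraka^*$ is the $\fraka$-weight of $E'$ and $\mu''\in\fraka_H^*$ the corresponding weight of the quotient. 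Feeding this data together with $A(\1,\alpha',\beta)$, for a parameter $\alpha'$ to be fixed in a moment, into Theorem~\ref{thm:TranslationPrinciple} yields an intertwining operator
$$ \Phi\bigl(A(\1,\alpha',\beta)\bigr)\in\Hom_H\bigl(\Ind_P^G((\tilde{w}_0\xi)\otimes e^{\lambda_0(\alpha',\beta)+\mu'}\otimes\1),\Ind_{P_H}^H((\xi|_{M_H})\otimes e^{\nu_0(\alpha',\beta)+\mu''}\otimes\1)\bigr), $$
whose distribution kernel, by Proposition~\ref{prop:TranslationPrincipleDistributionKernel}, is $\varphi\otimes K_{\1,\alpha',\beta}$ with $\varphi(g)=p\circ\tau(g)\circ i\in\Hom_\CC(E',E'')$.

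The key step is then to identify the multiplier $\varphi$ explicitly. Restricting to the open dense set $\overline{N}MAN\tilde{w}_0^{-1}$, i.e.\ where $\tilde{w}_0^{-1}g^{-1}\in\overline{N}MAN$, I would compute $p\circ\tau(g)\circ i$ in Bruhat coordinates adapted to this cell, using that $N$ acts trivially on $i(E')$ while $\overline{N}$ acts trivially on $\tau(\tilde{w}_0^{-1})i(E')$, the unique irreducible $\overline{P}$-subrepresentation of $E$. The outcome is
$$ \varphi(g)=a(\tilde{w}_0^{-1}g^{-1})^{\delta}\,\xi\bigl(m(\tilde{w}_0^{-1}g^{-1})\bigr)^{-1}\qquad\text{on }\overline{N}MAN\tilde{w}_0^{-1}, $$
where $\delta=-w_0\mu'\in\fraka^*$ (this already matches the right $A$-equivariance of $\varphi$), and a bookkeeping of weights gives the compatibility $\mu''=(w_0\mu')|_{\fraka_{H,\CC}}=-\delta|_{\fraka_{H,\CC}}$. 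Taking $\alpha'=\alpha-\delta$ we obtain $\varphi\otimes K_{\1,\alpha-\delta,\beta}=K_{\xi,\alpha,\beta}$ on this cell, while $\lambda_0(\alpha-\delta,\beta)+\mu'$ and $\nu_0(\alpha-\delta,\beta)+\mu''$ collapse to precisely the $\lambda,\nu$ of \eqref{eq:IntroLambdaNu}.

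It remains to assemble the conclusion. Since $\varphi$ is a \emph{globally defined} smooth section of $G\times_P(E')^\vee\otimes E''$ — it is merely a matrix coefficient of the finite-dimensional representation $\tau$ and in particular does not depend on $\alpha,\beta$ — multiplication by $\varphi$ carries a meromorphic family of distributions to a meromorphic family of distributions; hence $K_{\xi,\alpha,\beta}=\varphi\otimes K_{\1,\alpha-\delta,\beta}$ extends to a meromorphic family in $(\calD'(G/P,\calV_\lambda^*)\otimes W_\nu)^{\Delta(P_H)}$ with $\lambda,\nu$ as in \eqref{eq:IntroLambdaNu}, the $\Delta(P_H)$-invariance being supplied by Theorem~\ref{thm:TranslationPrinciple}. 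By the Kobayashi--Speh isomorphism \cite{KS15} this family corresponds to a meromorphic family $A(\xi,\alpha,\beta)$ of $H$-intertwining operators, which by construction equals $\Phi(A(\1,\alpha-\delta,\beta))$, so the intertwining property holds identically. The only nonformal ingredient is the identification of $\varphi$ in the previous paragraph: one must run the Bruhat-cell computation of $p\circ\tau(g)\circ i$ and track the weights $\mu',\mu'',\delta$ carefully enough to see that the translation-induced shifts in $\lambda$ and $\nu$ cancel against the reparametrization $\alpha\rightsquigarrow\alpha-\delta$; everything else — meromorphy, $\Delta(P_H)$-equivariance, and the intertwining property — is then formal given Theorem~\ref{thm:TranslationPrinciple}, \cite{MOO16} and the kernel--operator dictionary.
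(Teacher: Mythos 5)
Your argument is essentially the paper's: extendibility gives an irreducible $(\tau,E)$ with $E^{\overline{N}}\simeq\xi$ as $M$-representations, and one pushes the scalar kernels $K_{\1,\alpha',\beta}$ of \cite{MOO16} (Theorem~\ref{thm:KnappSteinKernels}) through the translation principle, arriving at $K_{\xi,\alpha,\beta}=\varphi\otimes K_{\1,\alpha',\beta}$ after the reparametrization $\alpha'=\alpha+\mu'$ with $\mu'$ the $\fraka$-weight of $E^{\overline{N}}$ (equivalently your $\alpha'=\alpha-\delta$, $\delta=-w_0\mu'$) — which is precisely what the paper packages once and for all in Corollary~\ref{cor:KnappSteinTranslationPrinciple} and then cites in the proof of Corollary~\ref{cor:VectorValuedKnappSteinKernels}. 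One small loose end in your Bruhat-cell sketch: the stated ingredients ``$N$ acts trivially on $i(E')$, $\overline{N}$ acts trivially on $\tau(\tilde{w}_0^{-1})i(E')$'' do not by themselves justify the final reduction $p\circ\tau(n^{-1})=p$ on $E^{\overline{N}}$ for a generic $n\in N$; you also need that $p$ is the projection along the $\fraka$-weight decomposition and that $N$ strictly raises $\fraka$-weights, exactly as used in the proof of Corollary~\ref{cor:KnappSteinTranslationPrinciple}.
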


As in \cite[Corollary B]{MOO16} this construction gives in particular lower bounds on multiplicities:
\begin{gather*} \dim\Hom_H\big(\Ind_P^G\big(\tilde{w}_0\xi\otimes e^\lambda\otimes\1\big),\Ind_{P_H}^H\big(\xi|_{M_H}\otimes e^\nu\otimes\1\big)\big) \geq 1 \end{gather*}
for all parameters $(\lambda,\nu)$ of the form \eqref{eq:IntroLambdaNu}.

\subsection{Symmetry breaking operators for rank one orthogonal groups}

We illustrate the translation principle in two examples, the first one being scalar-valued principal series representations for the symmetric pair $(G,H)=(\upO(n+1,1),\upO(n,1))$. The parabolic subgroups $P$ and $P_H$ satisfy
\begin{gather*} M_H \simeq \upO(n-1)\times\upO(1) \subseteq \upO(n)\times\upO(1) \simeq M, \\
 A_H = A \simeq \RR_+, \qquad N_H \simeq \RR^{n-1} \subseteq \RR^n \simeq N. \end{gather*}
We identify $\fraka_\CC^*\simeq\CC$ such that $\rho=\frac{n}{2}$, and denote by $\sgn$ the non-trivial character of $O(1)\subseteq M_H\subseteq M$. Then for $\delta,\varepsilon\in\ZZ/2\ZZ$ and $\lambda,\nu\in\CC$ we consider intertwining operators between
\begin{gather*} \pi_{\lambda,\delta} = \Ind_P^G\big(\sgn^\delta\otimes e^\lambda\otimes\1\big) \qquad \mbox{and} \qquad \tau_{\nu,\varepsilon} = \Ind_{P_H}^H\big(\sgn^\varepsilon\otimes e^\nu\otimes\1\big). \end{gather*}
Tensoring with characters of $G$, it is easy to see that
\begin{gather*} \Hom_H(\pi_{\lambda,\delta}|_H,\tau_{\nu,\varepsilon}) \simeq \Hom_H(\pi_{\lambda,1-\delta}|_H,\tau_{\nu,1-\varepsilon}). \end{gather*}

For the pair $(G,H)$, the restriction of a distribution kernel $K\in(\calD'(G/P,\calV_\lambda^*)\otimes W_\nu)^{\Delta(P_H)}$ to the open dense Bruhat cell in $G/P$, which is isomorphic to $\overline{N}\simeq\RR^n$, defines an isomorphism onto a subspace of $\calD'\big(\RR^n\big)$ (see Kobayashi--Speh~\cite[Theorem 3.16]{KS15} or Theorem~\ref{thm:DistributionKernelOnNbar} for details). Let $\calD'\big(\RR^n\big)_{\lambda,\nu}^+$, resp.~$\calD'\big(\RR^n\big)_{\lambda,\nu}^-$, denote the space of distribution kernels defining intertwining operators in $\Hom_H(\pi_{\lambda,\delta}|_H,\tau_{\nu,\varepsilon})$ with $\delta+\varepsilon\equiv0(2)$, resp.~$\delta+\varepsilon\equiv1(2)$.

The space $\calD'\big(\RR^n\big)_{\lambda,\nu}^+$ was classified by Kobayashi--Speh~\cite{KS15}, and we briefly describe this classification, borrowing their notation. First, they construct a meromorphic family of distributions $K^{\AA,+}_{\lambda,\nu}\in\calD'\big(\RR^n\big)_{\lambda,\nu}^+$, $(\lambda,\nu)\in\CC^2$, given by
\begin{gather*} K^{\AA,+}_{\lambda,\nu}(x',x_n) = |x_n|^{\lambda+\nu-\frac{1}{2}}\big(|x'|^2+x_n^2\big)^{-\nu-\frac{n-1}{2}}, \qquad (x',x_n)\in\RR^{n-1}\times\RR=\RR^n. \end{gather*}
Then \looseness=-1 they show that every distribution in $\calD'\big(\RR^n\big)_{\lambda,\nu}^+$ is given by $K_{\lambda,\nu}^{\AA,+}$ or a regularization of it. By a detailed analysis of the meromorphic nature, the poles and all possible residues of the fa\-mi\-ly~$K^{\AA,+}_{\lambda,\nu}$ they obtain a complete description of $\calD'\big(\RR^n\big)_{\lambda,\nu}^+$ for all $(\lambda,\nu)\in\CC^2$. More precisely, let
\begin{gather*} L_\even = \big\{(-\rho-i,-\rho_H-j)\colon i,j\in\NN,i-j\in2\NN\big\}, \end{gather*}
then the corresponding statement for symmetry breaking operators is: For $\delta+\varepsilon\equiv0(2)$ we have
\begin{gather*} \dim\Hom_H(\pi_{\lambda,\delta}|_H,\tau_{\nu,\varepsilon}) = \dim\calD'\big(\RR^n\big)_{\lambda,\nu}^+ = \begin{cases}2,&\mbox{for $(\lambda,\nu)\in L_\even$,}\\ 1,&\mbox{for $(\lambda,\nu)\in\CC^2\setminus L_\even$,}\end{cases} \end{gather*}
and \looseness=-1 every intertwining operator is given by the distribution kernel $K^{\AA,+}_{\lambda,\nu}$ or a regularization of it.

We apply the translation principle to the kernels $K^{\AA,+}_{\lambda,\nu}$ to obtain a meromorphic family $K_{\lambda,\nu}^{\AA,-}\in\calD'\big(\RR^n\big)_{\lambda,\nu}^-$ given by
\begin{gather*} K^{\AA,-}_{\lambda,\nu}(x) = x_n\cdot K^{\AA,+}_{\lambda-1,\nu}(x) = \sgn(x_n)|x_n|^{\lambda+\nu-\frac{1}{2}}\big(|x'|^2+x_n^2\big)^{-\nu-\frac{n-1}{2}}, \qquad x\in\RR^n. \end{gather*}
In Theorem~\ref{thm:SBOsOppositeSign} we derive from the poles and residues of $K^{\AA,+}_{\lambda,\nu}$ all poles and all possible residues of the family $K^{\AA,-}_{\lambda,\nu}$ and use them to classify intertwining operators. For the statement let
\begin{gather*} L_\odd = \big\{(-\rho-i,-\rho_H-j)\colon i,j\in\NN,i-j\in2\NN+1\big\}. \end{gather*}

\begin{thmalph}[see Theorems~\ref{thm:SBOsOppositeSign} and \ref{thm:SBOsOppositeSignClassification}]\label{thm:IntroC}
For $\delta+\varepsilon\equiv1(2)$ we have
\begin{gather*} \dim\Hom_H(\pi_{\lambda,\delta}|_H,\tau_{\nu,\varepsilon}) = \begin{cases}2,&\text{for $(\lambda,\nu)\in L_\odd$,}\\1,&\text{for $(\lambda,\nu)\in\CC^2\setminus L_\odd$,}\end{cases} \end{gather*}
and \looseness=-1 every intertwining operator is given by the distribution kernel $K^{\AA,-}_{\lambda,\nu}$ or a regularization of it.
\end{thmalph}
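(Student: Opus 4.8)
The plan is to deduce Theorems~\ref{thm:SBOsOppositeSign} and~\ref{thm:SBOsOppositeSignClassification} from the translation principle together with the Kobayashi--Speh classification of $\calD'(\RR^n)_{\lambda,\nu}^+$. The input representation is $E=\CC^{n+2}$, the standard representation of $G=\upO(n+1,1)$. It is irreducible, so Lemma~\ref{lem:UniquePstableSubspace} applies; the unique irreducible $P$-subrepresentation $i\colon E'\hookrightarrow E$ is the isotropic line defining $P$, and with respect to the Langlands decomposition $P=MAN$ one computes $E'\simeq\sgn\otimes e^{\mu'}\otimes\1$, where $\sgn$ is the nontrivial character of the factor $\upO(1)\subseteq M$ and $\mu'$ corresponds to $1$ under the identification $\fraka_\CC^*\simeq\CC$ with $\rho=\frac{n}{2}$. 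On the other side, the decomposition $\RR^{n+1,1}=\RR^{n,1}\oplus\RR v_0$ of $E$ under $H=\upO(n,1)$, with $v_0$ spanning the line transverse to the hyperplane, exhibits an $H$-equivariant (hence $P_H$-equivariant) quotient $p\colon E\twoheadrightarrow\RR v_0=:E''$ with $E''\simeq\1\otimes e^0\otimes\1$. With these choices the smooth section $\varphi$ of Proposition~\ref{prop:TranslationPrincipleDistributionKernel}, restricted to the open Bruhat cell $\overline N\simeq\RR^n$, equals $x_n$ up to a nonzero constant, since it is the $v_0$-component of the image of a generator of $E'$ under the cell parameter. Hence the translation principle yields a linear map $\Hom_H(\pi_{\lambda-1,\delta}|_H,\tau_{\nu,\varepsilon})\to\Hom_H(\pi_{\lambda,\delta+1}|_H,\tau_{\nu,\varepsilon})$ which, on distribution kernels and after the restriction-to-$\overline N$ isomorphism of Theorem~\ref{thm:DistributionKernelOnNbar}, is multiplication by $x_n$; in particular it maps $\calD'(\RR^n)_{\lambda-1,\nu}^+$ into $\calD'(\RR^n)_{\lambda,\nu}^-$ (the parity of $\delta+\varepsilon$ flips) and sends $K^{\AA,+}_{\lambda-1,\nu}$ to $K^{\AA,-}_{\lambda,\nu}$. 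Since $\calD'(\RR^n)_{\lambda-1,\nu}^+\neq0$ for all $(\lambda,\nu)$ by~\cite{KS15} and multiplication by $x_n$ does not annihilate the regularized analytic continuation family, this already gives $\dim\calD'(\RR^n)_{\lambda,\nu}^-\geq1$ for all $(\lambda,\nu)$.

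For Theorem~\ref{thm:SBOsOppositeSign} the point is then that the meromorphic family $K^{\AA,-}_{\lambda,\nu}=x_n\,K^{\AA,+}_{\lambda-1,\nu}$ has the same poles as the family $K^{\AA,+}_{\lambda-1,\nu}$ in $(\lambda,\nu)$, with residues obtained from the residues of the latter by multiplication with $x_n$, the only new feature being that a residue of $K^{\AA,+}$ supported on the hyperplane $\{x_n=0\}$ (the residue responsible for a differential symmetry breaking operator) is killed by $x_n$. Going through Kobayashi--Speh's explicit list of poles and residues of $K^{\AA,+}$, the residue producing a distribution independent of the regularized family occurs exactly on $L_\even$, and it is supported on $\{x_n=0\}$ precisely on the sublocus $\nu-\lambda=\rho-\rho_H$, i.e.\ the ``$i=j$'' part of $L_\even$. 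Transporting this through the shift $\lambda\mapsto\lambda+1$ and the multiplication by $x_n$, a short parameter count shows that $K^{\AA,-}_{\lambda,\nu}$ together with its residues spans a space of dimension $2$ exactly for $(\lambda,\nu)\in L_\odd$ and of dimension $1$ otherwise; in particular $\dim\calD'(\RR^n)_{\lambda,\nu}^-\geq2$ on $L_\odd$.

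For the matching upper bound we argue as in Kobayashi--Speh. Any $K\in\calD'(\RR^n)_{\lambda,\nu}^-$ is $\Delta(\frakn_H)$-invariant and $(\frakm_H+\fraka)$-equivariant with the prescribed weights and the odd parity in the $\upO(1)$-direction; away from $\{x_n=0\}$ these conditions force $K$ to be a scalar multiple of $K^{\AA,-}_{\lambda,\nu}$ or of its regularization. The extension across $\{x_n=0\}$ is unique up to distributions supported on $\{x_n=0\}$ carrying the appropriate equivariance, which form a finite-dimensional space; these are the kernels of the differential symmetry breaking operators of the opposite parity, and may themselves be obtained from the differential operators of~\cite{KS15} via the translation principle, multiplying their kernels by $x_n$. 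Combining the one-dimensional contribution of the analytic continuation family (with the extra independent residue exactly on $L_\odd$) with the hyperplane-supported contribution, and checking in each case that these are either independent or already accounted for, gives $\dim\calD'(\RR^n)_{\lambda,\nu}^-\leq2$, with equality only on $L_\odd$. Together with the previous paragraph this yields the dimension formula; the final assertion, that every intertwining operator has kernel $K^{\AA,-}_{\lambda,\nu}$ or a regularization of it, then follows by transporting the corresponding statement for kernels back through the isomorphism $A\mapsto K^A$.

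The main obstacle is the bookkeeping at the exceptional parameters. One has to decide precisely when multiplication by $x_n$ kills a residue of the family $K^{\AA,+}_{\lambda-1,\nu}$ --- so that the dimension does \emph{not} jump to $2$ there --- versus when it produces a genuinely new distribution, and one has to classify the hyperplane-supported elements of $\calD'(\RR^n)_{\lambda,\nu}^-$; making both of these exact is what pins the jump down to $L_\odd$ rather than a slightly larger or smaller set, and it rests on Kobayashi--Speh's detailed analysis of the residues in the $+$ case.
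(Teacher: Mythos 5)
Your construction of the lower bound matches the paper: the translation principle with the defining representation $E=\CC^{n+2}$, the computation $\varphi|_{\overline N}\sim x_n$, and the resulting multiplication map $x_n\colon\calD'(\RR^n)_{\lambda-1,\nu}^+\to\calD'(\RR^n)_{\lambda,\nu}^-$ carrying $K^{\AA,+}_{\lambda-1,\nu}$ to $K^{\AA,-}_{\lambda,\nu}$ are exactly what the paper does. The divergence is in the upper bound, and here your argument has a genuine gap.

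You propose to redo the Kobayashi--Speh-style analysis for the opposite parity: classify $\calD'(\RR^n)_{\lambda,\nu}^-$ away from $\{x_n=0\}$, then control the extensions by classifying the $\{x_n=0\}$-supported invariant distributions with the odd parity, and finally compare. You explicitly defer the classification of hyperplane-supported distributions as ``bookkeeping,'' but that is not a side issue --- it is precisely the content of the theorem, since those distributions are what could make the multiplicity jump. The paper sidesteps this entirely with a short but decisive lemma (Lemma~\ref{lem:SBOsOppositeSignSingular}): if $K\in\calD'(\RR^n)_{\lambda,\nu}^-$ satisfies $x_nK=0$, then $K$ must be of the form $u_0(x')\delta(x_n)$, and the $m_0$-invariance of opposite parity forces $u_0(-x')=-u_0(x')$ while the $\upO(n-1)$-invariance forces $u_0(-x')=u_0(x')$, so $u_0=0$. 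This makes $x_n\colon\calD'(\RR^n)_{\lambda,\nu}^-\to\calD'(\RR^n)_{\lambda+1,\nu}^+$ \emph{injective}, so the upper bound $\dim\calD'(\RR^n)_{\lambda,\nu}^-\le\dim\calD'(\RR^n)_{\lambda+1,\nu}^+$ is immediate from the $+$ classification, with the jump matching $L_\odd$ because $(\lambda,\nu)\in L_\odd\iff(\lambda+1,\nu)\in L_\even$. This is the missing idea in your argument: instead of carrying out a new analysis in the $-$ case, transport the known upper bound from the $+$ case along the injective multiplication map.

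Two smaller corrections to your description of the residue picture. First, the residue responsible for the \emph{differential} symmetry breaking operators, $\widetilde K^{\CC,+}_{\lambda,\nu}$, is supported at the origin $\{0\}$, not on the hyperplane $\{x_n=0\}$; the hyperplane-supported residue $\widetilde K^{\BB,+}_{\lambda,\nu}$ lives on $\backslash\backslash^+$ and gives nonlocal operators on $\RR^{n-1}$. Second, multiplication by $x_n$ on $\calD'(\RR^n)_{\lambda,\nu}^+$ has nontrivial kernel only on the two lines $\lambda+\nu=-\tfrac12$ (where it kills $\widetilde K^{\BB,+}$) and $\lambda-\nu=-\tfrac12$ (where it kills $\widetilde K^{\CC,+}=\delta$), per Lemma~\ref{lem:SBOsSameSignSingular}; it is not tied to the ``$i=j$'' part of $L_\even$ as you suggest. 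These misidentifications would propagate into your proposed parameter count and give the wrong exceptional set, which is another reason the bookkeeping route is delicate to get right.
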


We remark that for $\lambda-\nu=-\frac{3}{2}-2\ell$, $\ell\in\NN$, there exists a residue of $K^{\AA,-}_{\lambda,\nu}$ which is supported at the origin in $\RR^n$, inducing a differential intertwining operator. In this setting $G/P\simeq S^n$ and $H/P_H\simeq S^{n-1}$, and the differential intertwining operators form the family of odd order conformally invariant differential operators $C^\infty\big(S^n,\calV_\lambda\big)\to C^\infty\big(S^{n-1},\calW_\nu\big)$ studied previously by Juhl~\cite{Juh09} (see also \cite{KOSS15}). The even order Juhl operators were already obtained as residues of the family $K^{\AA,+}_{\lambda,\nu}$ by Kobayashi--Speh~\cite{KS15}.

\subsection{Symmetry breaking operators for rank one pin groups}

The second (and more involved) illustration of the translation principle is for the symmetric pair $\big(\widetilde{G},\widetilde{H}\big)=(\Pin(n+1,1),\Pin(n,1))$. Here $\Pin(p,q)$ denotes a certain double cover of the group $\upO(p,q)$ (see Appendix~\ref{app:CliffordSpin} for details) so that we have compatible double covers $\widetilde{G}\to G$ and $\widetilde{H}\to H$ with $G$ and $H$ as in the previous section. The preimages of the parabolic subgroups~$P$ and $P_H$ under the covering maps are $\widetilde{P}\simeq\widetilde{M}AN$ and $\widetilde{P}_H\simeq\widetilde{M}_HA_HN_H$ with $A$, $N$, $A_H$ and~$N_H$ as in the previous section and
\begin{gather*} \widetilde{M}_H \simeq \Pin(n-1)\times\upO(1) \subseteq \Pin(n)\times\upO(1) \simeq \widetilde{M}. \end{gather*}
The fundamental representations of the Lie algebra $\so(n)$ of $\widetilde{M}$ are the exterior power representations $\wedge^k\RR^n$ and the spin representations. Symmetry breaking operators for the exterior power representations have been investigated in detail by Kobayashi--Speh~\cite{KS18} (see also Fischmann--Juhl--Somberg~\cite{FJS16} and Kobayashi--Kubo--Pevzner~\cite{KKP16} for the case of differential symmetry breaking operators). Here we focus on the fundamental spin representations.

The group $\Pin(n)$ can be realized inside the Clifford algebra $\Cl(n)$ with $n$ generators (see Appendix~\ref{app:CliffordSpin} for details). The fundamental spin representations of $\Pin(n)$ are the restrictions of irreducible representations of the complex Clifford algebra $\Cl(n;\CC)=\Cl(n)\otimes_\RR\CC$ to $\Pin(n)$, and therefore we do not distinguish between the representations of $\Cl(n;\CC)$ and their restrictions to $\Pin(n)$. For even $n$ the Clifford algebra $\Cl(n;\CC)$ has a unique irreducible representation, and for odd $n$ it has two inequivalent irreducible representations. Let $(\zeta_n,\SS_n)$ be an irreducible representation of $\Cl(n;\CC)$ and $(\zeta_{n-1},\SS_{n-1})$ an irreducible representation of $\Cl(n-1;\CC)$. If $\sgn$ denotes the non-trivial representation of $\upO(1)$, we have for all $\lambda,\nu\in\CC$ and $\delta,\varepsilon\in\ZZ/2\ZZ$ principal series representations
\begin{gather*}
\slashed{\pi}_{\lambda,\delta} = \Ind_{\widetilde{P}}^{\widetilde{G}}\big(\big(\zeta_n\otimes\sgn^\delta\big)\otimes e^\lambda\otimes\1\big) \qquad \mbox{and} \qquad \slashed{\tau}_{\nu,\varepsilon} = \Ind_{\widetilde{P}_H}^{\widetilde{H}}\big(\big(\zeta_{n-1}\otimes\sgn^\varepsilon\big)\otimes e^\nu\otimes\1\big),
\end{gather*}
and we study intertwining operators in the space
\begin{gather*} \Hom_{\widetilde{H}}\big(\slashed{\pi}_{\lambda,\delta}|_{\widetilde{H}},\slashed{\tau}_{\nu,\varepsilon}\big). \end{gather*}
As above, taking distribution kernels and restricting them to the open dense Bruhat cell in $G/P$ identifies the space of intertwining operators with a subspace
\begin{gather*} \calD'\big(\RR^n;\Hom_\CC(\SS_n,\SS_{n-1})\big)_{\lambda,\nu}^\pm \subseteq \calD'\big(\RR^n;\Hom_\CC(\SS_n,\SS_{n-1})\big) \simeq \calD'\big(\RR^n\big)\otimes\Hom_\CC(\SS_n,\SS_{n-1}), \end{gather*}
where the sign $+$ represents $\delta+\varepsilon\equiv0(2)$ and the sign $-$ represents $\delta+\varepsilon\equiv1(2)$.

The translation principle applied to the distribution kernels $K_{\lambda,\nu}^{\AA,\pm}\in\calD'\big(\RR^n\big)_{\lambda,\nu}^\pm$ yields meromorphic families of distribution kernels $P\Kslash_{\lambda,\nu}^{\AA,\pm}\in\calD'\big(\RR^n;\Hom_\CC(\SS_n,\SS_{n-1})\big)_{\lambda,\nu}^\pm$ given by
\begin{gather*} P\Kslash_{\lambda,\nu}^{\AA,\pm}(x) = (P\zeta_n(x))\cdot K_{\lambda-\frac{1}{2},\nu+\frac{1}{2}}^{\AA,\mp}(x), \end{gather*}
where $\zeta_n(x)\in\End_\CC(\SS_n)$ is the value of the representation $\zeta_n$ of the Clifford algebra $\Cl(n;\CC)$ at the vector $x\in\RR^n\subseteq\Cl(n)\subseteq\Cl(n,\CC)$ and $0\neq P\in\Hom_{\Pin(n-1)}([\zeta_n\otimes\det]|_{\Pin(n-1)},\zeta_{n-1})$. (Note that the spin representation $\zeta_{n-1}$ of $\Pin(n-1)$ occurs in the restriction $[\zeta_n\otimes\det]|_{\Pin(n-1)}$ with multiplicity one, where $\det\colon \Pin(n)\to\upO(n)\to\{\pm1\}$ denotes the determinant character.)

To state our result on the classification of intertwining operators, let
\begin{gather*}
\Lslash_\even = \big\{\big({-}\rho-\tfrac{1}{2}-i,-\rho_H-\tfrac{1}{2}-j\big)\colon i,j\in\NN,i-j\in2\NN\big\},\\
\Lslash_\odd = \big\{\big({-}\rho-\tfrac{1}{2}-i,-\rho_H-\tfrac{1}{2}-j\big)\colon i,j\in\NN,i-j\in2\NN+1\big\}.
\end{gather*}

\begin{thmalph}[see Theorems~\ref{thm:SBOsSpinors+}, \ref{thm:SBOsSpinors-} and \ref{thm:ClassificationSBOsSpinors}]\label{thm:IntroD}\quad
\begin{enumerate}\itemsep=0pt
\item[$1.$] For $\delta+\varepsilon\equiv 0(2)$ we have
\begin{gather*} \dim\Hom_{\widetilde{H}}\big(\slashed{\pi}_{\lambda,\delta}|_{\widetilde{H}},\slashed{\tau}_{\nu,\varepsilon}\big) = \begin{cases}2,&\mbox{for $(\lambda,\nu)\in\Lslash_\even$,}\\ 1,&\mbox{for $(\lambda,\nu)\in\CC^2\setminus\Lslash_\even$,}\end{cases} \end{gather*}
and every intertwining operator is given by the distribution kernel $P\Kslash_{\lambda,\nu}^{\AA,+}$ or a regularization of it.
\item[$2.$] For $\delta+\varepsilon\equiv 1(2)$ we have
\begin{gather*} \dim\Hom_{\widetilde{H}}\big(\slashed{\pi}_{\lambda,\delta}|_{\widetilde{H}},\slashed{\tau}_{\nu,\varepsilon}\big) = \begin{cases}2,&\mbox{for $(\lambda,\nu)\in\Lslash_\odd$,}\\1, &\mbox{for $(\lambda,\nu)\in\CC^2\setminus\Lslash_\odd$,}\end{cases} \end{gather*}
and every intertwining operator is given by the distribution kernel $P\Kslash_{\lambda,\nu}^{\AA,-}$ or a regularization of it.
\end{enumerate}
\end{thmalph}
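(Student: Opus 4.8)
The plan is to parallel the orthogonal-group case (Theorem~\ref{thm:IntroC}) throughout, replacing the scalar kernels $K^{\AA,\pm}_{\lambda,\nu}$ by the spinor-valued kernels $P\Kslash^{\AA,\pm}_{\lambda,\nu}$ and keeping track of the extra Clifford-algebra factor $P\zeta_n(x)$. First I would establish, via the translation principle (Theorem~\ref{thm:IntroA} together with Proposition~\ref{prop:TranslationPrincipleDistributionKernel}), that $P\Kslash^{\AA,\pm}_{\lambda,\nu}$ genuinely lies in $\calD'(\RR^n;\Hom_\CC(\SS_n,\SS_{n-1}))^\pm_{\lambda,\nu}$: one takes the finite-dimensional $\widetilde G$-representation $E=\SS_{n+1}\otimes E_0$ (a spin representation of $\Pin(n+1,1)$, possibly twisted by a character), checks that its unique $\widetilde P$-irreducible subrepresentation restricts on $\widetilde M$ to $\zeta_n$ (up to the relevant sign/parameter shift) and that $E$ admits a $\widetilde P_H$-quotient isomorphic to $\zeta_{n-1}\otimes e^{\mu''}\otimes\1$. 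The section $\varphi$ from Theorem~\ref{thm:IntroA} is then computed on the Bruhat cell $\overline N\simeq\RR^n$ and seen to equal (a nonzero multiple of) $x\mapsto P\zeta_n(x)$, where $P$ is the unique-up-to-scalar intertwiner in $\Hom_{\Pin(n-1)}([\zeta_n\otimes\det]|_{\Pin(n-1)},\zeta_{n-1})$; here the multiplicity-one statement recorded in the excerpt is what guarantees uniqueness of $\varphi$ up to scalar and hence pins down the parameter shift $(\lambda,\nu)\mapsto(\lambda-\tfrac12,\nu+\tfrac12)$ and the flip $\pm\mapsto\mp$ in $K^{\AA,\mp}_{\lambda-\frac12,\nu+\frac12}$.

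The core analytic input is transport of the pole/residue structure of the scalar families $K^{\AA,\pm}_{\lambda,\nu}$ (known from Kobayashi--Speh and from Theorem~\ref{thm:SBOsOppositeSign}) through multiplication by the \emph{polynomial} $\Hom_\CC(\SS_n,\SS_{n-1})$-valued function $P\zeta_n(x)$. Since this factor is holomorphic in $(\lambda,\nu)$ (it does not depend on the parameters at all) and is a homogeneous degree-one polynomial in $x$, the meromorphic continuation, the location of the poles in $(\lambda,\nu)$, and the leading Laurent coefficients of $P\Kslash^{\AA,\pm}_{\lambda,\nu}$ are obtained from those of $K^{\AA,\mp}_{\lambda-\frac12,\nu+\frac12}$ by this multiplication — this yields Theorems~\ref{thm:SBOsSpinors+} and \ref{thm:SBOsSpinors-}, in particular that the residues are supported either on all of $\RR^n$ (yielding the families $\Lslash_{\even/\odd}$ of ``second'' solutions) or at the origin (yielding differential symmetry breaking operators). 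One must verify that multiplication by $P\zeta_n(x)$ does not accidentally kill a residue: because $\zeta_n$ is a representation of the Clifford algebra, $\zeta_n(x)$ is invertible for $x\neq0$ and $P\zeta_n(x)$ is surjective for $x\neq 0$ (as $P$ is a nonzero $\Pin(n-1)$-map into an irreducible), so no cancellation occurs away from $0$; at $0$ one argues that the relevant Taylor-coefficient pairing against $P\zeta_n(x)$ remains nonzero, using that the $\Pin(n-1)$-type $\SS_{n-1}$ really appears in the Taylor expansion. The parameter sets $\Lslash_{\even}$, $\Lslash_{\odd}$ then come out of the sets $L_{\even}$, $L_{\odd}$ of the scalar theory shifted by $(-\tfrac12,+\tfrac12)$ (absorbed into $-\rho-\tfrac12$, $-\rho_H-\tfrac12$), with the even/odd parities of $i-j$ swapped by the $\pm\mapsto\mp$ flip.

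Finally, for the \emph{classification} (Theorem~\ref{thm:ClassificationSBOsSpinors}) I would show the lower bound $\dim\Hom_{\widetilde H}\geq 1$ (resp.\ $\geq 2$ on $\Lslash_{\even/\odd}$) from the constructed kernels, and the matching upper bound by the $K$-type / eigenvalue method alluded to in the introduction: decompose $\slashed\pi_{\lambda,\delta}|_{\widetilde K}$ and $\slashed\tau_{\nu,\varepsilon}|_{\widetilde K_H}$ into $\widetilde K$- resp.\ $\widetilde K_H$-types (spinor-valued spherical harmonics), observe that an intertwining operator is determined by a scalar function on the set of $\widetilde K_H$-types occurring in both, derive a recurrence relation for these scalars from the action of a generator of $\frakn_H$ (equivalently, compute the eigenvalues of the Knapp--Stein-type operator $A(\slashed\pi_{\lambda,\delta})$ composed with restriction), and count the solution space of the recurrence. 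The subtlety — and what I expect to be the main obstacle — is the combinatorics of branching spinor representations from $\widetilde K=\Pin(n+1)$ to $\widetilde K_H=\Pin(n)$ together with the parity bookkeeping forced by the two inequivalent spin representations in odd dimension and by the $\sgn^\delta$, $\sgn^\varepsilon$ twists: one must check that the recurrence has exactly a one-dimensional solution space off $\Lslash_{\even/\odd}$ and exactly two-dimensional on it, with the second solution matching the residue found above. Granting the analogous scalar computation of Kobayashi--Speh (and of Theorem~\ref{thm:SBOsOppositeSignClassification}), this reduces to checking that the extra Clifford factor shifts every relevant $K$-type eigenvalue by an explicit elementary factor, so the zero-set of the recurrence is the scalar one shifted by $(-\tfrac12,+\tfrac12)$ — which is precisely $\Lslash_{\even/\odd}$.
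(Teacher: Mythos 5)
Your outline reproduces the paper's strategy quite faithfully for the construction and the residue analysis: the kernel $P\Kslash^{\AA,\pm}_{\lambda,\nu}$ is indeed produced by the translation principle with a spin representation of $\widetilde G$ (Corollary~\ref{cor:KnappSteinTranslationPrinciple} gives $\varphi(\overline n_x)=\zeta(x)$ after the identification of Lemma~\ref{lem:AMprojection}), and the pole/residue structure is transferred from $K^{\AA,\mp}_{\lambda-\frac12,\nu+\frac12}$ by multiplication with the degree-one polynomial $P\zeta_n(x)$. Two points where the proposal is imprecise there: the residues are supported on $\RR^n$, $\RR^{n-1}$, \emph{or} $\{0\}$ (the intermediate $\BB$-families supported on $\RR^{n-1}$ are missing from your account), and the non-vanishing of $P\circ(\text{residue})$ for $n$ even is not merely the irreducibility of the target but requires the linear-independence argument of Proposition~\ref{prop:ProjOntoIrrSummands}.

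The real gap is in your classification step. You assert that, granting the scalar classification, it ``reduces to checking that the extra Clifford factor shifts every relevant $K$-type eigenvalue by an explicit elementary factor, so the zero-set of the recurrence is the scalar one shifted by $(-\tfrac12,+\tfrac12)$.'' This is not what the paper does, and I don't believe it is a correct reduction. The spinor-valued $K$-type picture is structurally different from the scalar one: for $n$ even each degree contributes two inequivalent $\widetilde K_1$-types $\zeta^\pm_{n+1,i}$, each $\widetilde K_1$-type branches to several $\widetilde K_{H,1}$-types, and the recurrence involves the array $t_{(i,\pm),j}$ rather than a single $t_{i,j}$. The paper has to (i) reduce first to the index-two subgroup $\widetilde G_1$ to disentangle the $\delta,\varepsilon$-parities (Lemma~\ref{lem:ReductionToG1H1}); (ii) compute the proportionality constants $\lambda_{\alpha,\alpha'}^{\beta,\beta'}$ from scratch using explicit monogenic-polynomial realizations, the branching maps $I_{j\to i}$, and the Clifford-algebra bookkeeping $\phi\mapsto\phi^\vee$ (Lemmas~\ref{lem:BranchingMonogenics}, \ref{lem:CocycleVsEmbedding}, \ref{lem:ExplicitConstantsLambda}); and (iii) decouple the recurrence via the substitution $s^\pm_{i,j}=t_{(i,+),j}\pm(-1)^{i-j}t_{(i,-),j}$ and solve the resulting two systems, which involves checking dependence/independence of pairs of relations near the critical corner of the $(i,j)$-grid (Theorem~\ref{thm:MultiplicitiesForG1H1}). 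None of this is obtained by an eigenvalue shift of the scalar computation; the fact that the answer lands on $\Lslash_{\even/\odd}$, a shifted copy of $L_{\odd/\even}$, is a feature of the result, not a shortcut to proving it. You should either supply the genuine Clifford-analysis recurrence computation or accept that the classification step is not a corollary of the scalar case.
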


In Theorems~\ref{thm:SBOsSpinors+} and \ref{thm:SBOsSpinors-} we determine all poles and residues of the meromorphic families $P\Kslash_{\lambda,\nu}^{\AA,\pm}$, and hence give an explicit description of the distribution kernels of all intertwining operators between spinor-valued principal series representations.

We remark that for $\lambda-\nu=-\frac{1}{2}-2\ell$, resp.~$\lambda-\nu=-\frac{3}{2}-2\ell$, $\ell\in\NN$, there exists a residue of~$P\Kslash_{\lambda,\nu}^{\AA,+}$, resp.~$P\Kslash_{\lambda,\nu}^{\AA,-}$, which is supported at the origin in $\RR^n$, inducing a differential intertwi\-ning operator. These families of spinor-valued differential intertwining operators were previously obtained by Kobayashi--{\O}rsted--Somberg--Sou{\v{c}}ek~\cite{KOSS15}, and it was conjectured that these are all differential intertwining operators in this setting. Our classification confirms this conjecture (see Remark~\ref{rem:JuhlOperatorsSpinors}).

In contrast to the proof of Theorem~\ref{thm:IntroC} which only uses the translation principle, we employ the method developed in \cite{MO14a} for the proof of Theorem~\ref{thm:IntroD}. This method describes intertwining operators between the underlying Harish-Chandra modules of principal series representations in terms of their action on the different $K$-types. The explicit knowledge of the action on $K$-types also allows us to determine the dimensions of intertwining operators between the irreducible constituents of $\slashed{\pi}_{\lambda,\delta}$ and $\slashed{\tau}_{\nu,\varepsilon}$ at reducibility points.

The representation $\slashed{\pi}_{\lambda,\delta}$ is reducible if and only if $\lambda=\pm\big(\rho+\frac{1}{2}+i\big)$, $i\in\NN$. More precisely, for $\lambda=-\rho-\frac{1}{2}-i$ the representation $\slashed{\pi}_{\lambda,\delta}$ has a finite-dimensional irreducible subrepresentation~$\calF_\delta(i)$ and the quotient $\calT_\delta(i)=\slashed{\pi}_{\lambda,\delta}/\calF_\delta(i)$ is irreducible. The composition factors at $\lambda=\rho+\frac{1}{2}+i$ can be described in terms of $\calF_\delta(i)$ and $\calT_\delta(i)$ by tensoring with the determinant character (see Lemma~\ref{lem:SpinReducibility} for the precise statement). We use the analogous notation for the composition factors~$\calF'_\varepsilon(j)$ and $\calT'_\varepsilon(j)$ of~$\slashed{\tau}_{\nu,\varepsilon}$ at $\nu=-\rho_H-\frac{1}{2}-j$, $j\in\NN$.

\begin{thmalph}[see Theorem~\ref{thm:MultiplicitiesCompositionFactors}]\label{thm:IntroE}
For $\pi\in\{\calT_\delta(i),\calF_\delta(i)\}$ and $\tau\in\{\calT'_\varepsilon(j),\calF'_\varepsilon(j)\}$ the multiplicities $\dim\Hom_{\widetilde{H}}(\pi|_{\widetilde{H}},\tau)$ are given by
\begin{center}
\begin{tabular}{c|cc}
\diagonal{.2em}{.83cm}{$\pi$}{$\tau$} & $\calF'_\varepsilon(j)$ & $\calT'_\varepsilon(j)$ \\
\hline
$\calF_\delta(i)$ & $1$ & $0$\\
$\calT_\delta(i)$ & $0$ & $1$\\
\multicolumn{3}{c}{for $0\leq j\leq i$,}\\
\multicolumn{3}{c}{$i+j\equiv\delta+\varepsilon(2)$,}
\end{tabular}
\qquad
\begin{tabular}{c|cc}
\diagonal{.2em}{.83cm}{$\pi$}{$\tau$} & $\calF'_\varepsilon(j)$ & $\calT'_\varepsilon(j)$ \\
\hline
$\calF_\delta(i)$ & $0$ & $0$\\
$\calT_\delta(i)$ & $1$ & $0$\\
\multicolumn{3}{c}{otherwise.}\\
\multicolumn{3}{c}{\phantom{otherwise.}}
\end{tabular}
\end{center}
\end{thmalph}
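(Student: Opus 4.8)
The plan is to deduce Theorem~\ref{thm:IntroE} from Theorem~\ref{thm:IntroD} together with the reducibility structure of $\slashed{\pi}_{\lambda,\delta}$ and $\slashed{\tau}_{\nu,\varepsilon}$ (Lemma~\ref{lem:SpinReducibility}) and the explicit action on $K$-types obtained by the method of \cite{MO14a}. First I would fix $\lambda=-\rho-\tfrac12-i$ and $\nu=-\rho_H-\tfrac12-j$, so that $\slashed{\pi}_{\lambda,\delta}$ has the short exact sequence $0\to\calF_\delta(i)\to\slashed{\pi}_{\lambda,\delta}\to\calT_\delta(i)\to0$ and likewise $0\to\calF'_\varepsilon(j)\to\slashed{\tau}_{\nu,\varepsilon}\to\calT'_\varepsilon(j)\to0$. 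Applying the left-exact functor $\Hom_{\widetilde H}(-,\tau)$ and the functor $\Hom_{\widetilde H}(\pi,-)$ to these sequences expresses the nine multiplicities in the two tables in terms of the known total $\dim\Hom_{\widetilde H}(\slashed{\pi}_{\lambda,\delta}|_{\widetilde H},\slashed{\tau}_{\nu,\varepsilon})$ from Theorem~\ref{thm:IntroD}, the connecting maps, and a handful of vanishing statements.

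The key steps, in order, are: (i) record from Theorem~\ref{thm:IntroD} that the total multiplicity is $2$ precisely when $(\lambda,\nu)\in\Lslash_\even$ (if $\delta+\varepsilon\equiv0$) or $\Lslash_\odd$ (if $\delta+\varepsilon\equiv1$), i.e.\ exactly when $0\le j\le i$ and $i+j\equiv\delta+\varepsilon\ (2)$, and is $1$ otherwise; (ii) identify which of the two independent kernels $P\Kslash^{\AA,\pm}_{\lambda,\nu}$ and its regularization factor through which composition factor, by reading off from Theorems~\ref{thm:SBOsSpinors+}/\ref{thm:SBOsSpinors-} the support and the image/kernel of the corresponding operators on $K$-types — the ``generic'' operator is nonzero on the finite-dimensional $K$-type generating $\calF_\delta(i)$ hence does not kill $\calF_\delta(i)$, whereas at the pole the residual (differential) operator is supported at the origin and therefore annihilates $\calF_\delta(i)$ when restricted appropriately, giving an operator that factors through $\calT_\delta(i)$ and lands in $\calF'_\varepsilon(j)$; (iii) use these facts to fill the tables: the ``generic'' operator restricts to a nonzero element of $\Hom_{\widetilde H}(\calF_\delta(i)|_{\widetilde H},\calF'_\varepsilon(j))$ in the range $0\le j\le i$, $i+j\equiv\delta+\varepsilon$, giving the $1$ in the upper-left of the first table, while on $\calT_\delta(i)$ it vanishes there; dually the residual operator gives the $1$ in the $(\calT_\delta(i),\calT'_\varepsilon(j))$ slot; (iv) establish the vanishing entries: $\Hom_{\widetilde H}(\calF_\delta(i)|_{\widetilde H},\calT'_\varepsilon(j))=0$ and $\Hom_{\widetilde H}(\calT_\delta(i)|_{\widetilde H},\calF'_\varepsilon(j))$ in the first table from a $K$-type count (the finite-dimensional $\calF_\delta(i)$ cannot map to the infinite-dimensional $\calT'_\varepsilon(j)$ nontrivially because its $K$-types are ``too small''), and symmetrically in the ``otherwise'' table argue that the unique operator factors as $\slashed\pi_{\lambda,\delta}\twoheadrightarrow\calT_\delta(i)\to\calF'_\varepsilon(j)\hookrightarrow\slashed\tau_{\nu,\varepsilon}$, forcing the single $1$ into the $(\calT_\delta(i),\calF'_\varepsilon(j))$ slot and all other entries to $0$; (v) finally check consistency: for each of $\pi\in\{\calF_\delta(i),\calT_\delta(i)\}$ and the full $\slashed\tau_{\nu,\varepsilon}$, and symmetrically, the long exact sequence of $\Hom$ must reproduce the total dimensions $1$ or $2$ from Theorem~\ref{thm:IntroD}, which pins down the remaining ambiguities and in particular shows the connecting homomorphism is zero in the relevant degrees.

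I expect the main obstacle to be step (iv), the \emph{vanishing} of the off-diagonal multiplicities, and more precisely distinguishing the two ``$1$'' cases in the two tables from the ``$2$'' and ``$0$'' cases. The count of total multiplicities from Theorem~\ref{thm:IntroD} is not by itself enough: when the total is $1$ one still has to decide through which composition factor of source and target the unique operator factors, and when the total is $2$ one must show the two operators ``line up'' along the filtration (one landing in $\calF'$, one defined on $\calT$) rather than, say, both being nonzero on $\calF_\delta(i)$. This is exactly where the explicit $K$-type description from \cite{MO14a}, which underlies Theorem~\ref{thm:IntroD}, is indispensable: one must locate the minimal $K$-type of each composition factor $\calF_\delta(i)$, $\calT_\delta(i)$, $\calF'_\varepsilon(j)$, $\calT'_\varepsilon(j)$ and evaluate the eigenvalue formulas for the operators $P\Kslash^{\AA,\pm}_{\lambda,\nu}$ and their regularizations on precisely those $K$-types, checking that they are nonzero exactly on the factor that the table claims. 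Once the $K$-type data is assembled the bookkeeping is routine; organizing it cleanly and invoking the parity constraint $i+j\equiv\delta+\varepsilon\ (2)$ at the right moments is the only delicate point, and it is handled by the case division already built into the statement.
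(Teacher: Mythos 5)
Your overall strategy — reduce the multiplicity problem to the parity/parameter dichotomy of Theorem~\ref{thm:IntroD} plus $K$-type bookkeeping in the style of \cite{MO14a} — is the right instinct and matches the paper's in spirit. However, there is a real gap in how you plan to control the $\Hom$ spaces whose source is a submodule or whose target is a quotient. By fixing $\nu=-\rho_H-\tfrac12-j$ you only ever realize $\calT'_\varepsilon(j)$ as a \emph{quotient} of $\slashed{\tau}_{\nu,\varepsilon}$, and $\Hom_{\widetilde H}(\pi,-)$ is only left exact: the natural map $\Hom(\pi,\slashed{\tau}_{\nu,\varepsilon})\to\Hom(\pi,\calT'_\varepsilon(j))$ need not be surjective. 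Consequently $\dim\Hom(\pi,\calT'_\varepsilon(j))$ is not determined by your two short exact sequences together with the total dimension from Theorem~\ref{thm:IntroD}; the ``consistency check'' in your step (v) cannot rule out excess elements in the cokernel of that restriction map, because that is precisely what you are trying to bound. The same difficulty, in dual form, affects $\Hom(\calF_\delta(i),\tau)$: $\calF_\delta(i)$ is only a \emph{sub} of $\slashed{\pi}_{-\rho-1/2-i,\delta}$, and $\Hom(\slashed{\pi}_{\lambda,\delta},\tau)\to\Hom(\calF_\delta(i),\tau)$ likewise need not be surjective. Your step (iv) correctly disposes of the entries involving both a finite-dimensional source and the infinite-dimensional irreducible $\calT'_\varepsilon(j)$ by a size argument, but it does not give an upper bound for $\Hom(\calT_\delta(i),\calT'_\varepsilon(j))$ or $\Hom(\calF_\delta(i),\calF'_\varepsilon(j))$ from the data you allow yourself.

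The paper resolves exactly this by using Lemma~\ref{lem:SpinReducibility}~(3) as a reflection trick: at $\nu=\rho_H+\tfrac12+j$ the factor $\calT'_\varepsilon(j)$ becomes, after twisting by $\det$, the irreducible \emph{subrepresentation} of $\slashed{\tau}_{\nu,1-\varepsilon}\otimes\det$, and dually at $\lambda=\rho+\tfrac12+i$ the factor $\calF_\delta(i)$ becomes a \emph{quotient}. After this repositioning, each $\Hom(\pi,\tau)$ for $\pi$ a quotient and $\tau$ a sub is identified with the subspace of the one-dimensional $\Hom(\slashed{\pi}_{\lambda',\delta'},\slashed{\tau}_{\nu',\varepsilon'})$ (one-dimensional by Theorem~\ref{thm:ClassificationSBOsSpinors}, since one of $\lambda',\nu'$ is positive so $(\lambda',\nu')\notin\Lslash$) cut out by the vanishing conditions $s_{k,\ell}=0$ for $k$ in the $K$-type range of $\calF$ or $\ell$ in the $K_H$-type range of $\calF'$, and the scalar analysis from the proof of Theorem~\ref{thm:MultiplicitiesForG1H1} reads off exactly when the unique sequence $(s_{k,\ell})$ satisfies those conditions. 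Your plan of evaluating the explicit kernels $\widetilde{\Kslash}^{\AA,\pm}_{\lambda,\nu}$, $\widetilde{\Kslash}^{\CC,\pm}_{\lambda,\nu}$ on minimal $K$-types certainly produces the operators giving the claimed lower bounds, but without the reflection trick (or an intrinsic computation of the $\frakh$-module structure of the composition factors, which you do not spell out) the upper bounds for the quotient targets are not established.
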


We remark that the multiplicities are the same as in the case of spherical principal series (see \cite[Theorem 1.2]{KS15}).

\subsection{Relation to conformal geometry}

Let $(X,g)$ be a connected oriented Riemannian manifold of dimension $n$ with a spin structure. Let $G$ denote the conformal group of $X$, i.e., the group of diffeomorphisms $h\colon X\to X$ such that there exists a conformal factor $\Omega(h,\cdot)\in C^\infty(X)$ with $(h^*g)_{gx}=\Omega(h,x)^2g_x$ for all $h\in G$ and $x\in X$. Write ${\rm or}\colon G\to\{\pm1\}$ for the character of $G$ which takes the value $+1$ on orientation preserving diffeomorphisms and $-1$ on orientation reversing diffeomorphisms. Let $\Sigma X\to X$ denote the spin bundle and $C^\infty(\Sigma X)$ its smooth sections, also called spinors. For a spinor $\sigma\in C^\infty(\Sigma X)$ and a diffeomorphism $h\in G$ the pullback $h^*\sigma$ can in general not be defined unambiguously, but there exists a double covering $\widetilde{G}\to G$ and a smooth action of $\widetilde{G}$ on $C^\infty(\Sigma X)$ which resolves this ambiguity. Abusing notation, we lift the orientation character ${\rm or}$ and the conformal factor $\Omega$ to the double cover $\widetilde{G}$. This gives rise to a family $\varpi_{\lambda,\delta}$ of representations of~$\widetilde{G}$ on $C^\infty(\Sigma X)$ depending on two parameters $\lambda\in\CC$ and $\delta\in\ZZ/2\ZZ$ given by
\begin{gather*} \slashed{\pi}_{\lambda,\delta}(h)\sigma(x) = {\rm or}(h)^\delta\Omega\big(h^{-1},x\big)^{\lambda+\frac{n}{2}}(h^*\sigma)(x), \qquad x\in X,h\in G,\sigma\in C^\infty(\Sigma X). \end{gather*}
For $X=S^n$ with the Euclidean metric the group $\widetilde{G}$ is essentially ${\rm Pin}(n+1,1)$ and the definition of $\slashed{\pi}_{\lambda,\delta}$ agrees with previous definition since $G/P\simeq S^n$.

For an oriented submanifold $Y\subseteq X$ we may consider the group $H=\{h\in G\colon hY=Y\}$ which acts conformally on $(Y,g|_Y)$. Fixing a spin structure on $Y$ we denote by $C^\infty(\Sigma Y)$ the space of spinors and by $\slashed{\tau}_{\nu,\varepsilon}$ ($\nu\in\CC$, $\varepsilon\in\ZZ/2\ZZ$) the corresponding representations of~$H$ on~$C^\infty(\Sigma Y)$.

In this context it is natural to ask for a construction and classification of (differential) ope\-rators $A\colon C^\infty(\Sigma X)\to C^\infty(\Sigma Y)$ such that $A\circ\slashed{\pi}_{\lambda,\delta}(h)=\slashed{\tau}_{\nu,\varepsilon}(h)\circ A$ for all $h\in H$. The analogous question for differential forms was previously discussed by Kobayashi--Kubo--Pevzner~\cite{KKP16} and Kobayashi--Speh~\cite{KS18}. In the model case $(X,Y)=\big(S^n,S^{n-1}\big)$ a complete classification for differential forms was obtained in \cite{FJS16,KKP16,KS18}, and our results in Theorem~\ref{thm:IntroD} can be viewed as the analogous classification for spinors. We also refer to~\cite{CO14} for the case $X=Y=S^n$.

\subsection{Structure of the paper}

In Section~\ref{sec:Preliminaries} we fix the notation for (generalized) principal series representations of real reductive groups, explain how to describe intertwining operators between them in terms of invariant distributions, and recall the construction of Knapp--Stein type intertwining operators for symmetric pairs from our joint work with Y.~Oshima~\cite{MOO16}. Section~\ref{sec:TranslationPrinciple} explains the idea of the translation principle in detail and contains the proofs of Theorem~\ref{thm:IntroA} and Corollary~\ref{cor:IntroB}. We then apply the translation principle in two different situations. Firstly, in Section~\ref{sec:ExScalar} we construct and classify intertwining operators between principal series representations of $(G,H)=(\upO(n+1,1),\upO(n,1))$ induced from one-dimensional representations (see Theorems~\ref{thm:SBOsOppositeSign} and \ref{thm:SBOsOppositeSignClassification} for a detailed version of Theorem~\ref{thm:IntroC}). Secondly, in Section~\ref{sec:ExSpinors} we construct intertwining operators between principal series representations of $\big(\widetilde{G},\widetilde{H}\big)=(\Pin(n+1,1),\Pin(n,1))$ induced from spin-representations (see Theorems~\ref{thm:SBOsSpinors+} and~\ref{thm:SBOsSpinors-}). To also obtain a classification in the second situation, we employ in Section~\ref{sec:CompactPictureSBOsSpinors} the method developed in~\cite{MO14a}, yielding the classification results in Theorems~\ref{thm:ClassificationSBOsSpinors} and~\ref{thm:MultiplicitiesCompositionFactors}. Together with Theorems~\ref{thm:SBOsSpinors+} and~\ref{thm:SBOsSpinors-} this proves Theorems~\ref{thm:IntroD} and \ref{thm:IntroE}. Finally, Appendix~\ref{app:CliffordSpin} contains some elementary material about Clifford algebras and spin representations needed in Sections~\ref{sec:ExSpinors} and \ref{sec:CompactPictureSBOsSpinors}.

{\bf Notation.} $\NN=\{0,1,2,\ldots\}$, $(\lambda)_n=\lambda(\lambda+1)\cdots(\lambda+n-1)$, $A\setminus B=\{a\in A\colon a\notin B\}$.

\vspace{-1.5mm}

\section{Preliminaries}\label{sec:Preliminaries}

\vspace{-1mm}

We recall the basic facts about (generalized) principal series representations of real reductive groups, symmetry breaking operators, and their construction for symmetric pairs. More details can be found in~\cite{KS15,MOO16}.

\subsection{Generalized principal series representations}\label{sec:PrincipalSeries}

Let $G$ be a real reductive group and $P\subseteq G$ a parabolic subgroup with Langlands decomposition $P=MAN$. We write $\frakg$, $\frakm$, $\fraka$ and $\frakn$ for the Lie algebras of $G$, $M$, $A$ and $N$. Then $A=\exp(\fraka)$ and $N=\exp(\frakn)$. Let $\overline{\frakn}$ denote the nilradical opposite to $\frakn$ and $\overline{N}=\exp(\overline{\frakn})$ the corresponding closed connected subgroup of $G$. Then $\overline{P}=MA\overline{N}\subseteq G$ is the parabolic subgroup opposite to~$P$.

The multiplication map $\overline{N}\times M\times A\times N\to G$ is a diffeomorphism onto an open dense subset of $G$, and for $g\in\overline{N}MAN$ we define $\overline{n}(g)\in\overline{N}$, $m(g)\in M$, $a(g)\in A$ and $n(g)\in N$ by $g=\overline{n}(g)m(g)a(g)n(g)$.

We consider representations of $G$ which are parabolically induced from finite-dimensional representations of $P$. Let $(\xi, V)$ be a finite-dimensional representation of $M$, $\lambda\in \fraka_{\CC}^*$ and denote by $\1$ the trivial representation of $N$, then $\xi\otimes e^\lambda\otimes\1$ is a finite-dimensional representation of $P=MAN$. Write $\rho=\frac{1}{2}\tr\ad_\frakn\in\fraka^*$ for half the sum of all positive roots and let $V_\lambda=\xi\otimes e^{\lambda+\rho}\otimes\1$. We define the generalized principal series representation $\Ind^G_P\big(\xi\otimes e^{\lambda}\otimes \1\big)$ as the left-regular representation on the space
\begin{gather*} C^\infty(G,V_\lambda)^P = \big\{f\in C^\infty(G,V)\colon f(gman) = a^{-\lambda-\rho}\xi(m)^{-1}f(g)\,\forall\,g\in G,\, man\in MAN\big\}. \end{gather*}
If we write
\begin{gather*} \calV_\lambda = G\times_P V_\lambda\to G/P \end{gather*}
for the homogeneous vector bundle associated to the representation $V_\lambda$ of $P$, then $C^\infty(G,V_\lambda)^P$ can be identified with the space $C^\infty(G/P,\calV_\lambda)$ of smooth sections of~$\calV_\lambda$.

\subsection{Distribution sections of vector bundles}\label{sec:DistributionSections}

Let $V_\lambda^*=\xi^\vee\otimes e^{-\lambda+\rho}\otimes\1$, where $\xi^\vee$ is the contragredient representation of $\xi$, and write $\calV_\lambda^*=G\times_P V_\lambda^*$ for the dual bundle of $\calV_\lambda$. We define the space $\calD'(G/P,\calV_\lambda)$ of distribution sections of the bundle $\calV_\lambda$ as the (topological) dual of $C^\infty(G/P,\calV_\lambda^*)$:
\begin{gather*} \calD'(G/P,\calV_\lambda) = C^\infty(G/P,\calV_\lambda^*)'. \end{gather*}
Note that since $G/P$ is compact, smooth sections on $G/P$ are automatically compactly supported. Then $C^\infty(G/P,\calV_\lambda)\simeq C^\infty(G,V_\lambda)^P$ embeds $G$-equivariantly into $\calD'(G/P,\calV_\lambda)$ by \smash{$f\mapsto T_f$}, where
\begin{gather*}
\langle T_f,\varphi\rangle = \int_K \langle f(k),\varphi(k)\rangle\,{\rm d}k \qquad \forall\varphi\in C^\infty(G,V_\lambda^*)^P,
\end{gather*}
and ${\rm d}k$ denotes the normalized Haar measure on $K$.

Let $(\tau,E)$ be another finite-dimensional representation of $P$ and $\calE=G\times_P E$ the corresponding vector bundle over $G/P$. Then every smooth section $f\in C^\infty(G/P,\calE)$ defines a continuous linear multiplication operator
\begin{gather*} \calD'(G/P,\calV_\lambda)\to\calD'(G/P,\calE\otimes\calV_\lambda), \qquad u\mapsto f\otimes u, \end{gather*}
which is dual to the composition
\begin{gather*} C^\infty(G,E^\vee\otimes V_\lambda^*)^P \stackrel{f\otimes}{\to} C^\infty(G,E\otimes E^\vee\otimes V_\lambda^*)^P \stackrel{c_*}{\to} C^\infty(G,V_\lambda^*)^P \end{gather*}
of the pointwise tensor product $f\otimes$ and the push-forward by the contraction map $c\colon E\otimes E^\vee\otimes V_\lambda^*\to V_\lambda^*$.

\subsection{Symmetry breaking operators}\label{sec:SBOs}

Now let $H\subseteq G$ be a reductive subgroup of $G$ and $P_H=M_HA_HN_H\subseteq H$ a parabolic subgroup of $H$. Similarly, we define $\Ind^H_{P_H}(\eta \otimes e^{\nu}\otimes \1)$ for a finite-dimensional representation $(\eta,W)$ of $M_H$ and $\nu\in\fraka_{H,\CC}^*$ as the left-regular representation of $H$ on $C^\infty(H,W_\nu)^{P_H}$, where $W_\nu=\eta\otimes e^{\nu+\rho_H}\otimes\1$, $\rho_H=\frac{1}{2}\tr\ad_{\frakn_H}$. Then $C^\infty(H,W_\nu)^{P_H}$ identifies with the space $C^\infty(H/P_H,\calW_\nu)$ of smooth sections of the vector bundle $\calW_\nu=H\times_{P_H}W_\nu\to H/P_H$.

In this paper we study continuous $H$-intertwining operators
\begin{gather*} \Ind^G_P\big(\xi\otimes e^\lambda\otimes \1\big)\to\Ind^H_{P_H}\big(\eta \otimes e^\nu\otimes \1\big). \end{gather*}
By the Schwartz kernel theorem such maps are identified with $H$-invariant distribution sections of some vector bundle over $G/P\times H/P_H$. Using the isomorphism $\Delta(H)\backslash(G\times H)\simeq G$ which is induced by $G\times H\to G$, $(g,h)\mapsto g^{-1}h$, invariant distributions on $G/P\times H/P_H$ reduce to invariant distributions on $G/P$ (see \cite[Section~3.2]{KS15}):

\begin{Proposition}[{\cite[Proposition 3.2]{KS15}}]\label{prop:DistributionKernel}
We have natural isomorphisms of vector spaces
\begin{align*}
\Hom_H\big(\Ind^G_P\big(\xi\otimes e^{\lambda}\otimes \1\big),\Ind^H_{P_H}\big(\eta\otimes e^{\nu}\otimes \1\big)\big) &\simeq \calD'(G/P\times H/P_H,\calV_\lambda^*\otimes\calW_\nu)^{\Delta(H)}\\
&\simeq (\calD'(G/P,\calV_\lambda^*)\otimes W_\nu)^{\Delta(P_H)}.
\end{align*}
\end{Proposition}

Under the isomorphism an $H$-intertwining operator $A\colon \Ind^G_P\big(\xi\otimes e^{\lambda}\otimes\1\big)\to\Ind^H_{P_H}\big(\eta\otimes e^{\nu}\otimes\1\big)$ maps to the distribution kernel $K^A\in(\calD'(G/P,\calV_\lambda^*)\otimes W_\nu)^{\Delta(P_H)}$ such that
\begin{gather*} Af(h) = \int_{G/P} c\big(f(x)\otimes K^A\big(h^{-1}x\big)\big)\,{\rm d}x, \qquad f\in C^\infty(G/P,\calV_\lambda), \end{gather*}
where $c\colon V_\lambda\otimes V_\lambda^*\otimes W_\nu\to W_\nu$ is the contraction map, and the integral has to be understood in the distribution sense.

Sometimes it is convenient to work on the open dense Bruhat cell in $G/P$, because it is isomorphic to the vector space $\overline{\frakn}$. Under certain conditions on the parabolic subgroups~$P$ and~$P_H$ the restriction of the invariant distribution sections in Proposition~\ref{prop:DistributionKernel} to the open dense Bruhat cell is injective:

\begin{Theorem}[{\cite[Theorem 3.16]{KS15}}]\label{thm:DistributionKernelOnNbar}
Assume that
\begin{gather*} P_H=P\cap H, \qquad M_H=M\cap H, \qquad A_H=A\cap H, \qquad N_H=N\cap H. \end{gather*}
If additionally $G=P_H\overline{N}P$ then the restriction to $\overline{\frakn}\simeq\overline{N}\hookrightarrow G/P$ defines a linear isomorphism
\begin{gather*} (\calD'(G/P,\calV_\lambda^*)\otimes W_\nu)^{\Delta(P_H)} \to \calD'(\overline{\frakn},V_\lambda^*\otimes W_\nu)^{M_HA_H,\frakn_H}. \end{gather*}
\end{Theorem}

Here the action of $M_HA_H$ on $\calD'(\overline{\frakn},V_\lambda^*\otimes W_\nu)$ is the obvious action induced by the actions of $M_HA_H$ on $\overline{\frakn}$, $V_\lambda^*$ and $W_\nu$, and the action of $\frakn_H$ is induced by the infinitesimal action on $\overline{\frakn}$ viewed as subset of the generalized flag variety $G/P$.

\subsection{Symmetric pairs and Knapp--Stein type intertwining operators}\label{sec:KStypeIntertwiningOperators}

For symmetric pairs $(G,H)$ we provided in \cite{MOO16} explicit expressions of invariant distributions defining symmetry breaking operators, which we briefly recall.

Let $\sigma$ be an involution of $G$ and let $H$ be an open subgroup of $G^\sigma$, the fixed points of $\sigma$. Then $(G,H)$ forms a symmetric pair. We make the following two additional assumptions:{\samepage
\begin{align}
& \mbox{$P$ and $\overline{P}$ are conjugate via the Weyl group}\tag{G}\label{ass:G}\\
& \mbox{$P$ is $\sigma$-stable}.\tag{H}\label{ass:H}
\end{align}}

\noindent
Then \eqref{ass:G} implies $P=\tilde{w}_0\overline{P}\tilde{w}_0^{-1}$, where $\tilde{w}_0$ is a representative of the longest Weyl group element~$w_0$. Further, \eqref{ass:H} implies that $P_H=P\cap H$ is a parabolic subgroup of $H$.

Recall the $a$-projection $g\mapsto a(g)\in A$ from Section~\ref{sec:PrincipalSeries} which is defined on the open dense subset $\overline{N}MAN\subseteq G$. For $\alpha,\beta\in\fraka_\CC^*$ sufficiently positive we define
\begin{gather*} K_{\alpha,\beta}(g) = a\big(\tilde{w}_0^{-1}g^{-1}\big)^\alpha a\big(\tilde{w}_0^{-1}g^{-1}\sigma(g)\big)^\beta, \qquad g\in G. \end{gather*}
Since the second factor might not be defined for any $g\in G$, we make the additional assumption
\begin{gather}
\mbox{The domain of definition for $K_{\alpha,\beta}$ is non-empty.}\tag{D}\label{ass:D}
\end{gather}
In \cite[Proposition 2.5]{MOO16} we showed that in this case the domain of definition for $K_{\alpha,\beta}$ is already open and dense in $G$, and gave a criterion to check this.

In \cite{MOO16} we studied the meromorphic continuation of $K_{\alpha,\beta}$ in the parameters $\alpha,\beta\in\fraka_\CC^*$, and proved that they give rise to intertwining operators between spherical principal series:

\begin{Theorem}[{\cite[Theorems 3.1 and 3.3]{MOO16}}]\label{thm:KnappSteinKernels}
Under the assumptions \eqref{ass:G}, \eqref{ass:H} and \eqref{ass:D}, the functions $K_{\alpha,\beta}$ extend to a meromorphic family of distributions
\begin{gather*} K_{\alpha,\beta}\in(\calD'(G/P,\calV_\lambda^*)\otimes W_\nu)^{\Delta(P_H)}, \end{gather*}
where
\begin{gather}
\lambda = -w_0\alpha+\sigma\beta-w_0\beta+\rho, \qquad \nu = -\alpha|_{\fraka_{H,\CC}}-\rho_H.\label{eq:LambdaNuAlphaBeta}
\end{gather}
Therefore, they give rise to a meromorphic family of intertwining operators
\begin{gather*} A(\alpha,\beta)\colon \ \Ind_P^G\big(\1\otimes e^\lambda\otimes\1\big) \to \Ind_{P_H}^H\big(\1\otimes e^\nu\otimes\1\big). \end{gather*}
\end{Theorem}

\section{The translation principle}\label{sec:TranslationPrinciple}

We describe a technique, called the \textit{translation principle}, which allows to obtain new symmetry breaking operators from existing ones by tensoring with finite-dimensional representations of $G$.

\subsection{General technique}

Fix a principal series representation $\Ind_P^G\big(\xi\otimes e^\lambda\otimes\1\big)$ of $G$ and let $(\tau,E)$ be a finite-dimensional representation of~$G$, then there is a natural $G$-equivariant isomorphism
\begin{gather}
\iota_\tau\colon \ \Ind^G_P\big(\big(\xi\otimes e^{\lambda}\otimes \1\big) \otimes E|_P\big) \stackrel{\sim}{\to} \Ind^G_P\big(\xi\otimes e^{\lambda}\otimes \1\big)\otimes E.\label{eq:Map2}
\end{gather}
When we view both sides as $(V\otimes E)$-valued functions on $G$, this isomorphism is given by
\begin{gather*} (\iota_\tau f)(g)=(\id\otimes \tau(g))f(g), \qquad g\in G. \end{gather*}
Now, for any $P$-stable subspace $E'\subset E$ we have a natural injective map
\begin{align*}
\Ind^G_P\big(\big(\xi\otimes e^{\lambda}\otimes \1\big) \otimes E'\big) \hookrightarrow \Ind^G_P\big(\big(\xi\otimes e^{\lambda}\otimes \1\big) \otimes E|_P\big).
\end{align*}
Suppose that $N$ acts trivially on $E'$ and $A$ acts by a fixed character $e^{\mu'}$, $\mu\in\fraka^*$, then the $P$-action on $E'$ can be written as $\tau'\otimes e^{\mu'}\otimes \1$. The above map becomes
\begin{gather}
\Ind^G_P\big((\xi\otimes \tau')\otimes e^{\lambda+\mu'}\otimes \1\big) \hookrightarrow \Ind^G_P\big(\big(\xi\otimes e^{\lambda}\otimes \1\big) \otimes E|_P\big).\label{eq:Map1}
\end{gather}

Assuming irreducibility of $\tau$ and $\tau'$, there is essentially only one choice of such a $P$-stable subspace $E'\subseteq E$:

\begin{Lemma}\label{lem:UniquePstableSubspace}
For every irreducible finite-dimensional representation $(\tau,E)$ of $G$ the restric\-tion~$\tau|_P$ contains a unique irreducible subrepresentation~$E'$. Moreover, $E'$ is generated by the highest weight space of $E$, and $P=MAN$ acts on $E'$ by $\tau'\otimes e^{\mu'}\otimes\1$, where $\tau'$ is an irreducible representation of~$M$, $\mu'\in\fraka^*$ and $\1$ the trivial representation of $N$.
\end{Lemma}

\begin{proof}It suffices to treat the case of $G$ connected since $M$ meets all connected components of $G$. We first fix some notation. Let $\frakt\subseteq\frakm$ be a Cartan subalgebra, then $\frakc=\frakt\oplus\fraka$ is a Cartan subalgebra of $\frakg$ and $\frakc_\CC$ is a Cartan subalgebra of $\frakg_\CC$. Choose any system of positive roots $\Sigma^+(\frakg_\CC,\frakc_\CC)\subseteq\Sigma(\frakg_\CC,\frakc_\CC)$ such that the non-zero restrictions of positive roots to $\fraka$ are the roots of $\frakn$. Then the non-zero restrictions of positive roots in $\frakm$ to $\frakt_\CC$ form a positive system of roots $\Sigma^+(\frakm_\CC,\frakt_\CC)\subseteq\Sigma(\frakm_\CC,\frakt_\CC)$. We consider highest weights with respect to these positive systems.\\
Now let $E'\subseteq E$ be any irreducible subrepresentation for $\tau|_P$. Since $M$ is reductive, $E'$ decomposes into the direct sum $E'=E_1'\oplus\cdots\oplus E_m'$ of irreducible $M$-representations $E_i'$ of highest weight $\lambda_i\in\frakt_\CC^*$. Since $M$ and $A$ commute, $A$ acts by a character $\mu_i\in\fraka^*$ on $E_i'$. Now, let $1\leq i\leq m$ such that $\lambda_i+\mu_i$ is maximal among the $\lambda_j+\mu_j$, $1\leq j\leq m$. Then it is easy to see that $\tau|_N$ is trivial on $E_i'$. Hence, $E_i'\subseteq E$ is stable under $P$. Since $E'$ was assumed to be irreducible for $P$, we have $E'=E_i'$ and hence $N$ acts trivially on $E'$.\\
To show that $E'$ is unique, we simply observe that a highest weight vector for the action of $M$ on $E'$ is automatically a highest weight vector for the action of $G$ on $E$ which is unique (up to scalar multiples). Hence $E'$ is the $P$-subrepresentation of $E$ generated by the highest weight space.
\end{proof}

Further, in the case of minimal parabolic subgroups, essentially every irreducible finite-dimensional representation of $M$ extends to $G$:

\begin{Lemma}[{\cite[Theorem 2.1]{Wal71}}]\label{lem:ExistenceOfEnoughGreps}
Assume that $G$ is a linear connected reductive Lie group and $P\subseteq G$ is minimal parabolic. Then every irreducible finite-dimensional representation $(\tau',E')$ of $M$ is conjugate via the Weyl group to a representation that occurs as a direct summand in an irreducible finite-dimensional representation $(\tau,E)$ of $G$ and on which $N$ acts trivially.
\end{Lemma}

Similarly, for a fixed principal series representation $\Ind_{P_H}^H(\eta\otimes e^\nu\otimes\1)$ we have an isomorphism
\begin{gather}
\Ind_{P_H}^H\big(\eta\otimes e^\nu\otimes\1\big)\otimes E\stackrel{\sim}{\to}\Ind^H_{P_H}\big(\big(\eta\otimes e^{\nu}\otimes \1\big)\otimes E|_{P_H}\big).\label{eq:Map4}
\end{gather}
We also take a $P_H$-quotient space $E\twoheadrightarrow E''$ on which $N_H$ acts trivially and $A_H$ acts by a charac\-ter~$e^{\mu''}$. Note that such a quotient always exists since the contragredient representation $E^\vee$ of $E$ possesses a $P_H$-stable subspace on which $N_H$ acts trivially by Lemma~\ref{lem:UniquePstableSubspace}. However, in contrast to Lemma~\ref{lem:UniquePstableSubspace}, there might be several possibilities for $E''$ since $E|_H$ might not be irreducible. Denoting the $P_H$-action on $E''$ by $\tau''\otimes e^{\mu''}\otimes \1$, we get a map
\begin{gather}
\Ind^H_{P_H}\big(\big(\eta\otimes e^{\nu}\otimes \1\big) \otimes E|_{P_H}\big) \twoheadrightarrow \Ind^H_{P_H}\big((\eta\otimes \tau'')\otimes e^{\nu+\mu''}\otimes \1\big).\label{eq:Map5}
\end{gather}

Now suppose that an $H$-intertwining operator
\begin{align*}
A\colon \ \Ind^G_P\big(\xi\otimes e^{\lambda}\otimes \1\big)
\to \Ind^H_{P_H}\big(\eta\otimes e^{\nu}\otimes \1\big)
\end{align*}
is given, and form the tensor product
\begin{gather}
A\otimes\id_E\colon \ \Ind^G_P\big(\xi\otimes e^{\lambda}\otimes \1\big)\otimes E \to \Ind^H_{P_H}\big(\eta\otimes e^{\nu}\otimes \1\big)\otimes E.\label{eq:Map3}
\end{gather}
Then we obtain an $H$-intertwining operator
\begin{align*}
\Phi(A)\colon \ \Ind^G_P\big((\xi\otimes \tau') \otimes e^{\lambda+\mu'}\otimes \1\big)
\to \Ind^H_{P_H}\big((\eta\otimes \tau'')\otimes e^{\nu+\mu''}\otimes \1\big)
\end{align*}
by composing the maps \eqref{eq:Map1}, \eqref{eq:Map2}, \eqref{eq:Map3}, \eqref{eq:Map4} and \eqref{eq:Map5}, namely
\begin{align}
\Ind^G_P\big((\xi\otimes \tau') \otimes e^{\lambda+\mu'}\otimes \1\big)
\hookrightarrow{}&
\Ind^G_P\big(\big(\xi\otimes e^{\lambda}\otimes \1\big) \otimes E|_P\big)\notag\\
\stackrel{\sim}\to{}&
\Ind^G_P\big(\xi\otimes e^{\lambda}\otimes \1\big) \otimes E\notag\\
\to{}&
\Ind^H_{P_H}\big(\eta\otimes e^{\nu}\otimes \1\big)\otimes E\notag\\
\stackrel{\sim}\to{}&
\Ind^H_{P_H}\big(\big(\eta\otimes e^{\nu}\otimes \1\big)\otimes E|_{P_H}\big)\notag\\
\twoheadrightarrow{}&
\Ind^H_{P_H}\big((\eta\otimes \tau'')\otimes e^{\nu+\mu''}\otimes \1\big).\label{eq:TranslationPrincipleComposition}
\end{align}
This proves:

\begin{Theorem}\label{thm:TranslationPrinciple}Let $(\tau,E)$ be a finite-dimensional $G$-representation, $E'\subseteq E$ a $P$-stable subspace with $E'|_P=\tau'\otimes e^{\mu'}\otimes\1$ and $E\twoheadrightarrow E''$ a $P_H$-equivariant quotient with $E''|_{P_H}=\tau''\otimes e^{\mu''}\otimes\1$. Then \eqref{eq:TranslationPrincipleComposition} defines a linear map
\begin{gather*}
\Phi\colon \ \Hom_H\big(\Ind_P^G\big(\xi\otimes e^\lambda\otimes\1\big),\Ind_{P_H}^H\big(\eta\otimes e^\nu\otimes\1\big)\big)\\
\qquad{} \to \Hom_H\big(\Ind_P^G\big((\xi\otimes\tau')\otimes e^{\lambda+\mu'}\otimes\1\big),\Ind_{P_H}^H\big((\eta\otimes\tau'')\otimes e^{\nu+\mu''}\otimes\1\big)\big)
\end{gather*}
for all finite-dimensional representations $\xi$ of $M$ and $\eta$ of $M_H$ and all $\lambda\in\fraka_\CC^*$, $\nu\in\fraka_{H,\CC}^*$.
\end{Theorem}

\subsection{Integral kernels}

Recall that $H$-intertwining operators are given by distribution kernels (see Proposition~\ref{prop:DistributionKernel}). Let us see how the integral kernel behaves under the translation principle. Suppose that
\begin{gather*}
A\colon \ \Ind^G_P\big(\xi\otimes e^{\lambda}\otimes \1\big) \to \Ind^H_{P_H}\big(\eta\otimes e^{\nu}\otimes \1\big)
\end{gather*}
is given by a distribution kernel $K^A\in(\calD'(G/P,\calV_\lambda^*)\otimes W_\nu)^{\Delta(P_H)}$ in the sense that
\begin{gather*} Af(h) = \int_{G/P} c\big(f(x)\otimes K^A\big(h^{-1}x)\big)\,{\rm d}x, \qquad f\in C^\infty(G/P,\calV_\lambda), \end{gather*}
where $c$ denotes the contraction map $V_\lambda\otimes V_\lambda^*\otimes W_\nu\to W_\nu$ and the integral is meant in the distribution sense (see Section~\ref{sec:SBOs} for details). Write $i\colon E'\to E$ for the inclusion map and $p\colon E\to E''$ for the quotient map, and let $\calE'=G\times_P E'$ and $\calE''=H\times_{P_H}E''$ denote the corresponding homogeneous vector bundles over $G/P$ and $H/P_H$. Let $u\in C^\infty(G/P,\calE'\otimes\calV_\lambda)$, then $\Phi(A)u\in C^\infty(H/P_H,\calE''\otimes\calW_\nu)$ is given by
\begin{align*}
 \Phi(A)f(h)&= \big(\big(p\circ\tau\big(h^{-1}\big)\big)\otimes\id_{W_\nu}\big)\\
 &\hphantom{=}{}\times \int_{G/P} (\id_E\otimes c)\big(\big((\tau(x)\circ i)\otimes\id_{V_\lambda}\big) (f(x) )\otimes K^A\big(h^{-1}x\big)\big)\,{\rm d}x\\
&= \int_{G/P} c\big(\big(p\circ\tau\big(h^{-1}x\big)\circ i\big)f(x)\otimes K(h^{-1}x)\big)\,{\rm d}x.
\end{align*}
This implies:

\begin{Proposition}\label{prop:TranslationPrincipleDistributionKernel}
The integral kernel $\Phi\big(K^A\big)=K^{\Phi(A)}$ of $\Phi(A)$ is given by
\begin{gather*}
\Phi\big(K^A\big) = \varphi\otimes K^A,
\end{gather*}
where $\varphi\in C^\infty(G/P,(\calE')^\vee)\otimes E''\simeq C^\infty(G,(E')^\vee\otimes E'')^P$ is given by
\begin{gather*} \varphi(g) = p\circ\tau(g)\circ i \in \Hom_\CC(E',E'') \simeq (E')^\vee\otimes E''. \end{gather*}
\end{Proposition}

\begin{Remark}\label{rem:PropertiesMultMap}Since the operator $\Phi$ is given by multiplication with the fixed smooth function~$\varphi$, and the multiplication map
\begin{gather*} \calD'(G/P,\calV_\lambda^*)\otimes W_\nu \to \calD'\big(G/P,\calV_\lambda^*\otimes(\calE')^\vee\big)\otimes(W_\nu\otimes E''), \qquad K\mapsto\varphi\otimes K \end{gather*}
is continuous, the operator $\Phi$ maps holomorphic families of distributions to holomorphic families. More precisely, if $K_z\in(\calD'(G/P,\calV_\lambda^*)\otimes W_\nu)^{\Delta(P_H)}$ depends holomorphically on $z\in\Omega\subseteq\CC^n$, then~$\Phi(K_z)$ depends holomorphically on $z\in\Omega$. More generally, if $K_z$ depends meromorphically on $z\in\Omega$ with poles in the set $\Sigma\subseteq\Omega$, then~$\Phi(K_z)$ depends meromorphically on $z\in\Omega$ and its poles are contained in $\Sigma$. However, it may of course happen that $K_z$ has a pole at $z=z_0$ whereas $\Phi(K_z)$ is regular at $z=z_0$ since the multiplication map $K\mapsto\varphi\otimes K$ can have a non-trivial kernel (see, e.g., Remark~\ref{rem:PolesDisappear}). This also implies that a holomorphic/meromorphic family $K_z$, which does not vanish identically, might be mapped to $\Phi(K_z)=0$ for all $z\in\Omega$ (see, e.g., Remark~\ref{rem:ZerosAppear}). If, however, the family $K_z$ has generically full support, i.e., $\supp K_z=G/P$ for generic $z\in\Omega$, then $\Phi(K_z)=\varphi\otimes K_z$ cannot be identically zero for all $z\in\Omega$. In fact, $\varphi$ is an analytic function which is non-zero due to the irreducibility of $(\tau,E)$, so it has full support $\supp\varphi=G/P$ and hence
\begin{gather*} \supp K_z=G/P \quad \Rightarrow \quad \supp(\varphi\otimes K_z)=G/P. \end{gather*}
\end{Remark}

\begin{Remark}Since $\varphi(g)$ is a matrix coefficient of a finite-dimensional repesentation, it is obviously smooth in $g\in G$. In view of Theorem~\ref{thm:DistributionKernelOnNbar} one can in some cases study the distribution kernels by their restriction to $\overline{\frakn}\simeq\overline{N}\hookrightarrow G/P$. On $\overline{\frakn}$ the function $\varphi(g)$ is actually a polynomial. In fact, the nilpotency of $\overline{\frakn}$ implies that there exists $N\in\NN$ such that $\tau(X_1)\cdots\tau(X_n)=0$ for all $X_1,\ldots,X_n\in\overline{\frakn}$ and $n\geq N$. This shows that $\varphi|_{\overline{\frakn}}$ is a polynomial of degree at most~$N$.
\end{Remark}

\subsection[Reformulation using $\overline{P}$]{Reformulation using $\boldsymbol{\overline{P}}$}

We can also use the opposite parabolic subgroup $\overline{P}$ instead of $P$ in the above procedure. Assume there exists an element $\tilde{w}_0\in K$ such that $\tilde{w}_0 \overline{P} \tilde{w}_0^{-1}=P$ and $\tilde{w}_0 A \tilde{w}_0^{-1}=A$. Then we have a $G$-equivariant isomorphism
\begin{gather*}
\Ind^G_{P}\big(\xi\otimes e^{\lambda}\otimes \1\big)
\simeq
\Ind^G_{\overline{P}}
\big(\tilde{w}_0^{-1}\xi\otimes e^{w_0^{-1}\lambda}\otimes \1\big), \qquad
f\mapsto f\big(\,\cdot\, \tilde{w}_0^{-1}\big).
\end{gather*}
Here, $\tilde{w}_0^{-1}\xi$ denotes the representation of $M$ on $V$ given by $\big(\tilde{w}_0^{-1}\xi\big)(m)=\xi\big(\tilde{w}_0 m \tilde{w}_0^{-1}\big)$.
Suppose that an $H$-intertwining operator
\begin{gather*}
A\colon \ \Ind^G_P\big(\xi\otimes e^{\lambda}\otimes \1\big)
\to \Ind^H_{P_H}\big(\eta\otimes e^{\nu}\otimes \1\big)
\end{gather*}
is given.
Composing with the above ismorphism we have
\begin{gather*}
\Ind^G_{\overline{P}}\big(\tilde{w}_0^{-1}\xi\otimes
e^{w_0^{-1}\lambda}\otimes \1\big) \to
\Ind^H_{P_H}\big(\eta\otimes e^{\nu}\otimes \1\big).
\end{gather*}
Then in a similar way, using $\overline{P}$ instead of $P$, we obtain an $H$-intertwining operator
\begin{gather*}
\Ind^G_{\overline{P}}
\big(\big(\tilde{w}_0^{-1}\xi\otimes \tau'\big) \otimes e^{w_0^{-1}\lambda+\mu'}\otimes \1\big)
\to \Ind^H_{P_H}\big((\eta\otimes \tau'')\otimes e^{\nu+\mu''}\otimes \1\big)
\end{gather*}
for every $\overline{P}$-stable subspace $i\colon E'\hookrightarrow E$ with $E'\simeq\tau'\otimes e^{\mu'}\otimes \1$ and every $P_H$-stable quotient $p\colon E\to E''$. Composing with the map $f\mapsto f(\,\cdot\,\tilde{w}_0)$, we get
\begin{gather*}
\Psi(A)\colon \ \Ind^G_P\big((\xi\otimes \tilde{w}_0\tau')
\otimes e^{\lambda+w_0\mu'}\otimes \1\big)
\to \Ind^H_{P_H}\big((\eta\otimes \tau'') \otimes e^{\nu+\mu''}\otimes \1\big).
\end{gather*}
Moreover, if $A$ is given by the distribution kernel $K^A\in(\calD'(G/P,\calV_\lambda^*)\otimes W_\nu)^{\Delta(P_H)}$, then we similarly see that $\Psi(A)$ has distribution kernel $\Psi\big(K^A\big)=K^{\Psi(A)}$ given by
\begin{gather*}
\Psi\big(K^A\big) = (p\circ \tau(\blank\tilde{w}_0)\circ i) \otimes K^A.
\end{gather*}

Now assume additionally that $P_H\subseteq P$ with $M_HA_H\subseteq MA$ and $N_H\subseteq N$, then $E'$ and $E''$ can be chosen compatibly. More precisely, let $E'\subseteq E$ be the unique irreducible $\overline{P}$-subrepresentation (see Lemma~\ref{lem:UniquePstableSubspace}), then $E'$ is the lowest $\fraka$-weight space and $E'\simeq\tau'\otimes e^{\mu'}\otimes\1$. Write $E_1$ for the direct sum of all other $\fraka$-weight spaces, then $E=E'\oplus E_1$ and the projection $E\twoheadrightarrow E'$ is $P_H$-equivariant. Hence we can take $E''=E'$:

\begin{Corollary}\label{cor:KnappSteinTranslationPrinciple}
Assume that $\tilde{w}_0\overline{P}\tilde{w}_0^{-1}=P$ and that $M_HA_H\subseteq MA$ and $N_H\subseteq N$. Let $(\tau,E)$ be an irreducible finite-dimensional $G$-representation and $E'\subseteq E$ the unique irreducible $\overline{P}$-subrepresentation with $E'=\tau'\otimes e^{\mu'}\otimes\1$. Then for all finite-dimensional representations $\xi$ of $M$ and $\eta$ of $M_H$ and all $\lambda\in\fraka_\CC^*$, $\nu\in\fraka_{H,\CC}^*$ we obtain a linear map
\begin{gather*}
\Psi\colon \ \Hom_H\big(\Ind_P^G\big(\xi\otimes e^\lambda\otimes\1\big),\Ind_{P_H}^H\big(\eta\otimes e^\nu\otimes\1\big)\big)\\
\qquad {} \to \Hom_H\big(\Ind_P^G\big((\xi\otimes\tilde{w}_0\tau')\otimes e^{\lambda+w_0\mu'}\otimes\1\big),\Ind_{P_H}^H\big((\eta\otimes\tau'|_{M_H})\otimes e^{\nu+\mu'|_{\fraka_H}}\otimes\1\big)\big),
\end{gather*}
which maps an intertwining operator $A$ with distribution kernel $K^A$ to the intertwining operator $\Psi(A)$ with distribution kernel $\Psi(K^A)=K^{\Psi(A)}$ given by
\begin{gather*} \Psi\big(K^A\big) = \psi\otimes K^A, \end{gather*}
where $\psi\in C^\infty(G/P,(\calE')^\vee)\otimes E'\simeq C^\infty(G,(E')^\vee\otimes E')^P$ is given by
\begin{gather*} \psi(g) = \tau'\big(m\big(\tilde{w}_0^{-1}g^{-1}\big)\big)^{-1}\cdot a\big(\tilde{w}_0^{-1}g^{-1}\big)^{-\mu'} \in \End_\CC(E') \simeq (E')^\vee\otimes E'. \end{gather*}
\end{Corollary}

\begin{proof}It remains to show the formula for $\psi(g)$. Write
\begin{gather*} \tilde{w}_0^{-1}g^{-1} = \overline{n}\big(\tilde{w}_0^{-1}g^{-1}\big)m\big(\tilde{w}_0^{-1}g^{-1}\big)a\big(\tilde{w}_0^{-1}g^{-1}\big)n\big(\tilde{w}_0^{-1}g^{-1}\big) \in \overline{N}MAN, \end{gather*}
then we have
\begin{gather*} g\tilde{w}_0 = n\big(\tilde{w}_0^{-1}g^{-1}\big)^{-1}m\big(\tilde{w}_0^{-1} g^{-1}\big)^{-1}a\big(\tilde{w}_0^{-1} g^{-1}\big)^{-1}\overline{n}\big(\tilde{w}_0^{-1}g^{-1}\big)^{-1}\in NMA\overline{N}. \end{gather*}
Since $\overline{N}$ acts trivially on $E'$ we have $\tau(\overline{n})\circ i=i$ for $\overline{n}\in\overline{N}$, and similarly $p\circ\tau(n)=p$ for $n\in N_H\subseteq N$. Hence
\begin{gather*}
p\circ\tau(g\tilde{w}_0)\circ i = \tau'\big(m\big(\tilde{w}_0^{-1} g^{-1}\big)^{-1}\big)\cdot a\big(\tilde{w}_0^{-1} g^{-1}\big)^{-\mu'}.\tag*{\qed}
\end{gather*}
\renewcommand{\qed}{}
\end{proof}

\begin{Remark}\label{rem:ClassicalKnappStein}We note that the function $\psi(g)$ resembles the integral kernel of the standard Knapp--Stein intertwining operator
\begin{gather*} A(\lambda)\colon \ \Ind_P^G\big(\tau'\otimes e^\lambda\otimes\1\big)\to\Ind_P^G\big(\tilde{w}_0\tau'\otimes e^{w_0\lambda}\otimes\1\big), \qquad Af(g) = \int_{\overline{N}}f(g\tilde{w}_0\overline{n})\,{\rm d}\overline{n}. \end{gather*}
More precisely, it is shown in \cite[Chapter~VII, Section~7]{Kna86} that
\begin{gather*} Af(g) = \int_{\overline{N}} \tau'\big(m\big(\tilde{w}_0^{-1}\overline{n}\big)\big)a\big(\tilde{w}_0^{-1}\overline{n}\big)^{\lambda-\rho} f(g\overline{n})\,{\rm d}\overline{n}. \end{gather*}
\end{Remark}

\subsection{Application to Knapp--Stein type intertwining operators}\label{sec:VectorValuedKnappSteinKernels}

We now specialize to the setting of Section~\ref{sec:KStypeIntertwiningOperators}. In this case, all assumptions of Corollary~\ref{cor:KnappSteinTranslationPrinciple} are satisfied and we can apply it to the family $A(\alpha,\beta)$ of intertwining operators
\begin{gather*}
A(\alpha,\beta)\colon \ \Ind^G_P\big(\1 \otimes e^{\lambda}\otimes \1\big)
\to \Ind^H_{P_H}\big(\1 \otimes e^{\nu}\otimes \1\big),
\end{gather*}
$\lambda$ and $\nu$ given by \eqref{eq:LambdaNuAlphaBeta}, with distribution kernels
\begin{gather*}
K_{\alpha,\beta}(g) = a\big(\tilde{w}_0^{-1} g^{-1}\big)^{\alpha} a\big(\tilde{w}_0^{-1} g^{-1}\sigma(g)\big)^{\beta}, \qquad g\in G.
\end{gather*}

We call an irreducible finite-dimensional representation $\xi$ of $M$ \textit{extendible} if there exists a~finite-dimensional irreducible representation $(\tau,E)$ of $G$ such that
\begin{gather*} E^{\overline N} = \big\{v\in E\colon \tau(\overline{n})v=v\,\forall\,\overline{n}\in\overline{N}\big\} \simeq \xi \qquad \mbox{as $M$-representations.} \end{gather*}

\begin{Corollary}\label{cor:VectorValuedKnappSteinKernels}
Assume conditions \eqref{ass:G}, \eqref{ass:H} and \eqref{ass:D}. Then for every extendible irreducible finite-dimensional $M$-representation $\xi$ the functions
\begin{gather*}
K_{\xi,\alpha,\beta}(g) = \xi\big(m\big(\tilde{w}_0^{-1} g^{-1}\big)\big)^{-1} a\big(\tilde{w}_0^{-1}g^{-1}\big)^\alpha a\big(\tilde{w}_0^{-1}g^{-1}\sigma(g)\big)^\beta, \qquad g\in G,
\end{gather*}
extend to a meromorphic family of distributions
\begin{gather*} K_{\xi,\alpha,\beta} \in (\calD'(G/P,\calV_\lambda^*)\otimes W_\nu)^{\Delta(P_H)}, \end{gather*}
for $\lambda$ and $\nu$ given by \eqref{eq:LambdaNuAlphaBeta} and $\calV_\lambda=G\times_P\big(\tilde{w}_0\xi\otimes e^{\lambda+\rho}\otimes\1\big)$, $W_\nu=\xi|_{M_H}\otimes e^{\nu+\rho_H}\otimes\1$.
Therefore, they give rise to a meromorphic family of intertwining operators
\begin{gather*} A(\xi,\alpha,\beta)\colon \ \Ind^G_P\big(\tilde{w}_0\xi\otimes e^\lambda\otimes\1\big) \to \Ind^H_{P_H}\big(\xi\otimes e^\nu\otimes\1\big). \end{gather*}
\end{Corollary}

\begin{proof}Let $(\tau,E)$ be an irreducible finite-dimensional representation of $G$ with $E^{\overline N}\simeq\xi$ as $M$-representations, then $E'=E^{\overline N}\subseteq E$ is $\overline{P}$-stable of the form $E'\simeq\xi\otimes e^\mu\otimes\1$. Now the statement follows from Theorem~\ref{thm:KnappSteinKernels} and Corollary~\ref{cor:KnappSteinTranslationPrinciple}.
\end{proof}

\begin{Remark}\label{rem:ExtendibilityForRankOneOrthogonal}For $(G,H)=(\upO(n+1,1),\upO(n,1))$ one can choose a representative $\tilde{w}_0$ of $w_0$ that centralizes $M$. Hence, $\tilde{w}_0\xi=\xi$ for all irreducible finite-dimensional representations $\xi$ of $M$. By Lemma~\ref{lem:ExistenceOfEnoughGreps} either $\xi$ or $\tilde{w}_0\xi=\xi$ is extendible, so that in this case all irreducible finite-dimensional $M$-representations are extendible.
\end{Remark}

\section[Example: $(G,H)=(\upO(n+1,1),\upO(n,1))$]{Example: $\boldsymbol{(G,H)=(\upO(n+1,1),\upO(n,1))}$}\label{sec:ExScalar}

We apply the translation principle to the symmetry breaking operators between spherical principal series of rank one orthogonal groups studied by Kobayashi--Speh~\cite{KS15} and obtain new symmetry breaking operators between non-spherical scalar-valued principal series.

\subsection{Parabolic subgroups and the symmetric pair}\label{sec:ExScalarGroups}

Let $G=\upO(n+1,1)$, $n\geq2$, realized as the subgroup of $\GL(n+2,\RR)$ preserving the indefinite bilinear form
\begin{gather*} \RR^{n+2}\to\RR, \qquad x\mapsto x_1^2+\cdots+x_{n+1}^2-x_{n+2}^2. \end{gather*}
We choose the (minimal) parabolic subgroup $P=P_{\min}=MAN\subseteq G$ such that $\fraka=\RR H_0$ with
\begin{gather*}
H_0 = \left(\begin{matrix}0&&1\\&\0_n&\\1&&0\end{matrix}\right)
\end{gather*}
and $\frakn=\frakg_\alpha$ for $\alpha\in\fraka^*$ with $\alpha(H_0)=1$. Then
\begin{gather*} M = \left\{\left(\begin{matrix}\varepsilon&&\\&m&\\&&\varepsilon\end{matrix}\right)\colon m\in\upO(n),\varepsilon=\pm1\right\} \simeq \upO(n)\times\ZZ/2\ZZ. \end{gather*}
Let $m_0=\diag(-1,1,\ldots,1,-1)$ and identify $m\in\upO(n)$ with $\diag(1,m,1)\in M$, then $M=\upO(n)\cup m_0\upO(n)$. We further identify $\overline{\frakn}=\frakg_{-\alpha}\cong\RR^n$ by
\[ \RR^n\to\overline{\frakn},\,x\mapsto\left(\begin{matrix}0&x^\top&0\\x&\0_n&x\\0&-x^\top&0\end{matrix}\right) \]
and use this identification to parametrize $\overline{N}=\exp(\overline{\frakn})$ by
\begin{gather*} \RR^n\to\overline{\frakn}\xrightarrow{\exp}\overline{N}, \qquad x\mapsto\overline{n}_x=\left(\begin{matrix}1+|x|^2/2&x^\top&|x|^2/2\\x&\1_n&x\\-|x|^2/2&-x^\top&1-|x|^2/2\end{matrix}\right). \end{gather*}
The group $M=\upO(n)\cup m_0\upO(n)$ acts on $x\in\overline{\frakn}\cong\RR^n$ by the adjoint action as follows:
\begin{gather*} \Ad(m)x = mx, \quad m\in\upO(n), \qquad \Ad(m_0)x = -x. \end{gather*}
Further, $P$ is conjugate to its opposite parabolic $\overline{P}=MA\overline{N}$ by
\begin{gather*}
\tilde{w}_0 = \diag(1,\ldots,1,-1).
\end{gather*}

Let us identify $\fraka_\CC^*\cong\CC$ by $\lambda\mapsto\lambda(H_0)$, so that $\rho=\frac{n}{2}$.

\begin{Lemma}\label{lem:AMprojection}
For $x\in\RR^n$:
\begin{gather*} \tilde{w}_0^{-1}\overline{n}_x\in\overline{N}MAN \quad \Leftrightarrow \quad x\in\RR^n\setminus\{0\}, \end{gather*}
and in this case for $\lambda\in\fraka_\CC^*\simeq\CC$ we have
\begin{gather*} a\big(\tilde{w}_0^{-1}\overline{n}_x\big)^\lambda = |x|^{2\lambda}, \qquad m\big(\tilde{w}_0^{-1}\overline{n}_x\big) = \left(\begin{matrix}1&&\\&\1_n-2\frac{xx^\top}{|x|^2}&\\&&1\end{matrix}\right). \end{gather*}
\end{Lemma}

Now let $\sigma$ be the involution of $G$ given by conjugation with the matrix $\diag(1,\ldots,1,-1,1)$. Then $H=G^\sigma\simeq\upO(n,1)$ and $(G,H)$ forms a symmetric pair. It is easy to see that the pair $(G,H)$ satisfies the assumptions in Theorem~\ref{thm:DistributionKernelOnNbar} and we write $P_H=P\cap H=M_HA_HN_H$. Then
\begin{gather*} M_H=M\cap H=\upO(n-1)\cup m_0\upO(n-1) \qquad \mbox{and} \qquad \fraka_H=\fraka=\RR H_0. \end{gather*}
Further, under the identification $\overline{\frakn}\simeq\RR^n$ the involution $\sigma$ acts by
\begin{gather*} \sigma(x)=(x_1,\ldots,x_{n-1},-x_n), \qquad x\in\RR^n, \end{gather*}
and therefore the subalgebra $\overline{\frakn}_H=\overline{\frakn}^\sigma$ is given by the standard embedding of $\RR^{n-1}$ into $\RR^n$ as the first $n-1$ coordinates.

\subsection{Principal series representations and symmetry breaking operators}\label{sec:ExScalarPrincipalSeriesSBOs}

Denote by $\sgn\colon M\to\{\pm1\}$ the character of $M=\upO(n)\cup m_0\upO(n)$ given by $\sgn(m)=1$ for $m\in\upO(n)$ and $\sgn(m_0)=-1$. Abusing notation we also write $\sgn$ for the corresponding character of $M_H$. For $\delta,\varepsilon\in\ZZ/2\ZZ$ and $\lambda,\nu\in\fraka_\CC^*\simeq\CC$ we define the scalar principal series representations (smooth normalized parabolic induction)
\begin{gather*} \pi_{\lambda,\delta} = \Ind_P^G\big(\sgn^\delta\otimes e^\lambda\otimes\1\big) \qquad \mbox{and} \qquad \tau_{\nu,\varepsilon} = \Ind_{P_H}^H\big(\sgn^\varepsilon\otimes e^\nu\otimes\1\big). \end{gather*}

We consider the space $\Hom_H(\pi_{\lambda,\delta}|_H,\tau_{\nu,\varepsilon})$ of symmetry breaking operators between $\pi_{\lambda,\delta}$ and~$\tau_{\nu,\varepsilon}$. By Theorem~\ref{thm:DistributionKernelOnNbar} every such operator is given by a distribution kernel $K\in\calD'(\overline{\frakn})^{M_HA_H,\frakn_H}$ satisfying certain invariance conditions for the action of $M_HA_H$ and $\frakn_H$. Since $m_0\in M_H$ acts by $K(x)\mapsto(-1)^{\delta+\varepsilon}K(-x)$, it is clear that $\calD'(\overline{\frakn})^{M_HA_H,\frakn_H}$ only depends on the parity of $\delta+\varepsilon$. Identifying $\overline{\frakn}\simeq\RR^n$ as above we write $\calD'\big(\RR^n\big)_{\lambda,\nu}^+$, resp.~$\calD'\big(\RR^n\big)_{\lambda,\nu}^+$, for the space $\calD'\big(\RR^n\big)^{M_HA_H,\frakn_H}$ with $\delta+\varepsilon\equiv0(2)$, resp.~$\delta+\varepsilon\equiv1(2)$. Then, taking distribution kernels is a linear isomorphism
\begin{gather*} \Hom_H(\pi_{\lambda,\delta}|_H,\tau_{\nu,\varepsilon}) \stackrel{\sim}{\to} \begin{cases}\calD'\big(\RR^n\big)_{\lambda,\nu}^+&\mbox{for $\delta+\varepsilon\equiv0(2)$,}\\\calD'\big(\RR^n\big)_{\lambda,\nu}^-&\mbox{for $\delta+\varepsilon\equiv1(2)$.}\end{cases} \end{gather*}

\subsection{Construction of symmetry breaking operators}\label{sec:ConstructionSBOs}

In this section we describe all intertwining operators in $\Hom_H(\pi_{\lambda,\delta},\tau_{\nu,\varepsilon})$ for all $\lambda,\nu\in\CC$, $\delta,\varepsilon\in\ZZ/2\ZZ$. We note that the notation is essentially due to Kobayashi--Speh~\cite{KS15}.

\subsubsection{Spherical principal series}

For $\delta=\varepsilon=0$ the representations $\pi_{\lambda,\delta}$ and $\tau_{\nu,\varepsilon}$ are spherical, i.e. possess a vector invariant under a maximal compact subgroup. In this setting, Section~\ref{sec:KStypeIntertwiningOperators} provides a meromorphic family of intertwining operators $\AA_{\lambda,\nu}\in\Hom_H(\pi_{\lambda,0}|_H,\tau_{\nu,0})$ given by the distribution kernels
\begin{gather*} K_{\lambda,\nu}^{\AA,+}(x) = |x_n|^{\lambda+\nu-\frac{1}{2}}\big(|x'|^2+x_n^2\big)^{-\nu-\frac{n-1}{2}}, \qquad x\in\RR^n. \end{gather*}
By analyzing the poles and residues of $K^{\AA,+}_{\lambda,\nu}$ explicitly, Kobayashi--Speh~\cite{KS15} completely determine the space $\calD'\big(\RR^n\big)_{\lambda,\nu}^+$ for all $\lambda,\nu\in\CC$. (Note that our parameters $(\lambda,\nu)$ are normalized such that $\pi_{\lambda,\delta}$ and $\tau_{\nu,\varepsilon}$ are unitary for $\lambda,\nu\in i\RR$. Therefore, in our notation one has to replace $(\lambda,\nu)$ by $(\lambda-\rho,\nu-\rho_H)$ to obtain Kobayashi--Speh's notation, see~\cite{KS15}.) To summarize their results let
\begin{gather*}
\LocalPoles^+ = \big\{(\lambda,\nu)\colon \lambda+\nu=-\tfrac{1}{2}-2k,k\in\NN\big\},\\
\GlobalPoles^+ = \big\{(\lambda,\nu)\colon \lambda-\nu=-\tfrac{1}{2}-2\ell,\ell\in\NN\big\},\\
L_\even = \big\{(\lambda,\nu)=(-\rho-i,-\rho_H-j)\colon i,j\in\NN,i-j\in2\NN\big\} \subseteq \GlobalPoles^+.
\end{gather*}
For $(\lambda,\nu)\in\LocalPoles^+$ with $\lambda+\nu=-\frac{1}{2}-2k$ we further define
\begin{align*}
\widetilde{K}^{\BB,+}_{\lambda,\nu}(x) &= \frac{1}{\Gamma\big(\frac{1}{2}\big(\lambda-\nu+\frac{1}{2}\big)\big)}\big(|x'|^2+x_n^2\big)^{-\nu-\frac{n-1}{2}}\delta^{(2k)}(x_n)\\
&= \sum_{i=0}^k\frac{(-1)^i(2k)!\big(\nu+\frac{n-1}{2}\big)_i}{i!(2k-2i)!\Gamma\big(\frac{1}{2}\big(\lambda-\nu+\frac{1}{2}\big)\big)}|x'|^{1-n-2\nu-2i}\delta^{(2k-2i)}(x_n),
\end{align*}
and for $(\lambda,\nu)\in\GlobalPoles^+$ with $\lambda-\nu=-\frac{1}{2}-2\ell$ we put
\begin{gather*}
\widetilde{K}^{\CC,+}_{\lambda,\nu}(x) = \sum_{j=0}^\ell\frac{2^{2\ell-2j}(\nu-\ell)_{\ell-j}}{j!(2\ell-2j)!}\big(\Delta_{\RR^{n-1}}^j\partial_n^{2\ell-2j}\delta\big)(x).
\end{gather*}
Note that both $\widetilde{K}^{\BB,+}_{\lambda,\nu}$ and $\widetilde{K}^{\CC,+}_{\lambda,\nu}$ depend holomorphically on $\nu\in\CC$ (or $\lambda\in\CC$).

\begin{Theorem}\label{thm:SBOsSpherical}The renormalized distribution
\begin{gather*} \widetilde{K}^{\AA,+}_{\lambda,\nu}(x) = \frac{K^{\AA,+}_{\lambda,\nu}(x)}{\Gamma\big(\frac{1}{2}\big(\lambda+\nu+\frac{1}{2}\big)\big)\Gamma\big(\frac{1}{2}\big(\lambda-\nu+\frac{1}{2}\big)\big)} \end{gather*}
depends holomorphically on $(\lambda,\nu)\in\CC^2$ and vanishes only for $(\lambda,\nu)\in L_\even$. More precisely,
\begin{enumerate}\itemsep=0pt
\item[$1.$] For $(\lambda,\nu)\in\CC^2-(\LocalPoles^+\cup\GlobalPoles^+)$ we have $\supp\widetilde{K}^{\AA,+}_{\lambda,\nu} = \RR^n$.
\item[$2.$] For $(\lambda,\nu)\in\LocalPoles^+-\GlobalPoles^+$ with $\lambda+\nu=-\frac{1}{2}-2k$, $k\in\NN$, we have
\begin{gather*} \widetilde{K}^{\AA,+}_{\lambda,\nu}(x) = \frac{(-1)^kk!}{(2k)!}\widetilde{K}^{\BB,+}_{\lambda,\nu}(x). \end{gather*}
In particular, $\supp\widetilde{K}^{\AA,+}_{\lambda,\nu}=\RR^{n-1}$.
\item[$3.$] For $(\lambda,\nu)\in\GlobalPoles^+-L_\even$ with $\lambda-\nu=-\frac{1}{2}-2\ell$, $\ell\in\NN$, we have
\begin{gather*} \widetilde{K}^{\AA,+}_{\lambda,\nu}(x) = \frac{(-1)^\ell\ell!\pi^{\frac{n-1}{2}}}{2^{2\ell}\Gamma\big(\nu+\frac{n-1}{2}\big)}\widetilde{K}^{\CC,+}_{\lambda,\nu}(x). \end{gather*}
In particular, $\supp\widetilde{K}^{\AA,+}_{\lambda,\nu}=\{0\}$.
\item[$4.$] For $(-\rho-i,-\rho_H-j)\in L_\even$, $2\ell=i-j$, restricting the function $(\lambda,\nu)\mapsto\widetilde{K}^{\AA,+}_{\lambda,\nu}$ to two different complex hyperplanes in $\CC^2$ through $(-\rho-i,-\rho_H-j)$ and renormalizing gives two holomorphic families of distributions:
\begin{gather*}
 \widetilde{K}^{\CC,+}_{\lambda,\nu}(x) = \frac{(-1)^\ell2^{2\ell}}{\ell!\pi^{\frac{n-1}{2}}}\Gamma(\nu+\rho_H)\widetilde{K}^{\AA,+}_{\lambda,\nu}(x) ,\qquad \lambda-\nu=-\tfrac{1}{2}-2\ell ,\\
\doublewidetilde{K}^{\AA,+}_{\lambda,\nu}(x) = \Gamma\big(\tfrac{1}{2}\big(\lambda-\nu+\tfrac{1}{2}\big)\big)\widetilde{K}^{\AA,+}_{\lambda,\nu}(x),\qquad \nu=-\rho_H-j.
\end{gather*}
Their special values at $(-\rho-i,-\rho_H-j)$ satisfy
\begin{gather*}
\supp\widetilde{K}^{\CC,+}_{-\rho-i,-\rho_H-j} = \{0\},\\
\supp\doublewidetilde{K}^{\AA,+}_{-\rho-i,-\rho_H-j} = \begin{cases}\RR^n,&\mbox{for $n$ even,}\\\RR^{n-1},&\mbox{for $n$ odd,}\end{cases}
\end{gather*}
more precisely, for $n$ odd and $2k=i+j+n-1$
\begin{gather*} \doublewidetilde{K}^{\AA,+}_{-\rho-i,-\rho_H-j} = \frac{(-1)^kk!}{(2k)!}\big(|x'|^2+x_n^2\big)^j\delta^{(2k)}(x_n). \end{gather*}
\end{enumerate}
\end{Theorem}

\begin{proof}This is a summary of the results of \cite{KS15}.
\end{proof}

These results can be used to describe the space $\calD'\big(\RR^n\big)_{\lambda,\nu}^+$:

\begin{Theorem}\label{thm:SBOsSphericalClassification}We have
\begin{gather*} \calD'\big(\RR^n\big)_{\lambda,\nu}^+ = \begin{cases}\CC\widetilde{K}^{\AA,+}_{\lambda,\nu},&\mbox{for $(\lambda,\nu)\in\CC^2\setminus L_\even$,}\\\CC\doublewidetilde{K}^{\AA,+}_{\lambda,\nu}\oplus\CC \widetilde{K}^{\CC,+}_{\lambda,\nu},&\mbox{for $(\lambda,\nu)\in L_\even$.}\end{cases} \end{gather*}
\end{Theorem}

\begin{proof}
See \cite[Theorem 1.9]{KS15} and also \cite[Theorem 4.9]{MO14a}.
\end{proof}

\subsubsection{Scalar principal series}

We use the translation principle to describe $\calD'\big(\RR^n\big)_{\lambda,\nu}^-$ in terms of $\calD'\big(\RR^n\big)_{\lambda,\nu}^+$.

Let $(\tau,E)$ be the defining representation of $G=\upO(n+1,1)$ on $E=\CC^{n+2}$. Then
\begin{gather*} i\colon \ E'=\CC\hookrightarrow E, \qquad z\mapsto(z,0,\ldots,0,z)^\top \end{gather*}
is the maximal subspace on which $N$ acts trivially, and $E'|_P=\sgn\otimes e^\alpha\otimes\1$. For the $P_H$-quotient space we choose
\begin{gather*} p\colon \ E\twoheadrightarrow \CC, \qquad (z_1,\ldots,z_{n+2})\mapsto z_{n+1}, \end{gather*}
then $E''|_{P_H}=\1\otimes\1\otimes\1$. Now let us compute the $(E')^*\otimes E''$-valued function $p\circ\tau(g)\circ i$ on $\overline{N}$. For $g=\overline{n}_x$, $x\in\RR^n$, we have
\begin{gather*} p\circ\tau(g)\circ i(1)^\top = p\big(1+|x|^2,2x,1-|x|^2\big)^\top = 2x_n. \end{gather*}
Hence, by Theorem~\ref{thm:TranslationPrinciple}, we get a linear map
\begin{gather*} x_n\colon \ \calD'\big(\RR^n\big)_{\lambda,\nu}^\pm \to \calD'\big(\RR^n\big)_{\lambda+1,\nu}^\mp, \qquad K(x)\mapsto x_nK(x). \end{gather*}

Applying this map to the meromorphic family of distributions $K^{\AA,+}_{\lambda,\nu}(x)$ we obtain another meromorphic family
\begin{gather*} K^{\AA,-}_{\lambda,\nu}(x) = x_n\cdot K^{\AA,+}_{\lambda-1,\nu} = \sgn(x_n)|x_n|^{\lambda+\nu-\frac{1}{2}}\big(|x'|^2+x_n^2\big)^{-\nu-\frac{n-1}{2}}, \qquad x\in\RR^n. \end{gather*}

To normalize $K^{\AA,-}_{\lambda,\nu}(x)$ so that it becomes holomorphic in $(\lambda,\nu)\in\CC^2$ we first consider the kernel of the map
\begin{gather}
x_n\colon \ \calD'\big(\RR^n\big)_{\lambda,\nu}^+\to\calD'\big(\RR^n\big)_{\lambda+1,\nu}^-.\label{eq:XnTranslationMap}
\end{gather}

\begin{Lemma}\label{lem:SBOsSameSignSingular}We have
\begin{gather*} \big\{K\in\calD'\big(\RR^n\big)_{\lambda,\nu}^+\colon x_nK=0\big\} = \begin{cases}\CC\widetilde{K}^{\BB,+}_{\lambda,\nu},&\mbox{for $\lambda+\nu=-\frac{1}{2}$,}\\ \CC\widetilde{K}^{\CC,+}_{\lambda,\nu},&\mbox{for $\lambda-\nu=-\frac{1}{2}$,}\\ \{0\},&\mbox{else.}\end{cases} \end{gather*}
\end{Lemma}

Note that for $(\lambda,\nu)=\big({-}\frac{1}{2},0\big)$ we have $\widetilde{K}^{\BB,+}_{\lambda,\nu}(x)=\widetilde{K}^{\CC,+}_{\lambda,\nu}(x)=\delta(x)$.

\begin{proof}If $K\in\calD'\big(\RR^n\big)_{\lambda,\nu}^+$ such that $x_nK=0$ then clearly $\supp(K)\subseteq\RR^{n-1}$. Hence,
\begin{gather*} K = \sum_{m=0}^M u_m(x')\delta^{(m)}(x_n) \end{gather*}
for some distributions $u_m\in\calD'\big(\RR^{n-1}\big)$. Since $x_n\delta^{(m)}(x_n)=-m\delta^{(m-1)}(x_n)$ we must have $u_m=0$ for $m>0$, so that $K(x)=u_0(x')\delta(x_n)$. By the classification in Theorems~\ref{thm:SBOsSpherical} and~\ref{thm:SBOsSphericalClassification} the only possibilities for such $K$ in the space $\calD'\big(\RR^n\big)_{\lambda,\nu}^+$ are
\begin{gather*} K(x) = \widetilde{K}^{\BB,+}_{\lambda,\nu}(x) = \frac{|x'|^{1-n-2\nu}\delta(x_n)}{\Gamma(-\nu)} \end{gather*}
with $\lambda+\nu=-\frac{1}{2}$ (i.e., $k=0$) and
\begin{gather*} K(x) = \widetilde{K}^{\CC,+}_{\lambda,\nu}(x) = \delta(x) \end{gather*}
with $\lambda-\nu=-\frac{1}{2}$ (i.e., $\ell=0$).
\end{proof}

\begin{Remark}\label{rem:ZerosAppear}Lemma~\ref{lem:SBOsSameSignSingular} implies that the families
\begin{gather*} K^\BB_\nu=\widetilde{K}_{-\frac{1}{2}-\nu,\nu}^{\BB,+} \qquad \mbox{and} \qquad K^\CC_\nu=\widetilde{K}_{-\frac{1}{2}+\nu,\nu}^{\CC,+}, \end{gather*}
which depend holomorphically on $\nu\in\CC$ and are not identically zero, are mapped to identically zero families by \eqref{eq:XnTranslationMap}, i.e., $x_nK^\BB_\nu=x_nK^\CC_\nu=0$ for all $\nu\in\CC$. However, the family $\widetilde{K}^{\AA,+}_{\lambda,\nu}$ which depends holomorphically on $(\lambda,\nu)\in\CC^2$ has generically full support, i.e., $\supp\widetilde{K}^{\AA,+}_{\lambda,\nu}=\RR^n$ for generic $(\lambda,\nu)\in\CC^2$, so it follows from Remark~\ref{rem:PropertiesMultMap} that also $\supp x_n\widetilde{K}^{\AA,+}_{\lambda,\nu}=\RR^n$ for generic $(\lambda,\nu)\in\CC^2$. In particular, the holomorphic family $x_n\widetilde{K}^{\AA,+}_{\lambda,\nu}$ is not identically zero.
\end{Remark}

For the full classification of $\calD'\big(\RR^n\big)_{\lambda,\nu}^-$ we also need to understand the kernel of the map
\begin{gather*} x_n\colon \ \calD'\big(\RR^n\big)_{\lambda,\nu}^-\to\calD'\big(\RR^n\big)_{\lambda+1,\nu}^+. \end{gather*}

\begin{Lemma}\label{lem:SBOsOppositeSignSingular}We have
\begin{gather*} \big\{K\in\calD'\big(\RR^n\big)_{\lambda,\nu}^-\colon x_nK=0\big\} = \{0\}. \end{gather*}
\end{Lemma}

\begin{proof}As in the previous proof any $K$ with $x_nK=0$ must be of the form $K(x)=u_0(x')\delta(x_n)$. Now, if $K\in\calD'\big(\RR^n\big)_{\lambda,\nu}^-$ then by the invariance under $m_0\in M_H$ we have $u(-x)=-u(x)$ and since $\delta(-x_n)=\delta(x_n)$ this implies
\begin{gather*} u_0(-x')=-u_0(x'). \end{gather*}
On the other hand, invariance of $K$ under $\upO(n-1)\subseteq M_H$ implies $u_0(mx')=u_0(x')$ for all $m\in\upO(n-1)$, in particular for $m=-\1$, hence
\begin{gather*} u_0(-x')=u_0(x'). \end{gather*}
This shows that $u_0=0$ and the proof is complete.
\end{proof}

To state the analogue of Theorem~\ref{thm:SBOsSpherical} for $K^{\AA,-}_{\lambda,\nu}(x)$ let
\begin{gather*}
\LocalPoles^- = \big\{(\lambda,\nu)\colon \lambda+\nu=-\tfrac{3}{2}-2k,k\in\NN\big\},\\
\GlobalPoles^- = \big\{(\lambda,\nu)\colon \lambda-\nu=-\tfrac{3}{2}-2\ell,\ell\in\NN\big\},\\
L_\odd = \big\{(\lambda,\nu)=(-\rho-i-1,-\rho_H-j)\colon i,j\in\NN,i-j\in2\NN\big\} \subseteq \GlobalPoles^-.
\end{gather*}
For $(\lambda,\nu)\in\LocalPoles^-$ with $\lambda+\nu=-\frac{3}{2}-2k$ we further define
\begin{align*}
\widetilde{K}^{\BB,-}_{\lambda,\nu}(x) &= \frac{1}{\Gamma\big(\frac{1}{2}\big(\lambda-\nu+\frac{3}{2}\big)\big)}\big(|x'|^2+x_n^2\big)^{-\nu-\frac{n-1}{2}}\delta^{(2k+1)}(x_n)\\
&= \sum_{i=0}^k\frac{(-1)^i(2k+1)!\big(\nu+\frac{n-1}{2}\big)_i}{i!(2k-2i+1)!\Gamma\big(\frac{1}{2}\big(\lambda-\nu+\frac{3}{2}\big)\big)}|x'|^{1-n-2\nu-2i}\delta^{(2k-2i+1)}(x_n),
\end{align*}
and for $(\lambda,\nu)\in\GlobalPoles^-$ with $\lambda-\nu=-\frac{3}{2}-2\ell$ we put
\begin{gather*}
\widetilde{K}^{\CC,-}_{\lambda,\nu}(x) = \sum_{j=0}^\ell\frac{2^{2\ell-2j}(\nu-\ell)_{\ell-j}}{j!(2\ell-2j+1)!}\big(\Delta_{\RR^{n-1}}^j\partial_n^{2\ell-2j+1}\delta\big)(x).
\end{gather*}

\begin{Theorem}\label{thm:SBOsOppositeSign}
The renormalized distribution
\begin{gather*} \widetilde{K}^{\AA,-}_{\lambda,\nu}(x) = \frac{K^{\AA,-}_{\lambda,\nu}(x)}{\Gamma\big(\frac{1}{2}\big(\lambda+\nu+\frac{3}{2}\big)\big)\Gamma\big(\frac{1}{2}\big(\lambda-\nu+\frac{3}{2}\big)\big)} \end{gather*}
depends holomorphically on $(\lambda,\nu)\in\CC^2$ and vanishes only for $(\lambda,\nu)\in L_\odd$. More precisely,
\begin{enumerate}\itemsep=0pt
\item[$1.$] For $(\lambda,\nu)\in\CC^2\setminus(\LocalPoles^-\cup\GlobalPoles^-)$ we have $\supp\widetilde{K}^{\AA,-}_{\lambda,\nu}=\RR^n$.
\item[$2.$] For $(\lambda,\nu)\in\LocalPoles^-\setminus\GlobalPoles^-$ with $\lambda+\nu=-\frac{3}{2}-2k$, $k\in\NN$, we have
\begin{gather*} \widetilde{K}^{\AA,-}_{\lambda,\nu}(x) = \frac{(-1)^{k+1}k!}{(2k+1)!}\widetilde{K}^{\BB,-}_{\lambda,\nu}(x). \end{gather*}
In particular, $\supp\widetilde{K}^{\AA,-}_{\lambda,\nu}=\RR^{n-1}$.
\item[$3.$] For $(\lambda,\nu)\in\GlobalPoles^-\setminus L_\odd$ with $\lambda-\nu=-\frac{3}{2}-2\ell$, $\ell\in\NN$, we have
\begin{gather*} \widetilde{K}^{\AA,-}_{\lambda,\nu}(x) = \frac{(-1)^{\ell+1}\ell!\pi^{\frac{n-1}{2}}}{2^{2\ell}\Gamma\big(\nu+\frac{n-1}{2}\big)}\widetilde{K}^{\CC,-}_{\lambda,\nu}(x). \end{gather*}
In particular, $\supp\widetilde{K}^{\AA,-}_{\lambda,\nu}=\{0\}$.
\item[$4.$]For $(-\rho-i,-\rho_H-j)\in L_\odd$, $2\ell=i-j-1$, restricting the function $(\lambda,\nu)\mapsto\widetilde{K}^{\AA,-}_{\lambda,\nu}$ to two different complex hyperplanes in $\CC^2$ through $(-\rho-i,-\rho_H-j)$ and renormalizing gives two holomorphic families of distributions:
\begin{gather*}
\widetilde{K}^{\CC,-}_{\lambda,\nu}(x) = \frac{(-1)^\ell 2^{2\ell}}{\ell!\pi^{\frac{n-1}{2}}}\Gamma(\nu+\rho_H)\widetilde{K}^{\AA,-}_{\lambda,\nu}(x), \qquad \lambda-\nu=-\tfrac{3}{2}-2\ell,\\
\doublewidetilde{K}^{\AA,-}_{\lambda,\nu}(x) = \Gamma\big(\tfrac{1}{2}\big(\lambda-\nu+\tfrac{3}{2}\big)\big)\widetilde{K}^{\AA,-}_{\lambda,\nu}(x), \qquad \nu=-\rho_H-j.
\end{gather*}
Their special values at $(-\rho-i,-\rho_H-j)$ satisfy
\begin{gather*}
\supp\widetilde{K}^{\CC,-}_{-\rho-i,-\rho_H-j} = \{0\},\\
\supp\doublewidetilde{K}^{\AA,-}_{-\rho-i,-\rho_H-j} = \begin{cases}\RR^n,&\mbox{for $n$ even,}\\\RR^{n-1},&\mbox{for $n$ odd,}\end{cases}
\end{gather*}
more precisely, for $n$ odd and $2k=n+i+j-2$
\begin{gather*}
\doublewidetilde{K}^{\AA,-}_{\lambda,\nu}(x) = \frac{(-1)^{k+1}k!}{(2k+1)!}\big(|x'|^2+x_n^2\big)^j\delta^{(2k+1)}(x_n).
\end{gather*}
\end{enumerate}
\end{Theorem}

\begin{proof}We first apply the map
\begin{gather*} x_n\colon \ \calD'\big(\RR^n\big)_{\lambda-1,\nu}^+ \to \calD'\big(\RR^n\big)_{\lambda,\nu}^- \end{gather*}
to the holomorphic family $\widetilde{K}^{\AA,+}_{\lambda-1,\nu}$ and obtain a holomorphic family of distributions
\begin{gather*} x_n\widetilde{K}^{\AA,+}_{\lambda-1,\nu}(x) = \frac{K^{\AA,-}_{\lambda,\nu}(x)}{\Gamma\big(\frac{1}{2}\big(\lambda+\nu-\frac{1}{2}\big)\big)\Gamma\big(\frac{1}{2}\big(\lambda-\nu-\frac{1}{2}\big)\big)}. \end{gather*}
By Theorem~\ref{thm:SBOsSpherical} and Lemma~\ref{lem:SBOsSameSignSingular} we have $x_n\widetilde{K}^{\AA,+}_{\lambda-1,\nu}=0$ if $\lambda+\nu=\frac{1}{2}$ or $\lambda-\nu=\frac{1}{2}$. We can therefore renormalize the kernels
\begin{gather*} \widetilde{K}^{\AA,-}_{\lambda,\nu}(x) = \frac{x_n\widetilde{K}^{\AA,+}_{\lambda-1,\nu}(x)}{\frac{1}{2}\big(\lambda+\nu-\frac{1}{2}\big)\cdot\frac{1}{2}\big(\lambda-\nu-\frac{1}{2}\big)} = \frac{K^{\AA,-}_{\lambda,\nu}(x)}{\Gamma\big(\frac{1}{2}\big(\lambda+\nu+\frac{3}{2}\big)\big)\Gamma\big(\frac{1}{2}\big(\lambda-\nu+\frac{3}{2}\big)\big)}. \end{gather*}
This shows that $\widetilde{K}^{\AA,-}_{\lambda,\nu}(x)$ depends holomorphically on $(\lambda,\nu)\in\CC^2$. Next, we apply the map
\begin{gather}\label{eq:SBOsOppositeSignInjection}
x_n\colon \ \calD'\big(\RR^n\big)_{\lambda,\nu}^- \to \calD'\big(\RR^n\big)_{\lambda+1,\nu}^+
\end{gather}
to $\widetilde{K}^{\AA,-}_{\lambda,\nu}(x)$ and find
\begin{gather*} x_n\widetilde{K}^{\AA,-}_{\lambda,\nu} = \widetilde{K}^{\AA,+}_{\lambda+1,\nu}. \end{gather*}
By Lemma~\ref{lem:SBOsOppositeSignSingular} the map \eqref{eq:SBOsOppositeSignInjection} is injective, hence $\widetilde{K}^{\AA,-}_{\lambda,\nu}=0$ if and only if $\widetilde{K}^{\AA,+}_{\lambda+1,\nu}=0$, which is only the case for $(\lambda+1,\nu)\in L_\even$, i.e., $(\lambda,\nu)\in L_\odd$.
\begin{enumerate}\itemsep=0pt
\item Let $(\lambda,\nu)\in\CC^2\setminus(\LocalPoles^-\cup\GlobalPoles^-)$. Then $(\lambda+1,\nu)\in\CC^2\setminus(\LocalPoles^+\cup\GlobalPoles^+)$ and hence $\supp\big(x_n\widetilde{K}^{\AA,-}_{\lambda,\nu}\big)=\supp\big(\widetilde{K}^{\AA,+}_{\lambda+1,\nu}\big)=\RR^n$. This implies $\supp\big(\widetilde{K}^{\AA,-}_{\lambda,\nu}\big)=\RR^n$.
\item Let $(\lambda,\nu)\in\LocalPoles^-\setminus\GlobalPoles^-$ with $\lambda+\nu=-\frac{3}{2}-2k$, then it is easy to see, using $x_n\delta^{(m)}(x_n)=-m\delta^{(m-1)}(x_n)$, that
\begin{gather*} x_n\widetilde{K}^{\BB,+}_{\lambda-1,\nu} = 2(k+1)(\nu+k+1)\widetilde{K}^{\BB,-}_{\lambda,\nu}. \end{gather*}
Hence,
\begin{align*}
\widetilde{K}^{\AA,-}_{\lambda,\nu} &= \frac{1}{\frac{1}{2}\big(\lambda+\nu-\frac{1}{2}\big)\cdot\frac{1}{2}\big(\lambda-\nu-\frac{1}{2}\big)}x_n\widetilde{K}^{\AA,+}_{\lambda-1,\nu}\\
&= \frac{1}{(k+1)(\nu+k+1)}\frac{(-1)^{k+1}(k+1)!}{(2k+2)!}x_n\widetilde{K}^{\BB,+}_{\lambda-1,\nu} = \frac{(-1)^{k+1}k!}{(2k+1)!}\widetilde{K}^{\BB,-}_{\lambda,\nu}.
\end{align*}
\item For $(\lambda,\nu)\in\GlobalPoles^-\setminus L_\odd$ the same method as in (2) applies to show (3). Here we use that $x_n\widetilde{K}^{\CC,+}_{\lambda-1,\nu}=-4(\nu-\ell-1)\widetilde{K}^{\CC,-}_{\lambda,\nu}$ for $\lambda-\nu=-\frac{3}{2}-2\ell$.
\item Now let $(-\rho-i,-\rho_H-j)\in L_\odd$. The first statement about $\widetilde{K}^{\CC,-}_{\lambda,\nu}(x)$ follows immediately from (3). For the second statement note that $(-\rho-(i+1),-\rho_H-j)\in L_\even$ and we can use Theorem~\ref{thm:SBOsSpherical}(4). Restricting to $\nu=-\rho_H-j$ we have
\begin{align*}
\doublewidetilde{K}^{\AA,-}_{\lambda,\nu}(x) &= \Gamma\big(\tfrac{1}{2}\big(\lambda-\nu+\tfrac{3}{2}\big)\big)\widetilde{K}^{\AA,-}_{\lambda,\nu}(x)\\
&= \frac{\Gamma\big(\tfrac{1}{2}\big(\lambda-\nu+\tfrac{3}{2}\big)\big)x_n\widetilde{K}^{\AA,+}_{\lambda-1,\nu}(x)}{\frac{1}{2}\big(\lambda+\nu-\frac{1}{2}\big)\cdot\frac{1}{2} \big(\lambda-\nu-\frac{1}{2}\big)} = \frac{x_n\doublewidetilde{K}^{\AA,+}_{\lambda-1,\nu}(x)}{\frac{1}{2}\big(\lambda+\nu-\frac{1}{2}\big)}.
\end{align*}
The denominator is equal to $\tfrac{1}{2}\big(\lambda+\nu-\tfrac{1}{2}\big)=-\frac{1}{2}(n+i+j)=-(k+1)\neq0$, whence $\doublewidetilde{K}^{\AA,-}_{\lambda,\nu}$ depends holomorphically on $\lambda\in\CC$. Moreover, for even $n$ we have $\supp\doublewidetilde{K}^{\AA,+}_{\lambda-1,\nu}=\RR^n$ so that $\supp\big(x_n\doublewidetilde{K}^{\AA,+}_{\lambda-1,\nu}\big)=\RR^n$ and hence $\supp\doublewidetilde{K}^{\AA,-}_{\lambda,\nu}=\RR^n$. For odd $n$ it follows from $x_n\delta^{(m)}(x_n)=-m\delta^{(m-1)}(x_n)$ that
\begin{align*}
\doublewidetilde{K}^{\AA,-}_{\lambda,\nu}(x) &= -\frac{1}{k+1}\cdot\frac{(-1)^{k+1}(k+1)!}{(2k+2)!}x_n\big(|x'|^2+x_n^2\big)^j\delta^{(2k+2)}(x_n)\\
&= \frac{(-1)^{k+1}k!}{(2k+1)!}\big(|x'|^2+x_n^2\big)^j\delta^{(2k+1)}(x_n).\tag*{\qed}
\end{align*}
\end{enumerate}\renewcommand{\qed}{}
\end{proof}

\begin{Remark}The differential intertwining operators belonging to the distribution kernels $\widetilde{K}^{\CC,+}_{\lambda,\nu}$ are the even order conformally covariant differential operators found by Juhl~\cite{Juh09}, sometimes also referred to as \textit{Juhl operators}. Kobayashi--Speh showed that they are obtained as residue families of the non-local intertwining operators with kernels $\widetilde{K}^{\AA,+}_{\lambda,\nu}$. Theorem~\ref{thm:SBOsOppositeSign} proves that also the odd order Juhl operators with integral kernels $\widetilde{K}^{\CC,-}_{\lambda,\nu}$ can be obtained in this way. Further, the translation principle explains the following relation between even and odd order Juhl operators:
\begin{gather*} x_n\widetilde{K}^{\CC,+}_{\lambda,\nu} = -2\big(\lambda+\nu+\tfrac{1}{2}\big)\widetilde{K}^{\CC,-}_{\lambda+1,\nu} \qquad \mbox{and} \qquad x_n\widetilde{K}^{\CC,-}_{\lambda,\nu} = -\widetilde{K}^{\CC,+}_{\lambda+1,\nu}. \end{gather*}
\end{Remark}

\begin{Remark}\label{rem:PolesDisappear}Theorem~\ref{thm:SBOsSpherical} implies that the unnormalized meromorphic family $K_{\lambda,\nu}^{\AA,+}$ has poles for $(\lambda,\nu)\in\LocalPoles^+\cup\GlobalPoles^+$. Multiplication with $x_n$ gives the meromorphic family $x_nK_{\lambda,\nu}^{\AA,+}=K_{\lambda+1,\nu}^{\AA,-}$ which, by Theorem~\ref{thm:SBOsOppositeSign}, has poles for $(\lambda+1,\nu)\in\LocalPoles^-\cup\GlobalPoles^-$. This shows that for $\lambda+\nu=-\frac{1}{2}$ and $\lambda+\nu=-\frac{1}{2}$ the family $K_{\lambda,\nu}^{\AA,+}$ has a pole, while $x_nK_{\lambda,\nu}^{\AA,+}$ does not.
\end{Remark}

Using the translation principle we can finally determine the space $\calD'\big(\RR^n\big)_{\lambda,\nu}^-$ completely:

\begin{Theorem}\label{thm:SBOsOppositeSignClassification}We have
\begin{gather*}
\calD'\big(\RR^n\big)_{\lambda,\nu}^- = \begin{cases}\CC\widetilde{K}^{\AA,-}_{\lambda,\nu},&\mbox{for $(\lambda,\nu)\in\CC^2\setminus L_\odd$,}\\\CC\doublewidetilde{K}^{\AA,-}_{\lambda,\nu}\oplus\CC\widetilde{K}^{\CC,-}_{\lambda,\nu},&\mbox{for $(\lambda,\nu)\in L_\odd$.}\end{cases}
\end{gather*}
\end{Theorem}

\begin{proof}By Lemma~\ref{lem:SBOsOppositeSignSingular} the map
\begin{gather*} x_n\colon \ \calD'\big(\RR^n\big)_{\lambda,\nu}^-\to\calD'\big(\RR^n\big)_{\lambda+1,\nu}^+ \end{gather*}
is injective and hence
\begin{gather}\label{eq:SBOsOppositeSignInequality}
\dim\calD'\big(\RR^n\big)_{\lambda,\nu}^- \leq \dim\calD'\big(\RR^n\big)_{\lambda+1,\nu}^+.
\end{gather}
For $(\lambda,\nu)\in\CC^2\setminus L_\odd$ we have $0\neq\widetilde{K}^{\AA,-}_{\lambda,\nu}\in\calD'\big(\RR^n\big)_{\lambda,\nu}^-$, and hence the left hand side of~\eqref{eq:SBOsOppositeSignInequality} is $\geq1$. For $(\lambda,\nu)\in L_\odd$ we have $\doublewidetilde{K}^{\AA,-}_{\lambda,\nu},\widetilde{K}^{\CC,-}_{\lambda,\nu}\in\calD'\big(\RR^n\big)_{\lambda,\nu}^-$ and these distributions are linearly independent since
\begin{gather*} \supp\widetilde{K}^{\CC,-}_{\lambda,\nu} = \{0\} \subsetneq \supp\doublewidetilde{K}^{\AA,-}_{\lambda,\nu}. \end{gather*}
This shows that the left hand side of~\eqref{eq:SBOsOppositeSignInequality} is $\geq2$. Now note that $(\lambda,\nu)\in L_\odd$ if and only if $(\lambda+1,\nu)\in L_\even$, and therefore~\eqref{eq:SBOsOppositeSignInequality} is actually an equality for all $(\lambda,\nu)\in\CC^2$, thanks to Theorem~\ref{thm:SBOsSphericalClassification}. This finishes the proof.
\end{proof}

\section[Example: $(\widetilde{G},\widetilde{H})=(\Pin(n+1,1),\Pin(n,1))$]{Example: $\boldsymbol{(\widetilde{G},\widetilde{H})=(\Pin(n+1,1),\Pin(n,1))}$}\label{sec:ExSpinors}

We explicitly construct intertwining operators between principal series representations of $\widetilde{G}=\Pin(n+1,1)$ and $\widetilde{H}=\Pin(n,1)$ induced from fundamental spin representations of $\widetilde{M}\simeq\Pin(n)\times\ZZ/2\ZZ$ and $\widetilde{M}_H\simeq\Pin(n-1)\times\ZZ/2\ZZ$ using the translation principle.

\subsection{Parabolic subgroups and the symmetric pair}

We consider the two-fold covering
\begin{gather*} q\colon \ \widetilde{G}=\Pin(n+1,1)\to\upO(n+1,1)=G, \end{gather*}
realized inside the Clifford algebra $\Cl(n+1,1)$ of $\RR^{n+1,1}$. For details on the definition and properties of the pin groups and Clifford algebras we refer the reader to Appendix~\ref{app:CliffordAlgebrasPinGroups}. In what follows we will write $\widetilde{S}=q^{-1}(S)$ for any subgroup $S\subseteq G$.

We choose the same parabolic subgroup $P=MAN\subseteq G$ as in Section~\ref{sec:ExScalarGroups}, then $\widetilde{M}=\Pin(n)\cdot\{1,m_0=e_1e_{n+2}\}$, where the embedding of $\Pin(n)$ into $\Pin(n+1,1)$ is the restriction of the embedding of Clifford algebras $\Cl(n)\hookrightarrow\Cl(n+1,1)$ induced by
\begin{gather*} \RR^n\hookrightarrow\RR^{n+1,1}, \qquad e_i\mapsto e_{i+1}, \qquad 1\leq i\leq n. \end{gather*}
We \looseness=-1 note that $m_0=e_1e_{n+2}$ commutes with $\Pin(n)$ and hence $\widetilde{M}\simeq\Pin(n)\times\ZZ/2\ZZ$. Both~$A$ and~$N$ split in the cover, so that $\widetilde{A}\simeq A\times\ZZ/2\ZZ$ and $\widetilde{N}\simeq N\times\ZZ/2\ZZ$, and we identify $A$ and $N$ with $\widetilde{A}_0,\widetilde{N}_0\subseteq\widetilde{G}$. Then $\widetilde{P}=\widetilde{M}AN$ is the Langlands decomposition of the parabolic subgroup~$\widetilde{P}$ of~$\widetilde{G}$.

For the Weyl group element $w_0$ we choose the representative
\begin{gather}
\tilde{w}_0 = \pm e_{n+2}\label{eq:SgnOfw_0}
\end{gather}
with the sign yet to be determined. Since $q(\tilde{w}_0)=\diag(1,\ldots,1,-1)$, the element $\tilde{w}_0$ corresponds to the representative chosen in Section~\ref{sec:ExScalarGroups} for the group $G$. Hence, by Lemma~\ref{lem:AMprojection} we find
\begin{gather*} q\big(m\big(\tilde{w}_0^{-1}\overline{n}_x^{-1}\big)^{-1}\big) = \left(\begin{matrix}1&&\\&\1_n-2\frac{xx^\top}{|x|^2}&\\&&1\end{matrix}\right), \qquad x\in\RR^n\setminus\{0\}. \end{gather*}
The matrix $\1_n-2\frac{xx^\top}{|x|^2}\in\upO(n)$ is the orthogonal reflection in $\RR^n$ at the hyperplane orthogonal to $x$, and the corresponding elements in the cover $\Pin(n)\subseteq\Cl(n)$ are $\pm\frac{x}{|x|}$. Therefore, we may choose the sign in~\eqref{eq:SgnOfw_0} so that under the identification $\widetilde{M}\simeq\Pin(n)\times\ZZ/2\ZZ$ we have
\begin{gather*} m\big(\tilde{w}_0^{-1}\overline{n}_x^{-1}\big)^{-1} = -\tfrac{x}{|x|} \in \Pin(n) \subseteq \widetilde{M}. \end{gather*}
Further we note that
\begin{gather*} \tilde{w}_0m\tilde{w}_0^{-1} = \alpha(m) = \det(m)m, \qquad m\in\Pin(n), \qquad \tilde{w}_0m_0\tilde{w}_0^{-1} = -m_0, \end{gather*}
where $\alpha$ denotes the canonical automorphism of the Clifford algebra $\Cl(n)$ and $\det\colon \Pin(n)\to\upO(n)\to\{\pm1\}$ the determinant character of $\Pin(n)$.

The symmetric pair $(G,H)=(\upO(n+1,1),\upO(n,1))$ as introduced in Section~\ref{sec:ExScalarGroups} takes in the cover the form $\big(\widetilde{G},\widetilde{H}\big)=(\Pin(n+1,1),\Pin(n,1))$. We have
\begin{gather*} \widetilde{H} = \widetilde{G}\cap\Cl(n,1), \end{gather*}
where the embedding of $\Cl(n,1)$ into $\Cl(n+1,1)$ is induced by the embedding
\begin{gather*} \RR^{n,1}\hookrightarrow\RR^{n+1,1}, \qquad x\mapsto(x_1,\ldots,x_n,0,x_{n+1}). \end{gather*}
Further, $\widetilde{P}_H=\widetilde{P}\cap\widetilde{H}=\widetilde{M}_HAN_H$ is a parabolic subgroup of $\widetilde{H}$ with $\widetilde{M}_H=\Pin(n-1)\cdot\{1,m_0\}$.

\subsection{Principal series representations and symmetry breaking operators}

For the fundamental spin representations of $\Pin(n)$ we use the notation introduced in Appendix~\ref{app:CliffordModules}. The group $\Pin(n)$ has for even $n$ one fundamental spin representation, and for odd~$n$ two fundamental spin representations. For~$n$ even we have $\zeta_n\otimes\det\simeq\zeta_n$ for the fundamental spin representation $\zeta_n$, and for $n$ odd and~$\zeta_n$ any fundamental spin representation of $\Pin(n)$ the representations $\zeta_n$ and $\zeta_n\otimes\det$ are non-equivalent.

Denote by $\sgn\colon \{\1,m_0\}\to\{\pm1\}$ the non-trivial character of the two-element group $\{\1,m_0\}$. Then for $\delta\in\ZZ/2\ZZ$ and a fundamental spin representation $\zeta_n$ of $\Pin(n)$ we consider the representation $\zeta_n\otimes\sgn^\delta$ of $\widetilde{M}=\Pin(n)\times\{\1,m_0\}$. Note that
\begin{gather}
\tilde{w}_0\big(\zeta_n\otimes\sgn^\delta\big) = [\zeta_n\otimes\det]\otimes\sgn^{1-\delta}.\label{eq:ActionOfw_0OnSpinReps}
\end{gather}

For $(\zeta_n,\SS_n)$ a fundamental spin representation of $\Pin(n)$, $\delta\in\ZZ/2\ZZ$ and $\lambda\in\fraka_\CC^*\simeq\CC$ we form the principal series representations
\begin{gather*} \pislash_{\lambda,\delta} = \Ind_{\widetilde{P}}^{\widetilde{G}}\big(\big(\zeta_n\otimes\sgn^\delta\big)\otimes e^\lambda\otimes\1\big). \end{gather*}

Similarly, for $(\zeta_{n-1},\SS_{n-1})$ a fundamental spin representation of $\Pin(n-1)$, $\delta\in\ZZ/2\ZZ$ and $\nu\in\fraka_{H,\CC}^*\simeq\CC$ we form principal series representations of $\widetilde{H}$:
\begin{gather*} \tauslash_{\nu,\varepsilon} = \Ind_{\widetilde{P}_H}^{\widetilde{H}}\big(\big(\zeta_{n-1}\otimes\sgn^\varepsilon\big)\otimes e^\nu\otimes\1\big). \end{gather*}
The main result of this section is the construction of symmetry breaking operators
\begin{gather}
A\in\Hom_{\widetilde{H}}\big(\pislash_{\lambda,\delta}|_{\widetilde{H}},\tauslash_{\nu,\varepsilon}\big).\label{eq:SBObetweenSpinors}
\end{gather}
In Section~\ref{sec:CompactPictureSBOsSpinors} we then show that this construction actually gives a full classification of symmetry breaking operators between spinor-valued principal series.

By Theorem~\ref{thm:DistributionKernelOnNbar} every intertwining operator in \eqref{eq:SBObetweenSpinors} is uniquely determined by its distribution kernel $K\in\calD'\big(\RR^n;\Hom_\CC(\SS_n,\SS_{n-1})\big)$. As explained in Section~\ref{sec:ExScalarPrincipalSeriesSBOs}, the space of distribution kernels describing intertwining operators in \eqref{eq:SBObetweenSpinors} only depends on $(\lambda,\nu)\in\CC^2$ and the parity of $\delta+\varepsilon$. We therefore denote by
\begin{gather*} \calD'\big(\RR^n;\Hom_\CC(\SS_n,\SS_{n-1})\big)_{\lambda,\nu}^\pm \subseteq \calD'\big(\RR^n;\Hom_\CC(\SS_n,\SS_{n-1})\big) \end{gather*}
the corresponding space of distribution kernels of intertwining operators, where the sign $+$ describes kernels for $\delta+\varepsilon\equiv0(2)$ and the sign $-$ describes kernels for $\delta+\varepsilon\equiv1(2)$.

\subsection{Construction of symmetry breaking operators}\label{sec:ConstructionSBOsSpinors}

We extend the representation $(\zeta_n,\SS_n)$ of $\Pin(n)$ to a representation of the Clifford algebra $\Cl(n;\CC)$ and write $(\zeta,\SS)=(\zeta_n,\SS_n)$ for short (see Appendix~\ref{app:CliffordModules}). Consider the representation $\tau'=[\zeta\otimes\det]\otimes\1$ of $\widetilde{M}\simeq\Pin(n)\times\ZZ/2\ZZ$. By Lemma~\ref{lem:ExistenceOfEnoughGreps} there exists a representation $(\tau,E)$ of $\widetilde{G}=\Pin(n+1,1)$ containing a subspace $E'\subseteq E$ invariant under $\widetilde{M}A\overline{N}$ such that $E'\simeq\tau'\otimes e^{\mu'}\otimes\1$ or $E'\simeq\tilde{w}_0\tau'\otimes e^{\mu'}\otimes\1$ for some $\mu'\in\fraka^*$. By possibly replacing $(\tau,E)$ by its twist $(\tau\circ\theta,E)$ by the Cartan involution $\theta$ we may assume that $E'\simeq\tau'\otimes e^{\mu'}\otimes\1$. It is easy to see that $\mu'=-\frac{1}{2}\alpha$. By~\eqref{eq:ActionOfw_0OnSpinReps} we have
\begin{gather*} w_0\big(\tau'\otimes e^{\mu'}\otimes\1\big) \simeq (\zeta\otimes\sgn)\otimes e^{-\mu'}\otimes\1. \end{gather*}
Further, using Lemma~\ref{lem:AMprojection} we have the following expression for the translation kernel:
\begin{gather*} \tau'\big(m\big(\tilde{w}_0^{-1}\overline{n}_x^{-1}\big)^{-1}\big)\cdot a\big(\tilde{w}_0^{-1}\overline{n}_x^{-1}\big)^{-\mu'} = [\zeta\otimes\det]\big({-}\tfrac{x}{|x|}\big)\cdot|x|^{-2\mu'} = \zeta(x). \end{gather*}
Then Corollary~\ref{cor:KnappSteinTranslationPrinciple} gives linear maps
\begin{gather*} \Hom_H\big(\pi_{\lambda,\delta}|_H,\tau_{\nu,\varepsilon}\big) \to \Hom_{\widetilde{H}}\big(\pislash_{\lambda+\frac{1}{2},1-\delta},\Ind_{\widetilde{P}_H}^{\widetilde{H}}\big(\big([\zeta\otimes\det]\otimes\sgn^\varepsilon\big)\otimes e^{\nu-\frac{1}{2}}\otimes\1\big)\big), \end{gather*}
which are on the level of integral kernels given by
\begin{gather}
\calD'\big(\RR^n\big)\to\calD'\big(\RR^n;\End_\CC(\SS)\big), \qquad K(x)\mapsto\zeta(x)\cdot K(x).\label{eq:SpinorTranslationMap}
\end{gather}
To obtain intertwining operators into $\tauslash_{\nu,\varepsilon}$ note that $\zeta_{n{-}1}$ occurs in the restriction $[\zeta_n\!{\otimes}\!\det]|_{\Pin(n{-}1)}\!$ with multiplicity one, and fix a projection $P\colon [\zeta_n\otimes\det]|_{\Pin(n-1)}\to\zeta_{n-1}$. Then we have a linear map
\begin{gather*} \Hom_H\big(\pi_{\lambda,\delta}|_H,\tau_{\nu,\varepsilon}\big) \to \Hom_{\widetilde{H}}\big(\pislash_{\lambda+\frac{1}{2},1-\delta},\tauslash_{\nu-\frac{1}{2},\varepsilon}\big), \end{gather*}
which is on the level of integral kernels given by
\begin{gather*} \calD'\big(\RR^n\big)_{\lambda,\nu}^\pm\to\calD'\big(\RR^n;\Hom_\CC(\SS_n,\SS_{n-1})\big)_{\lambda+\frac{1}{2},\nu-\frac{1}{2}}^\mp, \qquad K(x)\mapsto(P\zeta(x))\cdot K(x). \end{gather*}

By Theorems~\ref{thm:SBOsSphericalClassification} and \ref{thm:SBOsOppositeSignClassification} all symmetry breaking operators in $\Hom_H(\pi_{\lambda,\delta}|_H,\tau_{\nu,\varepsilon})$ are obtained from the meromorphic family of kernels $K^{\AA,\pm}_{\lambda,\nu}\in\calD'\big(\RR^n\big)_{\lambda,\nu}^\pm$. We therefore consider the $\Hom_\CC(\SS_n,\SS_{n-1})$-valued kernels
\begin{gather*} P\Kslash^{\AA,\pm}_{\lambda,\nu}(x) = (P\zeta(x))\cdot K^{\AA,\mp}_{\lambda-\frac{1}{2},\nu+\frac{1}{2}}(x). \end{gather*}

For $n$ odd the restriction of the $\Pin(n)$-representation $\zeta_n\otimes\det$ to $\Pin(n-1)$ is isomorphic to $\zeta_{n-1}$, so the map $P$ is an isomorphism. Therefore, we can as well study the $\End_\CC(\SS)$-valued distributions
\begin{gather*} \Kslash_{\lambda,\nu}^{\AA,\pm}(x) = \zeta(x)\cdot K_{\lambda-\frac{1}{2},\nu+\frac{1}{2}}^{\AA,\mp}(x). \end{gather*}

For $n$ even the restriction of the $\Pin(n)$-representation $\zeta_n\otimes\det$ to $\Pin(n-1)$ is isomorphic to the direct sum of $\zeta_{n-1}$ and $\zeta_{n-1}\otimes\det$. The next result shows that the poles of $P\Kslash_{\lambda,\nu}^{\AA,\pm}\in\calD'\big(\RR^n;\Hom_\CC(\SS_n,\SS_{n-1})\big)$ and $\Kslash_{\lambda,\nu}^{\AA,\pm}\in\calD'\big(\RR^n;\End_\CC(\SS)\big)$ agree and that the residues of $P\Kslash_{\lambda,\nu}^{\AA,\pm}$ agree with the composition of the residues of $\Kslash_{\lambda,\nu}^{\AA,\pm}$ with $P$. Note that
\begin{gather*} \Kslash_{\lambda,\nu}^{\AA,\pm}(x) = \sum_{i=1}^n \big(x_i\cdot K_{\lambda-\frac{1}{2},\nu+\frac{1}{2}}^{\AA,\mp}(x)\big)\underline{e_i}, \end{gather*}
where $\underline{e_i}=\zeta_n(e_i)\in\End_\CC(\SS_n)$.

\begin{Proposition}\label{prop:ProjOntoIrrSummands}Assume $n$ is even and let $u\in\calD'\big(\RR^n;\End_\CC(\SS_n)\big)$ be a distribution of the form
\begin{gather*} u(x) = \sum_{i=1}^n u_i(x)\underline{e_i} \end{gather*}
with $u_i\in\calD'\big(\RR^n\big)$. Then $\supp u=\supp Pu$, in particular $Pu=0$ if and only if $u=0$.
\end{Proposition}

\begin{proof}
We have
\begin{gather*} (Pu)(x) = \sum_{i=1}^n u_i(x)\cdot(P\circ\underline{e_i}). \end{gather*}
We claim that the operators $P\circ\underline{e_1},\ldots,P\circ\underline{e_n}$ are linearly independent, then the statement follows.

Decompose $\zeta_n\otimes\det$ into irreducible $\Pin(n-1)$-representations $\SS_n=\SS_n^1\oplus\SS_n^2$, where $P\colon \SS_n^1\to\SS_{n-1}$ is an isomorphism and $\SS_n^2=\ker P$. Now note that for $1\leq i\leq n-1$ we have
\begin{gather*} P\circ\underline{e_i} = P\circ\zeta_n(e_i) = -P\circ[\zeta_n\otimes\det](e_i) = -\zeta_{n-1}(e_i)\circ P \end{gather*}
and therefore $P\circ\underline{e_i}$ vanishes on $\SS_n^2$. On the other hand, $\underline{e_n}=\zeta_n(e_n)$ maps $\SS_n^1$ to $\SS_n^2$, so that $P\circ\underline{e_n}$ vanishes on $\SS_n^1$. Now assume that
\begin{gather*} \sum_{i=1}^n\lambda_i P\circ\underline{e_i} = 0. \end{gather*}
Restricting to $\SS_n^1$ this implies
\begin{gather*} \sum_{i=1}^{n-1}\lambda_i \zeta_{n-1}(e_i) = 0 \end{gather*}
and hence $\lambda_1=\dots=\lambda_{n-1}=0$ since $\zeta_{n-1}(e_1),\ldots,\zeta_{n-1}(e_{n-1})$ are linearly independent. Then also $\lambda_n=0$ and the proof is complete.
\end{proof}

To find the right normalization making $\Kslash^{\AA,\pm}_{\lambda,\nu}(x)$ holomorphically dependent on $(\lambda,\nu)\in\CC^2$ we first study the kernel in $\calD'\big(\RR^n\big)_{\lambda,\nu}^\pm$ of the map~\eqref{eq:SpinorTranslationMap}. Note that $\zeta(x)=\sum_{i=1}^nx_i\underline{e_i}$, and the operators $\underline{e_1},\ldots,\underline{e_n}\in\End_\CC(\SS)$ are linearly independent. Hence, for any distribution $u\in\calD'\big(\RR^n\big)$ we have
\begin{gather*} \zeta\cdot u = 0 \quad \Leftrightarrow \quad x_iu = 0, \qquad 1\leq i\leq n
\end{gather*}
and
\begin{gather}
\supp(\zeta\cdot u) = \bigcup_{i=1}^n\supp(x_iu).\label{eq:SupportOfKslash}
\end{gather}

\begin{Lemma}\label{lem:SBOsSpinorTranslationKernel}
For $(\lambda,\nu)\in\CC^2$ we have
\begin{gather*}
\big\{K\in\calD'\big(\RR^n\big)_{\lambda,\nu}^+\colon \zeta\cdot K=0\big\} = \begin{cases}\CC\widetilde{K}^{\CC,+}_{\lambda,\nu}=\CC\delta,&\mbox{for $\lambda-\nu=-\frac{1}{2}$}, \\ \{0\},&\mbox{else},\end{cases} \\
\big\{K\in\calD'\big(\RR^n\big)_{\lambda,\nu}^-\colon \zeta\cdot K=0\big\} = \{0\}.
\end{gather*}
\end{Lemma}

\begin{proof}It is easy to see that multiples of $K=\delta$ are the only distributions with $x_iK=0$ for all $1\leq i\leq n$. Then the claim follows from Theorems~\ref{thm:SBOsSpherical}, \ref{thm:SBOsSphericalClassification}, \ref{thm:SBOsOppositeSign} and~\ref{thm:SBOsOppositeSignClassification}.
\end{proof}

To state the analogues of Theorems~\ref{thm:SBOsSpherical} and~\ref{thm:SBOsOppositeSign} for $\Kslash^{\AA,\pm}_{\lambda,\nu}(x)$ we write $\dirac_{\RR^{n-1}}$ and $\dirac_n$ for the operators
\begin{gather*} \dirac_{\RR^{n-1}} = \sum_{i=1}^{n-1} \underline{e_i}\frac{\partial}{\partial x_i}, \qquad \dirac_n = \underline{e_n}\frac{\partial}{\partial x_n}. \end{gather*}

\subsubsection[The distributions $\Kslash^{\AA,+}_{\lambda,\nu}(x)$]{The distributions $\boldsymbol{\Kslash^{\AA,+}_{\lambda,\nu}(x)}$}

Let
\begin{gather*}
\underline{\LocalPoles}^+ = \big\{(\lambda,\nu)\colon \lambda+\nu=-\tfrac{3}{2}-2k,k\in\NN\big\},\\
\underline{\GlobalPoles}^+ = \big\{(\lambda,\nu)\colon \lambda-\nu=-\tfrac{1}{2}-2\ell,k\in\NN\big\},\\
\Lslash_\even = \big\{\big({-}\rho-\tfrac{1}{2}-i,-\rho_H-\tfrac{1}{2}-j\big)\colon i,j\in\NN,i-j\in2\NN\big\}\subseteq\underline{\GlobalPoles}^+.
\end{gather*}

For $(\lambda,\nu)\in\underline{\LocalPoles}^+$ with $\lambda+\nu=-\frac{3}{2}-2k$ we further define
\begin{gather*}
\widetilde{\Kslash}^{\BB,+}_{\lambda,\nu}(x) = \frac{1}{\Gamma\big(\frac{1}{2}\big(\lambda-\nu+\frac{1}{2}\big)\big)}\big(\zeta(x')\big(|x'|^2+x_n^2\big)^{-\nu-\frac{n}{2}}\delta^{(2k+1)}(x_n)\\
\hphantom{\widetilde{\Kslash}^{\BB,+}_{\lambda,\nu}(x) =}{}-(2k+1)\big(|x'|^2+x_n^2\big)^{-\nu-\frac{n}{2}}\dirac_n\delta^{(2k-1)}(x_n)\big),
\end{gather*}
and for $(\lambda,\nu)\in\underline{\GlobalPoles}^+$ with $\lambda-\nu=-\frac{1}{2}-2\ell$ we put
\begin{gather*}
\widetilde{\Kslash}^{\CC,+}_{\lambda,\nu}(x) = \sum_{j=0}^{\ell-1}\frac{2^{2\ell-2j-1}\big(\nu+\frac{1}{2}-\ell\big)_{\ell-j-1}}{j!(2\ell-2j-1)!}\big(\Delta_{\RR^{n-1}}^j\partial_n^{2\ell-2j-1}\dirac_{\RR^{n-1}}\delta\big)(x)\\
\hphantom{\widetilde{\Kslash}^{\CC,+}_{\lambda,\nu}(x) = }{} +\sum_{j=0}^\ell\frac{2^{2\ell-2j}\big(\nu+\frac{1}{2}-\ell\big)_{\ell-j}}{j!(2\ell-2j)!}\big(\Delta_{\RR^{n-1}}^j\partial_n^{2\ell-2j-1}\dirac_n\delta\big)(x).
\end{gather*}
Note that both $\widetilde{\Kslash}^{\BB,+}_{\lambda,\nu}$ and $\widetilde{\Kslash}^{\CC,+}_{\lambda,\nu}$ depend holomorphically on $\lambda\in\CC$ (or $\nu\in\CC$).

\begin{Theorem}\label{thm:SBOsSpinors+}The renormalized distribution
\begin{gather*} \widetilde{\Kslash}^{\AA,+}_{\lambda,\nu}(x) = \frac{\Kslash^{\AA,+}_{\lambda,\nu}(x)}{\Gamma\big(\frac{1}{2}\big(\lambda+\nu+\frac{3}{2}\big)\big)\Gamma\big(\frac{1}{2}\big(\lambda-\nu+\frac{1}{2}\big)\big)} \end{gather*}
depends holomorphically on $(\lambda,\nu)\in\CC^2$ and vanishes only for $(\lambda,\nu)\in\Lslash_\even$. More precisely,
\begin{enumerate}\itemsep=0pt
\item[$1.$] For $(\lambda,\nu)\in\CC^2\setminus(\underline{\LocalPoles}^+\cup\underline{\GlobalPoles}^+)$ we have $\supp\widetilde{\Kslash}^{\AA,+}_{\lambda,\nu}=\RR^n$.
\item[$2.$] For $(\lambda,\nu)\in\underline{\LocalPoles}^+\setminus\underline{\GlobalPoles}^+$ with $\lambda+\nu=-\frac{3}{2}-2k$, $k\in\NN$, we have
\begin{gather*} \widetilde{\Kslash}^{\AA,+}_{\lambda,\nu}(x) = \frac{(-1)^{k+1}k!}{(2k+1)!}\widetilde{\Kslash}^{\BB,+}_{\lambda,\nu}(x). \end{gather*}
In particular, $\supp\widetilde{\Kslash}^{\AA,+}_{\lambda,\nu}=\RR^{n-1}$.
\item[$3.$] For $(\lambda,\nu)\in\underline{\GlobalPoles}^+\setminus\Lslash_\even$ with $\lambda-\nu=-\frac{1}{2}-2\ell$, $\ell\in\NN$, we have
\begin{gather*} \widetilde{\Kslash}^{\AA,+}_{\lambda,\nu}(x) = \frac{(-1)^\ell\ell!\pi^{\frac{n-1}{2}}}{2^{2\ell}\Gamma\big(\nu+\frac{n}{2}\big)}\widetilde{\Kslash}^{\CC,+}_{\lambda,\nu}(x). \end{gather*}
In particular, $\supp\widetilde{\Kslash}^{\AA,+}_{\lambda,\nu}=\{0\}$.
\item[$4.$] For $\big({-}\rho-\frac{1}{2}-i,-\rho_H-\frac{1}{2}-j\big)\in\Lslash_\even$, $2\ell=i-j$, restricting the function $(\lambda,\nu)\mapsto\widetilde{\Kslash}^{\AA,+}_{\lambda,\nu}$ to two different complex hyperplanes in $\CC^2$ through $\big({-}\rho-\frac{1}{2}-i,-\rho_H-\frac{1}{2}-j\big)$ and renormalizing gives two holomorphic families of distributions:
\begin{gather*}
\widetilde{\Kslash}^{\CC,+}_{\lambda,\nu}(x) = \frac{(-1)^\ell2^{2\ell}}{\ell!\pi^{\frac{n-1}{2}}}\Gamma\big(\nu+\rho_H+\tfrac{1}{2}\big)\widetilde{\Kslash}^{\AA,+}_{\lambda,\nu}(x), \qquad \lambda-\nu = -\tfrac{1}{2}-(i-j),\\
\doublewidetilde{\Kslash}^{\AA,+}_{\lambda,\nu}(x) = \Gamma\big(\tfrac{1}{2}\big(\lambda-\nu+\tfrac{1}{2}\big)\big)\widetilde{\Kslash}^{\AA,+}_{\lambda,\nu}(x), \qquad \nu = -\rho_H-\tfrac{1}{2}-j.
\end{gather*}
Their special values at $(\lambda,\nu)=\big({-}\rho-\frac{1}{2}-i,-\rho_H-\frac{1}{2}-j\big)$ satisfy
\begin{gather*}
\supp\widetilde{\Kslash}^{\CC,+}_{-\rho-\frac{1}{2}-i,-\rho_H-\frac{1}{2}-j} = \{0\},\\
\supp\doublewidetilde{\Kslash}^{\AA,+}_{-\rho-\frac{1}{2}-i,-\rho_H-\frac{1}{2}-j} = \begin{cases}\RR^n,&\mbox{for $n$ even,}\\\RR^{n-1},&\mbox{for $n$ odd,}\end{cases}
\end{gather*}
more precisely, for $n$ odd and $2k=n+i+j-1$
\begin{gather*} \doublewidetilde{\Kslash}^{\AA,+}_{-\rho-\frac{1}{2}-i,-\rho_H-\frac{1}{2}-j} = \frac{(-1)^{k+1}k!}{(2k+1)!}\big(|x'|^2+x_n^2\big)^j\\
\hphantom{\doublewidetilde{\Kslash}^{\AA,+}_{-\rho-\frac{1}{2}-i,-\rho_H-\frac{1}{2}-j} =}{}\times \big(\zeta(x')\delta^{(2k+1)}(x_n)-(2k+1)\dirac_n\delta^{(2k-1)}(x_n)\big). \end{gather*}
\end{enumerate}
\end{Theorem}

\begin{proof}By Theorem~\ref{thm:SBOsOppositeSign} the distribution
\begin{gather*} \widetilde{\Kslash}^{\AA,+}_{\lambda,\nu}(x) = \zeta(x)\cdot\widetilde{K}^{\AA,-}_{\lambda-\frac{1}{2},\nu+\frac{1}{2}}(x) = \frac{\zeta(x)\cdot K^{\AA,-}_{\lambda-\frac{1}{2},\nu+\frac{1}{2}}(x)}{\Gamma\big(\frac{1}{2}\big(\lambda+\nu+\frac{3}{2}\big)\big)\Gamma\big(\frac{1}{2}\big(\lambda-\nu+\frac{1}{2}\big)\big)} \end{gather*}
depends holomorphically on $(\lambda,\nu)\in\CC^2$, and by Lemma~\ref{lem:SBOsSpinorTranslationKernel} it vanishes if and only if $\big(\lambda-\frac{1}{2},\nu+\frac{1}{2}\big)\in L_\odd$, i.e., $(\lambda,\nu)\in\Lslash_\even$. The rest is similar to the proof of Theorem~\ref{thm:SBOsSpinors-} which we carry out in detail, some arguments are even easier since in this case we do not need to renormalize the kernel and we have
\begin{gather*}
(\lambda,\nu)\in\underline{\LocalPoles}^+ \Leftrightarrow \big(\lambda-\tfrac{1}{2},\nu+\tfrac{1}{2}\big)\in\LocalPoles^-, \qquad (\lambda,\nu)\in\underline{\GlobalPoles}^+ \Leftrightarrow \big(\lambda-\tfrac{1}{2},\nu+\tfrac{1}{2}\big)\in\GlobalPoles^-.\tag*{\qed}
\end{gather*}\renewcommand{\qed}{}
\end{proof}

\subsubsection[The distributions $\Kslash^{\AA,-}_{\lambda,\nu}(x)$]{The distributions $\boldsymbol{\Kslash^{\AA,-}_{\lambda,\nu}(x)}$}

Let
\begin{gather*}
\underline{\LocalPoles}^- = \big\{(\lambda,\nu)\colon \lambda+\nu=-\tfrac{1}{2}-2k,k\in\NN\big\},\\
\underline{\GlobalPoles}^- = \big\{(\lambda,\nu)\colon \lambda-\nu=-\tfrac{3}{2}-2\ell,k\in\NN\big\},\\
\Lslash_\odd = \big\{\big({-}\rho-\tfrac{1}{2}-i,-\rho_H-\tfrac{1}{2}-j\big)\colon i,j\in\NN,i-j\in2\NN+1\big\}\subseteq\underline{\GlobalPoles}^-.
\end{gather*}

For $(\lambda,\nu)\in\underline{\LocalPoles}^-$ with $\lambda+\nu=-\frac{1}{2}-2k$ we further define
\begin{gather*}
\widetilde{\Kslash}^{\BB,-}_{\lambda,\nu}(x) = \frac{1}{\Gamma\big(\frac{1}{2}\big(\lambda-\nu+\frac{3}{2}\big)\big)}\big(\zeta(x')\big(|x'|^2+x_n^2\big)^{-\nu-\frac{n}{2}}\delta^{(2k)}(x_n)\\
\hphantom{\widetilde{\Kslash}^{\BB,-}_{\lambda,\nu}(x) =}{} -2k\big(|x'|^2+x_n^2\big)^{-\nu-\frac{n}{2}}\dirac_n\delta^{(2k-2)}(x_n)\big),
\end{gather*}
 and for $(\lambda,\nu)\in\underline{\GlobalPoles}^-$ with $\lambda-\nu=-\frac{3}{2}-2\ell$ we put
\begin{gather*}
\widetilde{\Kslash}^{\CC,-}_{\lambda,\nu}(x) = \sum_{j=0}^\ell\frac{2^{2\ell-2j}(\nu-\ell-\frac{1}{2})_{\ell-j}}{j!(2\ell-2j)!}\big(\Delta_{\RR^{n-1}}^j\partial_n^{2\ell-2j}\dirac_{\RR^{n-1}}\delta\big)(x)\\
\hphantom{\widetilde{\Kslash}^{\CC,-}_{\lambda,\nu}(x) =}{} +\sum_{j=0}^\ell\frac{2^{2\ell-2j+1}(\nu-\ell-\frac{1}{2})_{\ell-j+1}}{j!(2\ell-2j+1)!}\big(\Delta_{\RR^{n-1}}^j\partial_n^{2\ell-2j}\dirac_n\delta\big)(x).
\end{gather*}
Note that both $\widetilde{\Kslash}^{\BB,-}_{\lambda,\nu}$ and $\widetilde{\Kslash}^{\CC,-}_{\lambda,\nu}$ depend holomorphically on $\lambda\in\CC$ (or $\nu\in\CC$).

\begin{Theorem}\label{thm:SBOsSpinors-}
The renormalized distribution
\begin{gather*} \widetilde{\Kslash}^{\AA,-}_{\lambda,\nu}(x) = \frac{\Kslash^{\AA,-}_{\lambda,\nu}(x)}{\Gamma\big(\frac{1}{2}\big(\lambda+\nu+\frac{1}{2}\big)\big)\Gamma\big(\frac{1}{2}\big(\lambda-\nu+\frac{3}{2}\big)\big)} \end{gather*}
depends holomorphically on $(\lambda,\nu)\in\CC^2$ and vanishes only for $(\lambda,\nu)\in\Lslash_\odd$. More precisely,
\begin{enumerate}\itemsep=0pt
\item[$1.$] For $(\lambda,\nu)\in\CC^2\setminus(\underline{\LocalPoles}^-\cup\underline{\GlobalPoles}^-)$ we have $\supp\widetilde{\Kslash}^{\AA,-}_{\lambda,\nu}=\RR^n$.
\item[$2.$] For $(\lambda,\nu)\in\underline{\LocalPoles}^-\setminus\underline{\GlobalPoles}^-$ with $\lambda+\nu=-\frac{1}{2}-2k$, $k\in\NN$, we have
\begin{gather*} \widetilde{\Kslash}^{\AA,-}_{\lambda,\nu}(x) = \frac{(-1)^kk!}{(2k)!}\widetilde{\Kslash}^{\BB,-}_{\lambda,\nu}(x). \end{gather*}
In particular, $\supp\widetilde{\Kslash}^{\AA,-}_{\lambda,\nu}=\RR^{n-1}$.
\item[$3.$] For $(\lambda,\nu)\in\underline{\GlobalPoles}^-\setminus\Lslash_\odd$ with $\lambda-\nu=-\frac{3}{2}-2\ell$, $\ell\in\NN$, we have
\begin{gather*} \widetilde{\Kslash}^{\AA,-}_{\lambda,\nu}(x) = \frac{(-1)^{\ell+1}\ell!\pi^{\frac{n-1}{2}}}{2^{2\ell+1}\Gamma\big(\nu+\frac{n}{2}\big)}\widetilde{\Kslash}^{\CC,-}_{\lambda,\nu}(x). \end{gather*}
In particular, $\supp\widetilde{\Kslash}^{\AA,-}_{\lambda,\nu}=\{0\}$.
\item[$4.$] For $\big({-}\rho-\frac{1}{2}-i,-\rho_H-\frac{1}{2}-j\big)\in\Lslash_\odd$, $2\ell=i-j-1$, restricting the function $(\lambda,\nu)\mapsto\widetilde{\Kslash}^{\AA,-}_{\lambda,\nu}$ to two different complex hyperplanes in $\CC^2$ through $\big({-}\rho-\frac{1}{2}-i,-\rho_H-\frac{1}{2}-j\big)$ and renormalizing gives two holomorphic families of distributions:
\begin{gather*}
\widetilde{\Kslash}^{\CC,-}_{\lambda,\nu}(x) = \frac{(-1)^{\ell+1}2^{2\ell+1}}{\ell!\pi^{\frac{n-1}{2}}}\Gamma\big(\nu+\rho_H+\tfrac{1}{2}\big)\widetilde{\Kslash}^{\AA,-}_{\lambda,\nu}(x), \qquad \lambda-\nu = -\tfrac{1}{2}-(i-j),\\
\doublewidetilde{\Kslash}^{\AA,-}_{\lambda,\nu}(x) = \Gamma\big(\tfrac{1}{2}\big(\lambda-\nu+\tfrac{3}{2}\big)\big)\widetilde{\Kslash}^{\AA,-}_{\lambda,\nu}(x), \qquad \nu = -\rho_H-\tfrac{1}{2}-j.
\end{gather*}
Their special values at $(\lambda,\nu)=\big({-}\rho-\frac{1}{2}-i,-\rho_H-\frac{1}{2}-j\big)$ satisfy
\begin{gather*}
\supp\widetilde{\Kslash}^{\CC,-}_{-\rho-\frac{1}{2}-i,-\rho_H-\frac{1}{2}-j} = \{0\},\\
\supp\doublewidetilde{\Kslash}^{\AA,-}_{-\rho-\frac{1}{2}-i,-\rho_H-\frac{1}{2}-j} = \begin{cases}\RR^n,&\mbox{for $n$ even,}\\\RR^{n-1},&\mbox{for $n$ odd,}\end{cases}
\end{gather*}
more precisely, for $n$ odd and $2k=n+i+j$:
\begin{gather*} \doublewidetilde{\Kslash}^{\AA,-}_{-\rho-\frac{1}{2}-i,-\rho_H-\frac{1}{2}-j} = \frac{(-1)^kk!}{(2k)!}\big(|x'|^2+x_n^2\big)^j\big(\zeta(x')\delta^{(2k)}(x_n)-2k\dirac_n\delta^{(2k-2)}(x_n)\big). \end{gather*}
\end{enumerate}
\end{Theorem}

\begin{proof}By Theorem~\ref{thm:SBOsSpherical} the distribution
\begin{gather*} \zeta(x)\cdot\widetilde{K}^{\AA,+}_{\lambda-\frac{1}{2},\nu+\frac{1}{2}}(x) = \frac{\zeta(x)\cdot K^{\AA,+}_{\lambda-\frac{1}{2},\nu+\frac{1}{2}}(x)}{\Gamma\big(\frac{1}{2}\big(\lambda+\nu+\frac{1}{2}\big)\big)\Gamma\big(\frac{1}{2}\big(\lambda-\nu-\frac{1}{2}\big)\big)} \end{gather*}
depends holomorphically on $(\lambda,\nu)\in\CC^2$, and by Lemma~\ref{lem:SBOsSpinorTranslationKernel} it vanishes if and only if $\lambda-\nu=\frac{1}{2}$ or $\big(\lambda-\frac{1}{2},\nu+\frac{1}{2}\big)\in L_\even$. Renormalizing shows that
\begin{gather*}
\widetilde{\Kslash}^{\AA,-}_{\lambda,\nu}(x) = \frac{\zeta(x)\cdot\widetilde{K}^{\AA,+}_{\lambda-\frac{1}{2},\nu+\frac{1}{2}}(x)}{\frac{1}{2}(\lambda-\nu-\frac{1}{2})} = \frac{\zeta(x)\cdot K^{\AA,+}_{\lambda-\frac{1}{2},\nu+\frac{1}{2}}(x)}{\Gamma\big(\frac{1}{2}\big(\lambda+\nu+\frac{1}{2}\big)\big)\Gamma\big(\frac{1}{2}\big(\lambda-\nu+\frac{3}{2}\big)\big)}
\end{gather*}
depends holomorphically on $(\lambda,\nu)\in\CC^2$. To prove the remaining statements, note that
\begin{gather*}
\zeta(x)\cdot\widetilde{\Kslash}^{\AA,-}_{\lambda,\nu}(x) = \frac{\zeta(x)^2\cdot\widetilde{K}^{\AA,+}_{\lambda-\frac{1}{2},\nu+\frac{1}{2}}(x)}{\frac{1}{2}\big(\lambda-\nu-\frac{1}{2}\big)} = -\frac{|x|^2\cdot\widetilde{K}^{\AA,+}_{\lambda-\frac{1}{2},\nu+\frac{1}{2}}(x)}{\frac{1}{2}\big(\lambda-\nu-\frac{1}{2}\big)}\cdot\id_\SS
 = -\widetilde{K}^{\AA,+}_{\lambda+\frac{1}{2},\nu-\frac{1}{2}}(x)\cdot\id_\SS
\end{gather*}
and hence
\begin{gather}
\supp\widetilde{K}^{\AA,+}_{\lambda+\frac{1}{2},\nu-\frac{1}{2}} \subseteq \supp\widetilde{\Kslash}^{\AA,-}_{\lambda,\nu}.\label{eq:Kslash-SupportInclusion}
\end{gather}

1.~Let $(\lambda,\nu)\in\CC^2\setminus(\underline{\LocalPoles}^-\cup\underline{\GlobalPoles}^-)$, then $(\lambda+\frac{1}{2},\nu-\frac{1}{2})\in\CC^2\setminus(\LocalPoles^+\cup\GlobalPoles^+)$ and hence $\supp\widetilde{K}^{\AA,+}_{\lambda+\frac{1}{2},\nu+\frac{1}{2}}=\RR^n$. Now \eqref{eq:Kslash-SupportInclusion} implies $\supp\widetilde{\Kslash}^{\AA,-}_{\lambda,\nu}=\RR^n$.

2.~Let $(\lambda,\nu)\in\underline{\LocalPoles}^-\setminus\underline{\GlobalPoles}^-$ with $\lambda+\nu=-\frac{1}{2}-2k$, then $\big(\lambda-\frac{1}{2},\nu+\frac{1}{2}\big)\in\LocalPoles^+$ and by Theorem~\ref{thm:SBOsSpherical}(2) and using $x_n\delta^{(2k)}(x_n)=-2k\delta^{(2k-1)}(x_n)$ we have
\begin{gather*}
\widetilde{\Kslash}^{\AA,-}_{\lambda,\nu}(x) = \frac{\zeta(x)\cdot\widetilde{K}^{\AA,+}_{\lambda-\frac{1}{2},\nu+\frac{1}{2}}(x)}{\frac{1}{2}\big(\lambda-\nu-\frac{1}{2}\big)} = \frac{(-1)^kk!}{(2k)!}\cdot\frac{\zeta(x)\cdot\widetilde{K}^{\BB,+}_{\lambda-\frac{1}{2},\nu+\frac{1}{2}}}{\frac{1}{2}\big(\lambda-\nu-\frac{1}{2}\big)}
 = \frac{(-1)^kk!}{(2k)!}\widetilde{\Kslash}^{\BB,-}_{\lambda,\nu}(x).
\end{gather*}

3.~Let $(\lambda,\nu)\in\underline{\GlobalPoles}^-\setminus\Lslash_\odd$ with $\lambda-\nu=-\frac{3}{2}-2\ell$, then $\big(\lambda-\frac{1}{2},\nu+\frac{1}{2}\big)\in\GlobalPoles^+$ with $\big(\lambda-\frac{1}{2})-(\nu+\frac{1}{2}\big)=-\frac{1}{2}-2(\ell+1)$ and by Theorem~\ref{thm:SBOsSpherical}(3) and using $\big[\Delta_{\RR^{n-1}}^j,x_i\big]=2j\Delta^{j-1}\frac{\partial}{\partial x_i}$ we have
\begin{align*}
\widetilde{\Kslash}^{\AA,-}_{\lambda,\nu}(x) &= \frac{\zeta(x)\cdot\widetilde{K}^{\AA,+}_{\lambda-\frac{1}{2},\nu+\frac{1}{2}}(x)}{\frac{1}{2}\big(\lambda-\nu-\frac{1}{2}\big)} = \frac{(-1)^{\ell+1}(\ell+1)!\pi^{\frac{n-1}{2}}}{2^{2\ell+2}\Gamma\big(\nu+\frac{n}{2}\big)}\cdot\frac{\zeta(x)\cdot\widetilde{K}^{\CC,+}_{\lambda-\frac{1}{2},\nu+\frac{1}{2}}}{(-\ell-1)}\\
&= \frac{(-1)^{\ell+1}\ell!\pi^{\frac{n-1}{2}}}{2^{2\ell+1}\Gamma\big(\nu+\frac{n}{2}\big)}\widetilde{\Kslash}^{\CC,-}_{\lambda,\nu}(x).
\end{align*}

4.~Now let $\big({-}\rho-\frac{1}{2}-i,-\rho_H-\frac{1}{2}-j\big)\in\Lslash_\odd$. The first statement about $\widetilde{\Kslash}^{\CC,-}_{\lambda,\nu}(x)$ follows immediately from (3). For the second statement note that $(-\rho-(i+1),-\rho_H-j)\in L_\even$ and we can use Theorem~\ref{thm:SBOsSpherical}(4). Restricting to $\nu=-\rho_H-\frac{1}{2}-j$ we have
\begin{align*}
\doublewidetilde{\Kslash}^{\AA,-}_{\lambda,\nu}(x) &= \Gamma\big(\tfrac{1}{2}\big(\lambda-\nu+\tfrac{3}{2}\big)\big)\widetilde{\Kslash}^{\AA,-}_{\lambda,\nu}\\
&= \frac{\Gamma\big(\tfrac{1}{2}\big(\lambda-\nu+\tfrac{3}{2}\big)\big)\zeta(x)\cdot\widetilde{K}^{\AA,+}_{\lambda-\frac{1}{2},\nu+\frac{1}{2}}(x)}{\frac{1}{2} \big(\lambda-\nu-\frac{1}{2}\big)} = \zeta(x)\cdot\doublewidetilde{K}^{\AA,+}_{\lambda-\frac{1}{2},\nu+\frac{1}{2}}(x),
\end{align*}
which clearly depends holomorphically on $\lambda\in\CC$. Moreover, $\supp\doublewidetilde{K}^{\AA,+}_{-\rho-(i+1),-\rho_H-j}=\RR^n$, resp.~$\RR^{n-1}$, so that $\supp x_i\doublewidetilde{K}^{\AA,+}_{-\rho-(i+1),-\rho_H-j}=\RR^n$, resp.~$\RR^{n-1}$, for $1\leq i\leq n-1$, and hence $\supp\doublewidetilde{\Kslash}^{\AA,-}_{-\rho-\frac{1}{2}-i,-\rho_H-\frac{1}{2}-j}=\RR^n$, resp.~$\RR^{n-1}$, by~\eqref{eq:SupportOfKslash}. For odd $n$ and $2k=n+i+j$ we further have by Theorem~\ref{thm:SBOsSpherical}(4)
\begin{align*}
\hspace{-.3cm}\doublewidetilde{\Kslash}^{\AA,-}_{-\rho-\frac{1}{2}-i,-\rho_H-\frac{1}{2}-j}(x) &= \zeta(x)\cdot\doublewidetilde{K}^{\AA,+}_{-\rho-(i+1),-\rho_H-j}(x)
 = \frac{(-1)^kk!}{(2k)!}\zeta(x)\cdot\big(|x'|^2+x_n\big)^j\delta^{(2k)}(x_n)\\
&= \frac{(-1)^kk!}{(2k)!}\big(|x'|^2+x_n^2\big)^j\big(\zeta(x')\delta^{(2k)}(x_n)-2k\dirac_n\delta^{(2k-2)}\big).\tag*{\qed}
\end{align*}\renewcommand{\qed}{}
\end{proof}

\begin{Remark}The identity
\begin{gather*} \zeta(x)\cdot\widetilde{\Kslash}^{\AA,-}_{\lambda,\nu}(x) = -\widetilde{K}^{\AA,+}_{\lambda+\frac{1}{2},\nu-\frac{1}{2}}(x)\cdot\id_\SS \end{gather*}
that was used in the previous proof can be explained by again applying the translation principle, now with the dual representation $(\tau')^*$, and then projecting onto the trivial constituent in $\tau'\otimes(\tau')^*=\End_\CC(\tau')$ which is spanned by $\id_\SS$.
\end{Remark}

\begin{Remark}\label{rem:JuhlOperatorsSpinors}The intertwining differential operators $C^\infty\big(\RR^n;\SS\big)\to C^\infty\big(\RR^{n-1},\SS\big)$ with integral kernel $\widetilde{\Kslash}^{\CC,\pm}_{\lambda,\nu}(x)$ have been obtained before \cite[Theorem~5.7]{KOSS15} and it was conjectured that these operators exhaust the space of all intertwining differential operators. Theorem~\ref{thm:ClassificationSBOsSpinors} confirms this conjecture.
\end{Remark}

\section[The compact picture of symmetry breaking operators between spinors]{The compact picture of symmetry breaking operators\\ between spinors}\label{sec:CompactPictureSBOsSpinors}

We use the method developed in~\cite{MO14a} to show that the symmetry breaking operators found in Section~\ref{sec:ExSpinors} between spinor-valued principal series span the space of all symmetry breaking operators. The notation will be as in the previous section.

\subsection[Reduction to the pair $\big(\widetilde{G}_1,\widetilde{H}_1\big)$]{Reduction to the pair $\boldsymbol{\big(\widetilde{G}_1,\widetilde{H}_1\big)}$}

To simplify computations we first reduce the study of symmetry breaking operators for the pair $\big(\widetilde{G},\widetilde{H}\big)=(\Pin(n+1,1),\Pin(n,1))$ to the pair $\big(\widetilde{G}_1,\widetilde{H}_1\big)=(\Pin(n+1,1)_1,\Pin(n,1)_1)$, where
\begin{gather*} \Pin(p,q)_1 = \Pin(p,q)_{++}\cup\Pin(p,q)_{-+}. \end{gather*}
We refer the reader to Appendix~\ref{app:CliffordAlgebrasPinGroups} for the definition of $\Pin(p,q)_{\pm\pm}$. Note that $\widetilde{P}_1=\widetilde{P}\cap\widetilde{G}_1=\widetilde{M}_1AN$ is a parabolic subgroup of $\widetilde{G}_1$ with $\widetilde{M}_1=\Pin(n)$. Similarly, $\widetilde{P}_{H,1}=\widetilde{P}_H\cap\widetilde{H}_1=\widetilde{M}_{H,1}AN_H$ is a parabolic subgroup of $\widetilde{H}_1$ with $\widetilde{M}_{H,1}=\Pin(n-1)$. Further, $\widetilde{G}_1/\widetilde{P}_1\simeq\widetilde{G}/\widetilde{P}$ and $\widetilde{H}_1/\widetilde{P}_{H,1}\simeq\widetilde{H}/\widetilde{P}_H$, and hence
\begin{gather*}
\pislash_{\lambda,\delta}|_{\widetilde{G}_1} \simeq \pislash_\lambda = \Ind_{\widetilde{P}_1}^{\widetilde{G}_1}\big(\zeta_n\otimes e^\lambda\otimes\1\big),\\
\tauslash_{\nu,\varepsilon}|_{\widetilde{H}_1} \simeq \tauslash_\nu = \Ind_{\widetilde{P}_{H,1}}^{\widetilde{H}_1}\big(\zeta_{n-1}\otimes e^\nu\otimes\1\big).
\end{gather*}
Note that the restrictions are independent of $\delta$ and $\varepsilon$.

\begin{Lemma}\label{lem:ReductionToG1H1}
For $\lambda,\nu\in\CC$ and $\delta,\varepsilon\in\ZZ/2\ZZ$ we have
\begin{gather*} \dim\Hom_{\widetilde{H}}(\pislash_{\lambda,\delta},\tauslash_{\nu,\varepsilon})+\dim\Hom_{\widetilde{H}}(\pislash_{\lambda,1-\delta},\tauslash_{\nu,\varepsilon}) = \dim\Hom_{\widetilde{H}_1}(\pislash_\lambda,\tauslash_\nu). \end{gather*}
\end{Lemma}

\begin{proof}
The proof of the two identities is analogous to \cite[Theorem 2.10]{KKP16} and uses $\widetilde{G}/\widetilde{G}_1\simeq\widetilde{M}/\widetilde{M}_1\simeq\ZZ/2\ZZ$ and $\widetilde{H}/\widetilde{H}_1\simeq\widetilde{M}_H/\widetilde{M}_{H,1}\simeq\ZZ/2\ZZ$.
\end{proof}

In the remaining part of this section we determine $\dim\Hom_{\widetilde{H}_1}(\pislash_\lambda,\tauslash_\nu)$ using the technique developed in \cite{MO14a}.

\subsection{Harish-Chandra modules}

Let $\theta$ denote the Cartan involution of $G=\upO(n+1,1)$ given by conjugation with the matrix $\diag(1,\ldots,1,-1)$. Then $\theta$ lifts to a Cartan involution of $\widetilde{G}$ which restricts to Cartan involutions of $\widetilde{G}_1$, $\widetilde{H}$ and $\widetilde{H}_1$. This gives the following maximal compact subgroups:
\begin{gather*} \widetilde{K}_1=\widetilde{G}_1^\theta=\Pin(n+1), \qquad \widetilde{K}_{H,1}=\widetilde{H}_1^\theta=\Pin(n). \end{gather*}
The embedding $\widetilde{K}_{H,1}\subseteq\widetilde{K}_1$ is given by the embedding $\Cl(n)\subseteq\Cl(n+1)$ induced from the standard embedding $\RR^n\subseteq\RR^{n+1},\,x'\mapsto(x',0)$.

The method in \cite{MO14a} actually constructs and classifies intertwining operators between the underlying Harish-Chandra modules of $\pislash_\lambda$ and $\tauslash_\nu$ which we denote by $(\pislash_\lambda)_\HC$ and $(\tauslash_\nu)_\HC$. As vector space $(\pislash_\lambda)_\HC$ is the space of all $\widetilde{K}_1$-finite vectors in $\pislash_\lambda$, or equivalently the sum of all irreducible $\widetilde{K}_1$-subrepresentations of $\pislash_\lambda$. Hence, $(\pislash_\lambda)_\HC$ clearly carries a $\widetilde{K}_1$-action. It is further invariant under the action of the Lie algebra $\frakg$ of $\widetilde{G}_1$ and hence carries the structure of a $\big(\frakg,\widetilde{K}_1\big)$-module.

Since $(\pislash_\lambda)_\HC$ is dense in $\pislash_\lambda$, we obtain an injective map
\begin{gather*} \Hom_{\widetilde{H}_1}(\pislash_\lambda|_{\widetilde{H}_1},\tauslash_\nu) \hookrightarrow \Hom_{(\frakh,\widetilde{K}_{H,1})}((\pislash_\lambda)_\HC|_{(\frakh,\widetilde{K}_{H,1})},(\tauslash_\nu)_\HC). \end{gather*}
Using the method in \cite{MO14a} we determine in this section the dimension of the space of intertwining operators between the Harish-Chandra modules. This gives an upper bound for the dimension of the space of intertwining operators between the representations $\pislash_\lambda$ and $\tauslash_\nu$. From the explicit construction of intertwining operators in Section~\ref{sec:ExSpinors} we also have a lower bound, and it turns out that these bounds agree, so that the injective map in fact is a bijection.

\subsection[$K$-types]{$\boldsymbol{K}$-types}\label{sec:SpinorKtypes}

We study the representations $\pislash_\lambda$ and $\tauslash_\nu$ in the compact picture, i.e. on sections of spin bundles over $\widetilde{K}_1/\widetilde{M}_1\simeq S^n$ and $\widetilde{K}_{H,1}/\widetilde{M}_{H,1}\simeq S^{n-1}$. More precisely,
\begin{gather*} \pislash_\lambda|_{\widetilde{K}_1} = C^\infty\big(\widetilde{K}_1\times_{\widetilde{M}_1}\zeta_n\big). \end{gather*}
By Frobenius Reciprocity and using the classical branching rules for spin groups we easily obtain
\begin{align*}
(\pislash_\lambda)_\HC|_{\widetilde{K}_1} &\simeq \begin{cases}\displaystyle\bigoplus_{i=0}^\infty\big(\zeta_{n+1,i}^+\oplus\zeta_{n+1,i}^-\big),&\mbox{for $n$ even,}\\\displaystyle\bigoplus_{i=0}^\infty\zeta_{n+1,i},&\mbox{for $n$ odd,}\end{cases}
\end{align*}
where $\zeta_{n+1,i}^{(\pm)}$ denote the higher spin representations of $\Pin(n+1)$ (see Appendix~\ref{app:HigherSpinReps}). By Appendix~\ref{app:BranchingLaws}
\begin{alignat*}{3}
& \zeta_{n+1,i}^\pm|_{\widetilde{K}_{H,1}} \simeq \bigoplus_{j=0}^i\zeta_{n,j}\qquad && \mbox{($n$ even)},&\\
& \zeta_{n+1,i}|_{\widetilde{K}_{H,1}}\simeq \bigoplus_{j=0}^i\big(\zeta_{n,j}^+\oplus\zeta_{n,j}^-\big)\qquad && \mbox{($n$ odd)}.&
\end{alignat*}
We use the notation $\alpha=(i,\pm)$ and $\alpha'=j$ for even $n$, and $\alpha=i$, $\alpha'=(j,\pm)$ for odd $n$, and write $\alpha'\subset\alpha$ if $j\leq i$. Denote by $\calE(\alpha)$ the subspace of $C^\infty\big(\widetilde{K}_1\times_{\widetilde{M}_1}\zeta_n\big)$ which is isomorphic to the $\widetilde{K}_1$-representation $\zeta_{n+1,i}^\pm$ if $\alpha=(i,\pm)$ and $\zeta_{n+1,i}$ if $\alpha=i$, and similar for the corresponding subspace $\calE'(\alpha')$ of $C^\infty\big(\widetilde{K}_{H,1}\times_{\widetilde{M}_{H,1}}\zeta_{n-1}\big)$. In this notation
\begin{gather*} (\pislash_\lambda)_\HC|_{\widetilde{K}_1} = \bigoplus_\alpha\calE(\alpha), \qquad (\tauslash_\nu)_\HC|_{\widetilde{K}_{H,1}} = \bigoplus_{\alpha'}\calE'(\alpha'). \end{gather*}
Further,
\begin{gather*} \calE(\alpha)|_{\widetilde{K}_{H,1}} = \bigoplus_{\alpha'\subset\alpha}\calE(\alpha,\alpha'), \end{gather*}
where $\calE(\alpha,\alpha')\simeq\calE'(\alpha')$. We fix a non-zero $\widetilde{K}_{H,1}$-intertwining operator
\begin{gather*} R_{\alpha,\alpha'}\colon \ \calE(\alpha,\alpha')\stackrel{\sim}{\to}\calE'(\alpha') \end{gather*}
for each pair $(\alpha,\alpha')$ with $\alpha'\subset\alpha$.

\subsection{Scalar identities for symmetry breaking operators}\label{sec:ScalarIdentities}

Write $\frakg=\frakk\oplus\fraks$ for the eigenspace decomposition of $\frakg$ under $\theta$ and $\frakh=\frakk_H\oplus\fraks_H$ for the one of~$\frakh$. We identify $\fraks\simeq\RR^{n+1}$ via
\begin{gather*} \RR^{n+1}\stackrel{\sim}{\longrightarrow}\fraks, \qquad Y\mapsto\left(\begin{matrix} \0_{n+1}&Y\\Y^\top&0\end{matrix}\right). \end{gather*}
Consider the map $\omega\colon \fraks_\CC\to C^\infty\big(\widetilde{K}_1/\widetilde{M}_1\big)=C^\infty(S^n)$ given by
\begin{gather*} \omega(Y)(x) = Y^\top x, \qquad x\in S^n,Y\in\CC^{n+1}. \end{gather*}
Multiplication by $\omega(Y)$ defines an operator $M(\omega(Y))\colon C^\infty\big(\widetilde{K}_1\times_{\widetilde{M}_1}\zeta_n\big)\to C^\infty\big(\widetilde{K}_1\times_{\widetilde{M}_1}\zeta_n\big)$. From the weights of $\fraks_\CC$ it is easy to see that $M(\omega(Y))$ maps $\calE(\alpha)$ to $\calE(\beta)$ if and only if
\begin{alignat*}{3}
& \alpha=(i,\pm) \quad \mbox{and} \quad \beta\in\{(i+1,\pm),(i,\mp),(i-1,\pm)\} \qquad && \mbox{($n$ even)},& \\
 & \alpha=i \quad \mbox{and}\quad \beta\in\{i+1,i,i-1\} \qquad && \mbox{($n$ odd)}.&
\end{alignat*}
Write $\alpha\leftrightarrow\beta$ if this is the case.

We use the same notation for $\alpha'$ and $\beta'$ and write $M(\omega'(Y))$ for multiplication by the function $\omega'(Y)(x)=Y^\top x$, $x\in S^{n-1}$, $Y\in\CC^n$. Further write $(\alpha,\alpha')\leftrightarrow(\beta,\beta')$ if $\alpha'\subset\alpha$, $\beta'\subset\beta$ and $\alpha\leftrightarrow\beta$, $\alpha'\leftrightarrow\beta'$.

For $(\alpha,\alpha')\leftrightarrow(\beta,\beta')$ define
\begin{gather*}
\omega_{\alpha,\alpha'}^{\beta,\beta'}(Y) = \proj_{\calE(\beta,\beta')}\circ M(\omega(Y))|_{\calE(\alpha,\alpha')},\\
\omega_{\alpha'}^{\beta'}(Y) = \proj_{\calE'(\beta')}\circ M(\omega'(Y))|_{\calE'(\alpha')}.
\end{gather*}
Then both $R_{\beta,\beta'}\circ\omega_{\alpha,\alpha'}^{\beta,\beta'}$ and $\omega_{\alpha'}^{\beta'}\circ R_{\alpha,\alpha'}$ are $\widetilde{K}_{H,1}$-intertwining operators $\fraks_{H,\CC}\otimes\calE(\alpha,\alpha')\to\calE'(\beta')$. Since $\calE'(\beta')$ occurs in $\fraks_{H,\CC}\otimes\calE(\alpha,\alpha')$ with multiplicity at most one, we have
\begin{gather}
R_{\beta,\beta'}\circ\omega_{\alpha,\alpha'}^{\beta,\beta'}(Y) = \lambda_{\alpha,\alpha'}^{\beta,\beta'}\cdot\omega_{\alpha'}^{\beta'}(Y)\circ R_{\alpha,\alpha'}\label{eq:DefLambdas}
\end{gather}
for some constant $\lambda_{\alpha,\alpha'}^{\beta,\beta'}\in\CC$.

Now, every $\widetilde{K}_{H,1}$-intertwining operator $T\colon (\pislash_\lambda)_\HC\to(\tauslash_\nu)_\HC$ is uniquely determined by its restriction to the subspaces $\calE(\alpha,\alpha')$ on which it is given by
\begin{gather}
T|_{\calE(\alpha,\alpha')} = t_{\alpha,\alpha'}\cdot R_{\alpha,\alpha'}\label{eq:IntertwinersAsScalars}
\end{gather}
for some scalars $t_{\alpha,\alpha'}\in\CC$. In~\cite{MO14a} it is proven that $T$ is $(\frakh,\widetilde{K}_{H,1})$-intertwining if and only if for all $(\alpha,\alpha')$ and $\alpha'\leftrightarrow\beta'$ we have
\begin{gather}
(2\nu+\sigma_{\beta'}'-\sigma_{\alpha'}')t_{\alpha,\alpha'} = \sum_{\substack{\beta\\(\alpha,\alpha')\leftrightarrow(\beta,\beta')}} \lambda_{\alpha,\alpha'}^{\beta,\beta'}(2\lambda+\sigma_\beta-\sigma_\alpha)t_{\beta,\beta'}.\label{eq:GeneralScalarIdentity}
\end{gather}
Here the constants $\sigma_\alpha$ and $\sigma_{\alpha'}'$ are essentially the eigenvalues of the Casimir element on the $K$-types $\calE(\alpha)$ and $\calE'(\alpha')$. From \cite[Section~3.a]{BOO96} it follows that
\begin{gather*} \sigma_\beta-\sigma_\alpha = \begin{cases}2i+n+1, & \mbox{for $(\alpha,\beta)=((i,\pm),(i+1,\pm))$ resp.~$(i,i+1)$,}\\0, & \mbox{for $(\alpha,\beta)=((i,\pm),(i,\mp))$ resp.~$(i,i)$,}\\-2i-n+1, & \mbox{for $(\alpha,\beta)=((i,\pm),(i-1,\pm))$ resp.~$(i,i-1)$,}\end{cases} \end{gather*}
and similarly for $\sigma_{\beta'}'-\sigma_{\alpha'}'$. Hence, the only missing constants in \eqref{eq:GeneralScalarIdentity} are the numbers $\lambda_{\alpha,\alpha'}^{\beta,\beta'}$ which we now compute after choosing explicit operators $R_{\alpha,\alpha'}$.

\subsection[Explicit embeddings of $K$-types]{Explicit embeddings of $\boldsymbol{K}$-types}

We realize the $K$-types $\zeta_{n+1,i}^{(\pm)}$ explicitly inside $\pislash_\lambda$.

Assume first that $n$ is even, then the $\widetilde{M}_1$-representation $\zeta_n$ is equivalent to the restriction of the $\widetilde{K}_1$-representation $\zeta_{n+1}=\zeta_{n+1}^+$ to $\widetilde{M}_1$. Hence
\begin{gather*} \pislash_\lambda|_{\widetilde{K}_1} = \Ind_{\widetilde{M}_1}^{\widetilde{K}_1}\big(\zeta_{n+1}|_{\widetilde{M}_1}\big) \simeq \Ind_{\widetilde{M}_1}^{\widetilde{K}_1}(\1)\otimes\zeta_{n+1} = C^\infty\big(\widetilde{K}_1/\widetilde{M}_1,\SS_{n+1}\big). \end{gather*}
The underlying Harish-Chandra module is in this picture given by restrictions of $\SS_{n+1}$-valued polynomials on $\RR^{n+1}$ to the sphere $\widetilde{K}_1/\widetilde{M}_1\simeq S^n\subseteq\RR^{n+1}$. Denote by $\calM_i\big(\RR^{n+1},\SS_{n+1}\big)$ the space of monogenic polynomials of degree $i$ (see Appendix~\ref{app:HigherSpinReps} for details). By the Fischer decomposition~\eqref{eq:FischerDecomposition} we have
\begin{gather*} C^\infty\big(S^n,\SS_{n+1}\big)_\HC = \bigoplus_{i=0}^\infty\big(\calM_i\big(\RR^{n+1},\SS_{n+1}\big)\oplus\underline{x}\calM_i\big(\RR^{n+1},\SS_{n+1}\big)\big), \end{gather*}
where we identify a homogeneous polynomial $\phi\colon \RR^{n+1}\to\SS_{n+1}$ with its restriction to the unit sphere $S^n\subseteq\RR^{n+1}$. Then $\calM_i\big(\RR^{n+1},\SS_{n+1}\big)$ carries the representation $\zeta_{n+1,i}^+=\zeta_{n+1,i}$ and $\underline{x}\calM_i\big(\RR^{n+1},\SS_{n+1}\big)$ carries the representation $\zeta_{n+1,i}^-=\zeta_{n+1,i}\otimes\det$. In the notation of Section~\ref{sec:SpinorKtypes} this means
\begin{gather*} \calE(\alpha) = \begin{cases}\calM_i\big(\RR^{n+1},\SS_{n+1}\big),&\mbox{for $\alpha=(i,+)$,}\\\underline{x}\calM_i\big(\RR^{n+1},\SS_{n+1}\big),&\mbox{for $\alpha=(i,-)$.}\end{cases} \end{gather*}

Now let $n$ be odd, then the restriction of the $\widetilde{K}_1$-representation $\zeta_{n+1}$ to $\widetilde{M}_1$ decomposes into $\zeta_n^+\oplus\zeta_n^-$, where $\zeta_n^+=\zeta_n$ and $\zeta_n^-=\zeta_n\otimes\det$. Hence
\begin{gather*} \pislash_\lambda|_{\widetilde{K}_1} = \Ind_{\widetilde{M}_1}^{\widetilde{K}_1}\big(\zeta_n^+\big) \subseteq \Ind_{\widetilde{M}_1}^{\widetilde{K}_1}\big(\zeta_{n+1}|_{\widetilde{M}_1}\big) \simeq C^\infty\big(\widetilde{K}_1/\widetilde{M}_1,\SS_{n+1}\big). \end{gather*}
Again, the underlying Harish-Chandra module is given by
\begin{gather*} C^\infty\big(S^n,\SS_{n+1}\big)_\HC = \bigoplus_{i=0}^\infty\big(\calM_i\big(\RR^{n+1},\SS_{n+1}\big)\oplus\underline{x}\calM_i\big(\RR^{n+1},\SS_{n+1}\big)\big) \end{gather*}
but in this case $\calM_i\big(\RR^{n+1},\SS_{n+1}\big)$ and $\underline{x}\calM_i\big(\RR^{n+1},\SS_{n+1}\big)$ are isomorphic $\widetilde{K}_1$-representations, more precisely they both are equivalent to $\zeta_{n+1,i}$. A short calculation using Lemma~\ref{lem:FundSpinBranchingOdd} shows that the $K$-types belonging to $\pislash_\lambda$ are given by
\begin{gather*} \calM_i\big(\RR^{n+1},\SS_{n+1}\big)\to\calM_i\big(\RR^{n+1},\SS_{n+1}\big)\oplus\underline{x}\calM_i\big(\RR^{n+1},\SS_{n+1}\big), \qquad \phi\mapsto\phi+\underline{x}\phi^\vee, \end{gather*}
where $\phi^\vee(x)=\gamma(\phi(x))$, $\gamma\colon \SS_{n+1}\to\SS_{n+1}$ being the map defined in~\eqref{eq:DefGamma} that intertwines $\zeta_{n+1}$ with $\zeta_{n+1}\otimes\det$. In the notation of Section~\ref{sec:SpinorKtypes} this reads
\begin{gather*} \calE(\alpha) = \big\{\phi+\underline{x}\phi^\vee\colon \phi\in\calM_i(\RR^{n+1},\SS_{n+1})\big\} \qquad \mbox{for $\alpha=i$.} \end{gather*}

Using the same identifications for the $\widetilde{K}_{H,1}$-types $\calE'(\alpha')$ we now fix for each pair $\alpha'\subset\alpha$ a~$\widetilde{K}_{H,1}$-intertwining operator
\begin{gather*} S_{\alpha,\alpha'}\colon \ \calE'(\alpha')\hookrightarrow\calE(\alpha,\alpha') \end{gather*}
as follows: For $n$ even set
\begin{gather*}\begin{split}&
S_{\alpha,\alpha'}\big(\phi+\underline{x}'\phi^\vee\big) = I_{j\to i}(\phi), \qquad \alpha=(i,+),\alpha'=j,\\
&S_{\alpha,\alpha'}\big(\phi+\underline{x}'\phi^\vee\big)= \underline{x}I_{j\to i}(\phi^\vee), \qquad \alpha=(i,-),\alpha'=j,\end{split}
\end{gather*}
and for $n$ odd we put
\begin{gather*}\begin{split}&
S_{\alpha,\alpha'}(\phi) = I_{j\to i}(\phi)+\underline{x}I_{j\to i}(\phi)^\vee, \qquad \alpha=i,\alpha'=(j,+),\\
&S_{\alpha,\alpha'}(\underline{x}'\phi)= I_{j\to i}(\phi)^\vee+\underline{x}I_{j\to i}(\phi), \qquad \alpha=i,\alpha'=(j,-).\end{split}
\end{gather*}
Then $S_{\alpha,\alpha'}$ is a $\widetilde{K}_{H,1}$-equivariant isomorphism and we can define the $\widetilde{K}_{H,1}$-intertwining operator $R_{\alpha,\alpha'}\colon \calE(\alpha,\alpha')\to\calE'(\alpha')$ by $R_{\alpha,\alpha'}=S_{\alpha,\alpha'}^{-1}$. For this particular choice of operators $R_{\alpha,\alpha'}$ we can now compute the proportionality constants $\lambda_{\alpha,\alpha'}^{\beta,\beta'}$ defined in~\eqref{eq:DefLambdas}.

\begin{Lemma}\label{lem:ExplicitConstantsLambda}Let $0\leq j\leq i$.
\begin{enumerate}\itemsep=0pt
\item[$1.$] For $n$ even and $(\alpha,\alpha')=((i,\pm),j)$ we have
\begin{gather*} \lambda_{\alpha,\alpha'}^{\beta,\beta'} = \begin{cases}\dfrac{n+2j-1}{n+2i+1},&(\beta,\beta')=((i+1,\pm),j+1),\vspace{1mm}\\
\pm\dfrac{2(n+2j-1)}{(n+2i-1)(n+2i+1)}\sqrt{-1},&(\beta,\beta')=((i,\mp),j+1),\vspace{1mm}\\
-\dfrac{n+2j-1}{n+2i-1},&(\beta,\beta')=((i-1,\pm),j+1),\vspace{1mm}\\
\mp\dfrac{i-j+1}{n+2i+1}\sqrt{-1},&(\beta,\beta')=((i+1,\pm),j),\vspace{1mm}\\
\dfrac{(n+2i)(n+2j-1)}{(n+2i-1)(n+2i+1)},&(\beta,\beta')=((i,\mp),j),\vspace{1mm}\\
\pm\dfrac{n+i+j-1}{n+2i-1}\sqrt{-1},&(\beta,\beta')=((i-1,\pm),j),\vspace{1mm}\\
-\dfrac{(i-j+1)(i-j+2)}{(n+2i+1)(n+2j-3)},&(\beta,\beta')=((i+1,\pm),j-1),\vspace{1mm}\\
\pm\dfrac{2(i-j+1)(n+i+j-1)}{(n+2i-1)(n+2i+1)(n+2j-3)}\sqrt{-1},&(\beta,\beta')=((i,\mp),j-1),\vspace{1mm}\\
\dfrac{(n+i+j-2)(n+i+j-1)}{(n+2i-1)(n+2j-3)},&(\beta,\beta')=((i-1,\pm),j-1).\end{cases} \end{gather*}
\item[$2.$] For $n$ odd and $(\alpha,\alpha')=(i,(j,\pm))$ we have
\begin{gather*} \lambda_{\alpha,\alpha'}^{\beta,\beta'} = \begin{cases}\dfrac{n+2j-1}{n+2i+1},&(\beta,\beta')=(i+1,(j+1,\pm)),\vspace{1mm}\\
\mp\dfrac{2(n+2j-1)}{(n+2i-1)(n+2i+1)},&(\beta,\beta')=(i,(j+1,\pm)),\vspace{1mm}\\
-\dfrac{n+2j-1}{n+2i-1},&(\beta,\beta')=(i-1,(j+1,\pm)),\vspace{1mm}\\
\pm\dfrac{i-j+1}{n+2i+1},&(\beta,\beta')=(i+1,(j,\mp)),\vspace{1mm}\\
\dfrac{(n+2i)(n+2j-1)}{(n+2i-1)(n+2i+1)},&(\beta,\beta')=(i,(j,\mp)),\vspace{1mm}\\
\mp\dfrac{n+i+j-1}{n+2i-1},&(\beta,\beta')=(i-1,(j,\mp)),\vspace{1mm}\\
-\dfrac{(i-j+1)(i-j+2)}{(n+2i+1)(n+2j-3)},&(\beta,\beta')=(i+1,(j-1,\pm)),\vspace{1mm}\\
\mp\dfrac{2(i-j+1)(n+i+j-1)}{(n+2i-1)(n+2i+1)(n+2j-3)},&(\beta,\beta')=(i,(j-1,\pm)),\vspace{1mm}\\
\dfrac{(n+i+j-2)(n+i+j-1)}{(n+2i-1)(n+2j-3)},&(\beta,\beta')=(i-1,(j-1,\pm)).\end{cases} \end{gather*}
\end{enumerate}
\end{Lemma}

\begin{proof}Since $S_{\alpha,\alpha'}=R_{\alpha,\alpha'}^{-1}$ the defining equation \eqref{eq:DefLambdas} for $\lambda_{\alpha,\alpha'}^{\beta,\beta'}$ is equivalent to
\begin{gather*} \omega_{\alpha,\alpha'}^{\beta,\beta'}(Y)\circ S_{\alpha,\alpha'} = \lambda_{\alpha,\alpha'}^{\beta,\beta'}\cdot S_{\beta,\beta'}\circ\omega_{\alpha'}^{\beta'}(Y), \qquad Y\in\fraks_{H,\CC}. \end{gather*}
Let first $n$ be even and $\alpha=(i,+)$, $\alpha'=j$. Then for $Y=e_k$, $1\leq k\leq n$, and $\phi+\underline{x}'\phi^\vee\in\calE'(\alpha')$, $\phi\in\calM_j(\RR^n;\SS_n)\}$, we have
\begin{align*}
 \omega(Y)\big(\phi+\underline{x}'\phi^\vee\big) & = x_k\phi+\underline{x}'(x_k\phi)^\vee = \big(\phi_k^+-\underline{x}'\phi_k^0+\phi_k^-\big)+\underline{x}'\big(\phi_k^+-\underline{x}'\phi_k^0+\phi_k^-\big)^\vee\\
& = \big(\phi_k^++\underline{x}'\big(\phi_k^+\big)^\vee\big) -\big(\big(\phi_k^0\big)^\vee+\underline{x}'\big(\phi_k^0\big)^\vee{}^\vee\big) + \big(\phi_k^-+\underline{x}'\big(\phi_k^-\big)^\vee\big).
\end{align*}
Hence,
\begin{gather*} \omega_{\alpha'}^{\beta'}(Y)\phi = \begin{cases}\phi_k^+,&\beta'=j+1,\\-\big(\phi_k^0\big)^\vee,&\beta'=j,\\\phi_k^-,&\beta'=j-1\end{cases} \end{gather*}
and
\begin{gather*} S_{\beta,\beta'}\circ\omega_{\alpha'}^{\beta'}(Y)\phi = \begin{cases}I_{j+1\to i+1}\big(\phi_k^+\big),&(\beta,\beta')=((i+1,+),j+1),\\
\underline{x}I_{j+1\to i}\big(\big(\phi_k^+\big)^\vee\big),&(\beta,\beta')=((i,-),j+1),\\
I_{j+1\to i-1}\big(\phi_k^+\big),&(\beta,\beta')=((i-1,+),j+1),\\
-I_{j\to i+1}\big(\big(\phi_k^0\big)^\vee\big),& (\beta,\beta')=((i+1,+),j),\\
-\underline{x}I_{j\to i}\big(\phi_k^0\big),&(\beta,\beta')=((i,-),j),\\
-I_{j\to i-1}\big(\big(\phi_k^0\big)^\vee\big),&(\beta,\beta')=((i-1,+),j),\\
I_{j-1\to i+1}\big(\phi_k^-\big),&(\beta,\beta')=((i+1,+),j-1),\\
\underline{x}I_{j-1\to i}\big(\big(\phi_k^-\big)^\vee\big),&(\beta,\beta')=((i,-),j-1),\\
I_{j-1\to i-1}\big(\phi_k^-\big),&(\beta,\beta')=((i-1,+),j-1).\end{cases} \end{gather*}
Note that $\phi^\vee=-\sqrt{-1}\underline{e_{n+1}}\phi$ by Lemma~\ref{lem:FundSpinBranchingEven}(2). On the other hand,
\begin{gather*} S_{\alpha,\alpha'}\phi = I_{j\to i}(\phi) \end{gather*}
and
\begin{gather*} \omega(Y)S_{\alpha,\alpha'}\phi = x_kI_{j\to i}(\phi) = (I_{j\to i}(\phi))_k^+ - \underline{x}(I_{j\to i}(\phi))_k^0 + (I_{j\to i}(\phi))_k^-. \end{gather*}
Then Lemma~\ref{lem:CocycleVsEmbedding} gives the constants $\lambda_{\alpha,\alpha'}^{\beta,\beta'}$ for $(\alpha,\alpha')=((i,+),j)$. The case $\alpha=(i,-)$ and $\alpha'=j$ is treated similarly. The verification for odd $n$ is left to the reader.
\end{proof}

\subsection[Multiplicities for symmetry breaking operators between spinor-valued principal series]{Multiplicities for symmetry breaking operators\\ between spinor-valued principal series}

Inserting the explicit constants $\lambda_{\alpha,\alpha'}^{\beta,\beta'}$ determined in Lemma~\ref{lem:ExplicitConstantsLambda} into the scalar identities \eqref{eq:GeneralScalarIdentity} we obtain an explicit characterization of symmetry breaking operators in terms of scalar identities. We assume that $n$ is even for the statement of these identities, the case of odd~$n$ is similar.

\begin{Corollary}\label{cor:ScalarIdentities} Assume $n$ is even. Then a $\widetilde{K}_{H,1}$-intertwining operator $T\colon (\pislash_\lambda)_\HC\to(\tauslash_\nu)_\HC$ is $\big(\frakh,\widetilde{K}_{H,1}\big)$-intertwining if and only if the scalars $t_{(i,\pm),j}$ defined by \eqref{eq:IntertwinersAsScalars} satisfy the following three relations:
\begin{gather*}
 (n+2i-1)(n+2i+1)\big(\nu+\rho_H+\tfrac{1}{2}+j\big)t_{(i,\pm),j}\\
 \qquad {}= (n+2i-1)(n+2j-1)\big(\lambda+\rho+\tfrac{1}{2}+i\big)t_{(i+1,\pm),j+1} \pm 2(n+2j-1)\sqrt{-1}\lambda t_{(i,\mp),j+1}\notag\\
 \qquad\quad {}- (n+2i+1)(n+2j-1)\big(\lambda-\rho+\tfrac{1}{2}-i\big)t_{(i-1,\pm),j+1},\notag\\
 (n+2i-1)(n+2i+1)\nu t_{(i,\pm),j}\\
 \qquad {}= \mp(i-j+1)(n+2i-1)\sqrt{-1}\big(\lambda+\rho+\tfrac{1}{2}+i\big)t_{(i+1,\pm),j}\notag\\
 \qquad\quad {}+ (n+2i)(n+2j-1)\lambda t_{(i,\mp),j}\notag\\
 \qquad\quad {}\pm (n+2i+1)(n+i+j-1)\sqrt{-1}\big(\lambda-\rho+\tfrac{1}{2}-i\big)t_{(i-1,\pm),j},\notag\\
 (n+2i-1)(n+2i+1)(n+2j-3)\big(\nu-\rho_H+\tfrac{1}{2}-j\big)t_{(i,\pm),j}\\
 \qquad {}= -(i-j+1)(i-j+2)(n+2i-1)\big(\lambda+\rho+\tfrac{1}{2}+i\big)t_{(i+1,\pm),j-1}\notag\\
 \qquad\quad {}\pm 2(i-j+1)(n+i+j-1)\sqrt{-1}\lambda t_{(i,\mp),j-1}\notag\\
 \qquad\quad {}+ (n+2i+1)(n+i+j-2)(n+i+j-1)\big(\lambda-\rho+\tfrac{1}{2}-i\big)t_{(i-1,\pm),j-1}.\notag
\end{gather*}
\end{Corollary}

As previously carried out in \cite[Section 4.3]{MO14a} for similar relations, we solve this system to obtain the following result about multiplicities of symmetry breaking operators:

\begin{Theorem}\label{thm:MultiplicitiesForG1H1}
\begin{gather*} \dim\Hom_{(\frakh,\widetilde{K}_{H,1})}\big((\pislash_\lambda)_\HC|_{(\frakh,\widetilde{K}_{H,1})},(\tauslash_\nu)_\HC\big)=\begin{cases}3,&\mbox{for $(\lambda,\nu)\in\Lslash=\Lslash_\even\cup\Lslash_\odd$,}\\ 2,&\mbox{else.}\end{cases} \end{gather*}
\end{Theorem}

\begin{proof}We assume $n$ is even, the case of $n$ odd is treated similarly. It is more convenient to work with the scalars
\begin{gather*} s_{i,j}^\pm = t_{(i,+),j} \pm (-1)^{i-j}t_{(i,-),j}, \end{gather*}
then the identities in Corollary~\ref{cor:ScalarIdentities} become
\begin{gather}
 (n+2i-1)(n+2i+1)\big(\nu+\rho_H+\tfrac{1}{2}+j\big)s_{i,j}^\pm\notag\\
 \qquad {}= (n+2i-1)(n+2j-1)\big(\lambda+\rho+\tfrac{1}{2}+i\big)s_{i+1,j+1}^\pm\notag\\
 \qquad\quad {}\pm(-1)^{i-j+1}2(n+2j-1)\sqrt{-1}\lambda s_{i,j+1}^\pm\notag\\
 \qquad\quad {}- (n+2i+1)(n+2j-1)\big(\lambda-\rho+\tfrac{1}{2}-i\big)s_{i-1,j+1}^\pm,\label{eq:ScalarIdentity1}\\
 \big((n+2i-1)(n+2i+1)\nu \mp(-1)^{i-j} (n+2i)(n+2j-1)\lambda\big)s_{i,j}^\pm\notag\\
 \qquad{} = -(i-j+1)(n+2i-1)\sqrt{-1}\big(\lambda+\rho+\tfrac{1}{2}+i\big)s_{i+1,j}^\pm\notag\\
 \qquad\quad {}+ (n+2i+1)(n+i+j-1)\sqrt{-1}\big(\lambda-\rho+\tfrac{1}{2}-i\big)s_{i-1,j}^\pm,\label{eq:ScalarIdentity2}\\
 (n+2i-1)(n+2i+1)(n+2j-3)\big(\nu-\rho_H+\tfrac{1}{2}-j\big)s_{i,j}^\pm\notag\\
 \qquad {}= -(i-j+1)(i-j+2)(n+2i-1)\big(\lambda+\rho+\tfrac{1}{2}+i\big)s_{i+1,j-1}^\pm\notag\\
 \qquad\quad {} \pm(-1)^{i-j+1}2(i-j+1)(n+i+j-1)\sqrt{-1}\lambda s_{i,j-1}^\pm\notag\\
 \qquad\quad {}+ (n+2i+1)(n+i+j-2)(n+i+j-1)\big(\lambda-\rho+\tfrac{1}{2}-i\big)s_{i-1,j-1}^\pm.\label{eq:ScalarIdentity3}
\end{gather}
Note that each identity only involves scalars $s_{i,j}^\pm$ with one choice of sign $\pm$, so that the system of equations degenerates into two systems of equations, one for $s_{i,j}^+$ and one for $s_{i,j}^-$. Fixing a~sign~$\pm$ we have to find the dimension of the space of tuples $(s_{i,j}^\pm)_{0\leq j\leq i}$ satisfying the identities \eqref{eq:ScalarIdentity1}--\eqref{eq:ScalarIdentity3}. Let us first visualize the $\widetilde{K}_1$-$\widetilde{K}_{H,1}$-type picture in a diagram:
\setlength{\unitlength}{4pt}
\begin{center}\begin{picture}(26,26)
\thicklines
\put(0,0){\vector(1,0){25}}
\put(0,0){\vector(0,1){25}}

\multiput(0,0)(10,0){3}{\circle*{1}}
\multiput(5,5)(10,0){2}{\circle*{1}}
\multiput(10,10)(10,0){2}{\circle*{1}}
\multiput(15,15)(10,0){1}{\circle*{1}}
\multiput(20,20)(10,0){1}{\circle*{1}}

\multiput(5,0)(10,0){2}{\circle*{1}}
\multiput(10,5)(10,0){2}{\circle*{1}}
\multiput(15,10)(10,0){1}{\circle*{1}}
\multiput(20,15)(10,0){1}{\circle*{1}}
\put(23,1){$i$}
\put(1,23){$j$}
\end{picture}\end{center}
Here the dot at position $(i,j)$ represents the scalar $s_{i,j}^\pm$. Now, each identity is a linear relation between certain neighboring dots in this diagram, visualized as
\setlength{\unitlength}{3.5pt}
\begin{center}\begin{picture}(80,19)
\thicklines
\put(0,10){\circle*{1}}
\put(8,2){\circle*{1}}
\put(8,10){\circle*{1}}
\put(16,10){\circle*{1}}
\put(1,9){\line(1,-1){6}}
\put(15,9){\line(-1,-1){6}}
\put(8,3){\line(0,1){6}}
\put(4.6,-1){$(i,j)$}
\put(4.6,17){\eqref{eq:ScalarIdentity1}}

\put(32,6){\circle*{1}}
\put(40,6){\circle*{1}}
\put(48,6){\circle*{1}}
\put(33,6){\line(1,0){6}}
\put(41,6){\line(1,0){6}}
\put(36.6,2.6){$(i,j)$}
\put(36.6,17){\eqref{eq:ScalarIdentity2}}

\put(64,0){\circle*{1}}
\put(72,8){\circle*{1}}
\put(72,0){\circle*{1}}
\put(80,0){\circle*{1}}
\put(65,1){\line(1,1){6}}
\put(79,1){\line(-1,1){6}}
\put(72,1){\line(0,1){6}}
\put(68.6,10){$(i,j)$}
\put(68.6,17){\eqref{eq:ScalarIdentity3}}
\end{picture}\end{center}
Note that the only coefficients in the identities that can possibly vanish are those involving $\lambda$ and $\nu$. Identity \eqref{eq:ScalarIdentity1} at $(i,i)$ gives
\begin{gather}
(n+2j-1)\big(\lambda+\rho+\tfrac{1}{2}+i\big)s_{i+1,i+1}^\pm = (n+2i+1)\big(\nu+\rho_H+\tfrac{1}{2}+i\big)s_{i,i}^\pm.\label{eq:ScalarIdentityDiagonal}
\end{gather}
It is easy to see that the dimension of the space of diagonal sequences $(s_{i,i}^\pm)$ satisfying~\eqref{eq:ScalarIdentityDiagonal} is equal to $1$ if $(\lambda,\nu)\in\CC^2\setminus\Lslash$ and equal to $2$ if $(\lambda,\nu)\in\Lslash$ (see \cite[Lemma~4.4]{MO14a} for the same argument in a similar setting).

\textbf{Step 1.} Now let us first assume that $\lambda\notin-\rho-\frac{1}{2}-\NN$, in particular $(\lambda,\nu)\in\CC^2\setminus\Lslash$. Then identity~\eqref{eq:ScalarIdentity2} can be used to define $s_{i+1,j}^\pm$ in terms of $s_{i,j}^\pm$ and $s_{i-1,j}^\pm$ since the coefficient $\big(\lambda+\rho+\frac{1}{2}+i\big)$ of $s_{i+1,j}^\pm$ never vanishes. This shows that each diagonal sequence $(s_{i,i}^\pm)$ uniquely determines the numbers $s_{i,j}^\pm$ for all $0\leq j\leq i$ and hence, for both signs $\pm$ the dimension of the space of sequences~$\big(s_{i,j}^\pm\big)$ satisfying \eqref{eq:ScalarIdentity1}--\eqref{eq:ScalarIdentity3} is $1$.

\textbf{Step 2.} Now assume $\lambda=-\rho-\frac{1}{2}-k$, $k\geq0$, but $(\lambda,\nu)\in\CC^2\setminus\Lslash$. Then the extension argument from Step~1 using \eqref{eq:ScalarIdentity2} still works for $i>k$ and hence any diagonal sequence uniquely determines the numbers $s_{i,j}^\pm$ for $j>k$. Next, we can repeatedly use \eqref{eq:ScalarIdentity1} with $j\leq k$ to define~$s_{i,j}^\pm$ in terms of~$s_{i+1,j+1}^\pm$, $s_{i,j+1}^\pm$ and $s_{i-1,j+1}^\pm$. Note that the coefficient $\big(\nu+\rho_H+\frac{1}{2}+j\big)$ of $s_{i,j}^\pm$ never vanishes since $j\leq k$ and $(\lambda,\nu)\notin\Lslash$. Therefore, also in this case, for both signs $\pm$ the space of sequences $(s_{i,j}^\pm)$ satisfying \eqref{eq:ScalarIdentity1}--\eqref{eq:ScalarIdentity3} is one-dimensional.

\textbf{Step 3.} Finally assume $(\lambda,\nu)=\big({-}\rho-\frac{1}{2}-k,-\rho_H-\frac{1}{2}-\ell\big)\in\Lslash$, $0\leq\ell\leq k$. We only discuss the case where $k-\ell\in2\NN$, the case $k-\ell\in2\NN+1$ is treated similarly. We divide the $\widetilde{K}_1$--$\widetilde{K}_{H,1}$-type picture into four regions accoding to whether $i\leq k$ or $i>k$ and $j\leq\ell$ or $j>\ell$:
\setlength{\unitlength}{4pt}
\begin{center}\begin{picture}(41,30)
\thicklines
\put(0,2){\vector(1,0){30}}
\put(0,2){\vector(0,1){26}}
\dashline{1}(-2,9.5)(30,9.5)
\dashline{1}(17.5,0)(17.5,28)
\multiput(0,2)(5,5){6}{\circle*{1}}
\multiput(5,2)(5,5){5}{\circle*{1}}
\multiput(10,2)(5,5){4}{\circle*{1}}
\multiput(15,2)(5,5){3}{\circle*{1}}
\multiput(20,2)(5,5){2}{\circle*{1}}
\put(25,2){\circle*{1}}
\put(-1.5,3){$*$}
\put(3.5,8){$*$}
\put(8.5,13){$0$}
\put(13.5,13){$0$}
\put(13.5,18){$0$}
\put(18.5,23){$*$}
\put(23.5,28){$*$}
\put(28,3){$i$}
\put(1,26){$j$}
\put(14.5,-1.5){$k$}
\put(18.5,-1.5){$k+1$}
\put(-2.5,6){$\ell$}
\put(-6.5,11){$\ell+1$}
\end{picture}\end{center}
Then from the diagonal identity \eqref{eq:ScalarIdentityDiagonal} it follows that the diagonal entries in the upper left region are $0$ whereas the diagonal entries in the upper right and the lower left region can be chosen independently (as indicated by the stars and zeros). This shows that the space of diagonal sequences satisfying~\eqref{eq:ScalarIdentityDiagonal} is two-dimensional. Using~\eqref{eq:ScalarIdentity2} as before shows that in fact all scalars in the upper left region have to be~$0$. Now we study what happens near the intersection of the two dashed lines. For instance, we have two identities that relate $s_{k,\ell}^\pm$ and $s_{k-1,\ell}^\pm$, namely~\eqref{eq:ScalarIdentity2} for $(i,j)=(k,\ell)$:
\begin{gather}
\big((n+2k-1)(n+2\ell)\mp(n+2k)(n+2\ell-1)\big)s_{k,\ell}^\pm\nonumber\\
\qquad {} = 2(n+2k)(n+k+\ell-1)\sqrt{-1}s_{k-1,\ell}^\pm,\label{eq:ScalarIdentityDepIndep1}
\end{gather}
and \eqref{eq:ScalarIdentity3} for $(i,j)=(k,\ell+1)$:
\begin{gather}
\pm(k-\ell)s_{k,\ell}^\pm = (n+2k)(n+k+\ell-1)\sqrt{-1}s_{k-1,\ell}^\pm.\label{eq:ScalarIdentityDepIndep2}
\end{gather}
Let us first consider the positive sign scalars $s_{i,j}^+$. In this case the coefficient of $s_{k,\ell}^+$ in \eqref{eq:ScalarIdentityDepIndep1} is
\begin{gather*} (n+2k-1)(n+2\ell)-(n+2k)(n+2\ell-1) = 2(k-\ell) \end{gather*}
so that \eqref{eq:ScalarIdentityDepIndep1} is a multiple of \eqref{eq:ScalarIdentityDepIndep2}. Hence, the two identities \eqref{eq:ScalarIdentityDepIndep1} and~\eqref{eq:ScalarIdentityDepIndep2} are dependent and do not force $s_{k,\ell}^\pm$ and $s_{k-1,\ell}^\pm$ to be $0$. As above one can uniquely extend any diagonal sequence to the whole lower left region $s_{i,j}^+$ with $i\leq k$, $j\leq\ell$. For the negative sign scalars $s_{i,j}^-$ the opposite is true: the relations~\eqref{eq:ScalarIdentityDepIndep1} and \eqref{eq:ScalarIdentityDepIndep2} are independent and hence $s_{k-1,\ell}^-=s_{k,\ell}^-=0$. Extending the zeros in the other direction forces all scalars $s_{i,j}^-$ in the lower left region to be $0$. The same happens for the upper right region, where~\eqref{eq:ScalarIdentity1} for $(i,j)=(k+1,\ell)$ and \eqref{eq:ScalarIdentity2} for $(i,j)=(k+1,\ell+1)$ are two dependent resp. independent relations for $s_{k+1,\ell+1}^+$ and $s_{k+2,\ell+1}^+$, resp.~$s_{k+1,\ell+1}^-$ and~$s_{k+2,\ell+1}^-$. This shows that the space of scalars in the upper left, lower left and upper right region satisfying~\eqref{eq:ScalarIdentity1}--\eqref{eq:ScalarIdentity3} is two-dimensional for $s_{i,j}^+$ and trivial for $s_{i,j}^-$. It remains to extend such tuples to the lower right region. Assuming that we already have defined~$s_{i,j}^\pm$ in the first three regions, there are two linear relations for $s_{k+1,\ell}^\pm$ and $s_{k+2,\ell}^\pm$, namely~\eqref{eq:ScalarIdentity2} for $(i,j)=(k+1,\ell)$:
\begin{gather}
\big((n+2k+3)(n+2\ell)\pm(n+2k+2)(n+2\ell-1)\big)s_{k+1,\ell}^\pm\nonumber\\
\qquad{}- 2(k-\ell+2)\sqrt{-1}s_{k+2,\ell}^\pm = *,\label{eq:ScalarIdentityDepIndep3}
\end{gather}
and \eqref{eq:ScalarIdentity3} for $(i,j)=(k+1,\ell+1)$:
\begin{gather}
\mp(n+k+\ell+1)s_{k+1,\ell}^\pm - (k-\ell+2)\sqrt{-1}s_{k+2,\ell}^\pm = *.\label{eq:ScalarIdentityDepIndep4}
\end{gather}
Here we write $*$ for the right hand side which is a linear combination of scalars from the other three regions which we already specified. Now, for the negative sign scalars the coefficient of~$s_{k+1,\ell}^-$ in~\eqref{eq:ScalarIdentityDepIndep3} is equal to
\begin{gather*} (n+2k+3)(n+2\ell)-(n+2k+2)(n+2\ell-1) = 2(n+k+\ell+1) \end{gather*}
and the two relations are dependent. This means that every choice of $s_{k+1,\ell}^-\in\CC$ can be uniquely extended to $s_{i,j}^-$, $0\leq j\leq i$, using the extension methods from~(1) and~(2) (with $s_{i,j}^-=0$ if $i\leq k$ or $j>\ell$). Hence, the dimension of the space of tuples $(s_{i,j}^-)$ satisfying \eqref{eq:ScalarIdentity1}--\eqref{eq:ScalarIdentity3} is $1$. For the positive sign scalars, it is easy to see that the relations~\eqref{eq:ScalarIdentityDepIndep3} and~\eqref{eq:ScalarIdentityDepIndep4} are independent, and therefore $s_{k+1,\ell}^+$ and $s_{k+2,\ell}^+$ are uniquely determined by the chosen scalars in the other three regions. Together with the extension techniques outlined in Steps~1 and~2 this implies that the dimension of the space of tuples $(s_{i,j}^+)$ satisfying \eqref{eq:ScalarIdentity1}--\eqref{eq:ScalarIdentity3} is $2$. This proves the claim.
\end{proof}

\subsection[Classification of symmetry breaking operators between spinor-valued principal series]{Classification of symmetry breaking operators\\ between spinor-valued principal series}

Combining Theorems~\ref{thm:SBOsSpinors+}, \ref{thm:SBOsSpinors-} and Proposition~\ref{prop:ProjOntoIrrSummands} with Lemma~\ref{lem:ReductionToG1H1} and Theorem~\ref{thm:MultiplicitiesForG1H1} we obtain a full classification of symmetry breaking operators between spinor-valued principal series representations:

\begin{Theorem}\label{thm:ClassificationSBOsSpinors}
We have
\begin{gather*}
\calD'\big(\RR^n;\Hom_\CC(\SS_n,\SS_{n-1})\big)_{\lambda,\nu}^+ =
\begin{cases}
\CC\big(P\widetilde{\Kslash}^{\AA,+}_{\lambda,\nu}\big),&\mbox{for $(\lambda,\nu)\in\CC^2\setminus\Lslash_\even$,}\\
\CC\big(P\doublewidetilde{\Kslash}^{\AA,+}_{\lambda,\nu}\big)\oplus\CC\big(P\widetilde{\Kslash}^{\CC,+}_{\lambda,\nu}\big), &\mbox{for $(\lambda,\nu)\in\Lslash_\even$,}
\end{cases}\\
\calD'\big(\RR^n;\Hom_\CC(\SS_n,\SS_{n-1})\big)_{\lambda,\nu}^- =
\begin{cases}
\CC\big(P\widetilde{\Kslash}^{\AA,-}_{\lambda,\nu}\big),&\mbox{for $(\lambda,\nu)\in\CC^2\setminus\Lslash_\odd$,}\\
\CC\big(P\doublewidetilde{\Kslash}^{\AA,-}_{\lambda,\nu}\big)\oplus\CC\big(P\widetilde{\Kslash}^{\CC,-}_{\lambda,\nu}\big),&\mbox{for $(\lambda,\nu)\in\Lslash_\odd$.}
\end{cases}
\end{gather*}
\end{Theorem}

\subsection{Symmetry breaking operators at reducibility points}

We study symmetry breaking operators between irreducible constituents of $\slashed{\pi}_{\lambda,\delta}$ and $\slashed{\tau}_{\nu,\varepsilon}$ at reducibility points. For this we first describe the irreducible constituents.

\begin{Lemma}\label{lem:SpinReducibility}\quad
\begin{enumerate}\itemsep=0pt
\item[$1.$] The representation $\slashed{\pi}_{\lambda,\delta}$ is reducible if and only if $\lambda\in\pm(\rho+\frac{1}{2}+\NN)$.
\item[$2.$] For $\lambda=-\rho-\frac{1}{2}-i$, $i\in\NN$, the representation $\slashed{\pi}_{\lambda,\delta}$ has a unique irreducible subrepresenta\-tion~$\calF_\delta(i)$ which is finite-dimensional, and the quotient $\calT_\delta(i)=\slashed{\pi}_{\lambda,\delta}/\calF_\delta(i)$ is irreducible.
\item[$3.$] For $\lambda=\rho+\frac{1}{2}+i$, $i\in\NN$, the representation $\slashed{\pi}_{\lambda,\delta}$ has a unique irreducible subrepresentation isomorphic to $\calT_{1-\delta}(i)\otimes\det$ an irreducible quotient isomorphic to $\calF_{1-\delta}(i)\otimes\det$, where $\det\colon \Pin(n+1,1)\to\upO(n+1,1)\to\{\pm1\}$ is the determinant character.
\end{enumerate}
\end{Lemma}

\begin{proof}We first describe the $K$-type decomposition of $\slashed{\pi}_{\lambda,\delta}$. The maximal compact subgroup $\widetilde{K}$ of $\widetilde{G}$ is a semidirect product of $\Pin(n+1)$ with the two-element group which acts on $\Pin(n+1)$ via the canonical automorphism $\alpha$ of the Clifford algebra $\Cl(n+1)$ (see Appendix~\ref{app:CliffordAlgebrasPinGroups} for details). As remarked in Section~\ref{sec:SpinorKtypes}, the restriction of $\slashed{\pi}_{\lambda,\delta}$ to $\Pin(n+1)\simeq\widetilde{K}_1\subseteq\widetilde{K}$ decomposes into a multiplicity-free direct sum of higher spin representations. For $n$ even, it is the direct sum of $\zeta_{n+1,i}^\pm$, $i\geq0$, and it is easy to see that $\zeta_{n+1,i}^+\oplus\zeta_{n+1,i}^-$ extends uniquely to an irreducible representation $\widetilde{\zeta}_{n+1,i}$ of $\widetilde{K}$. For $n$ odd, the restriction of $\slashed{\pi}_{\lambda,\delta}$ to $\widetilde{K}_1$ is the direct sum of $\zeta_{n+1,i}$, $i\geq0$, and there are two inequivalent ways of extending $\zeta_{n+1,i}$ to an irreducible $\widetilde{K}$-representation. For fixed $\delta\in\ZZ/2\ZZ$ let $\widetilde{\zeta}_{n+1,i}$ denote the extension which occurs in $\slashed{\pi}_{\lambda,\delta}$. Then, for both even and odd $n$ we have
\begin{gather*} (\slashed{\pi}_{\lambda,\delta})_\HC = \bigoplus_{i=0}^\infty\widetilde{\zeta}_{n+1,i}. \end{gather*}
The Lie algebra action of $\frakg$ maps $\widetilde{\zeta}_{n+1,i}$ into the direct sum of $\widetilde{\zeta}_{n+1,i+1}$, $\widetilde{\zeta}_{n+1,i}$ and $\widetilde{\zeta}_{n+1,i-1}$. From \cite{BOO96} and the computations in Section~\ref{sec:ScalarIdentities} it follows that one can reach $\widetilde{\zeta}_{n+1,i+1}$ if and only if $2\lambda+2i+n+1\neq0$, and one can reach $\widetilde{\zeta}_{n+1,i-1}$ if and only if $2\lambda-2i-n+1\neq0$. Then statements~(1) and~(2) follow with
\begin{gather*}
\calT_\delta(i)_\HC \simeq \bigoplus_{k=i+1}^\infty\widetilde{\zeta}_{n+1,k} \qquad \mbox{and} \qquad \calF_\delta(i)_\HC \simeq \bigoplus_{k=0}^i\widetilde{\zeta}_{n+1,k}.
\end{gather*}
To prove (3) observe that the standard intertwining operators in this situation (see, e.g., Remark~\ref{rem:ClassicalKnappStein})
\begin{gather*} \Ind_{\widetilde{P}}^{\widetilde{G}}\big(\big(\zeta_n\otimes\sgn^\delta\big)\otimes e^\lambda\otimes\1\big) \to \Ind_{\widetilde{P}}^{\widetilde{G}}\big(\big([\zeta_n\otimes\det]\otimes\sgn^{1-\delta}\big)\otimes e^{-\lambda}\otimes\1\big) \end{gather*}
map irreducible quotients to irreducible subrepresentations. Since the character $\det$ of $\widetilde{M}$ extends to $\widetilde{G}$ we have
\begin{gather*} \Ind_{\widetilde{P}}^{\widetilde{G}}\big(\big([\zeta_n\otimes\det]\otimes\sgn^{1-\delta}\big)\otimes e^{-\lambda}\otimes\1\big) \simeq \Ind_{\widetilde{P}}^{\widetilde{G}}\big(\big(\zeta_n\otimes\sgn^{1-\delta}\big)\otimes e^{-\lambda}\otimes\1\big)\otimes\det, \end{gather*}
and the claim follows by specializing to $\lambda=\pm\big(\rho+\frac{1}{2}+i\big)$.
\end{proof}

Note that the representations $\calF_\delta(i)$ and $\calT_\delta(i)$ depend on the chosen spin-representation~$\zeta$ of~$\widetilde{M}$ that $\slashed{\pi}_{\lambda,\delta}$ is induced from. However, as in the case of the full principal series, the multiplicities of intertwining operators turn out to be independent of $\zeta$.

Denote by $\calT'_\varepsilon(j)$ and $\calF'_\varepsilon(j)$ the corresponding composition factors of $\slashed{\tau}_{\nu,\varepsilon}$ at $\nu=\pm\big(\rho_H+\frac{1}{2}+j\big)$, $j\in\NN$.

\begin{Theorem}\label{thm:MultiplicitiesCompositionFactors} For the representations $\pi\in\{\calT_\delta(i),\calF_\delta(i)\}$ and $\tau\in\{\calT'_\varepsilon(j),\calF'_\varepsilon(j)\}$ the multipli\-cities $\dim\Hom_{\widetilde{H}}(\pi|_{\widetilde{H}},\tau)$ are given by
\begin{center}
\begin{tabular}{c|cc}
\diagonal{.2em}{.83cm}{$\pi$}{$\tau$} & $\calF'_\varepsilon(j)$ & $\calT'_\varepsilon(j)$ \\
\hline
$\calF_\delta(i)$ & $1$ & $0$\\
$\calT_\delta(i)$ & $0$ & $1$\\
\multicolumn{3}{c}{for $0\leq j\leq i$,}\\
\multicolumn{3}{c}{$i+j\equiv\delta+\varepsilon(2)$,}
\end{tabular}
\qquad
\begin{tabular}{c|cc}
\diagonal{.2em}{.83cm}{$\pi$}{$\tau$} & $\calF'_\varepsilon(j)$ & $\calT'_\varepsilon(j)$ \\
\hline
$\calF_\delta(i)$ & $0$ & $0$\\
$\calT_\delta(i)$ & $1$ & $0$\\
\multicolumn{3}{c}{otherwise.}\\
\multicolumn{3}{c}{\phantom{otherwise.}}
\end{tabular}
\end{center}
\end{Theorem}

\begin{proof}We only treat the case of even $n$, for odd $n$ similar arguments can be used. Write
\begin{gather*} (\slashed{\pi}_{\lambda,\delta})_\HC \simeq \bigoplus_{i=0}^\infty\widetilde{\zeta}_{n+1,i} \qquad \mbox{and} \qquad (\slashed{\tau}_{\nu,\varepsilon})_\HC \simeq \bigoplus_{j=0}^\infty\widetilde{\zeta}_{n,j} \end{gather*}
for the decomposition into irreducible representations of $\widetilde{K}$ and $\widetilde{K}_H$. Then, as in the proof of Theorem~\ref{thm:MultiplicitiesForG1H1}, an intertwining operator $(\slashed{\pi}_{\lambda,\delta})_\HC\to(\slashed{\tau}_{\nu,\varepsilon})_\HC$ is given by a sequence $(s_{i,j})_{0\leq j\leq i}$ of scalars, describing the action of the operator between $\widetilde{\zeta}_{n+1,i}$ and $\widetilde{\zeta}_{n,j}$. Analyzing the action of $\widetilde{H}/\widetilde{H}_1$ it is easy to see that intertwining operators are described by the scalars $s_{i,j}=s_{i,j}^+$ if $\delta+\varepsilon\equiv0(2)$ and by the scalars $s_{i,j}=s_{i,j}^-$ if $\delta+\varepsilon\equiv1(2)$. We can therefore use the identi\-ties~\eqref{eq:ScalarIdentity1},~\eqref{eq:ScalarIdentity2} and~\eqref{eq:ScalarIdentity3} for classification.

Next, observe that all intertwining operators between the irreducible constituents can be obtained from intertwining operators between the full principal series. For instance, intertwining operators $\calT_\delta(i)\to\calT'_\varepsilon(j)$ are intertwining operators $\slashed{\pi}_{\lambda,\delta}\to\slashed{\tau}_{\nu,1-\varepsilon}\otimes\det$ for $\lambda=-\rho-\frac{1}{2}-i$ and $\nu=\rho_H+\frac{1}{2}+j$ which vanish on the finite-dimensional subrepresentation of $\slashed{\pi}_{\lambda,\delta}$ (hence factor to the quotient $\calT_\delta(i)$) and whose image is contained in the irreducible subrepresentation $\calT'_\varepsilon(j)\subseteq\slashed{\tau}_{\nu,1-\varepsilon}\otimes\det$. By replacing the spin representation $\zeta_{n-1}$ of $\widetilde{M}_H$, that $\slashed{\tau}_{\nu,1-\varepsilon}$ is induced from, by $\zeta_{n-1}\otimes\det$ we may as well consider intertwining operators $\slashed{\pi}_{\lambda,\delta}\to\slashed{\tau}_{\nu,1-\varepsilon}$. Then the multiplicity $\dim\Hom_{(\frakh,\widetilde{K}_H)}(\calT_\delta(i)_\HC,\calT'_\varepsilon(j))$ is the dimension of the space of sequences $(s_{k,\ell})_{0\leq\ell\leq k}$ satisfying \eqref{eq:ScalarIdentity1}, \eqref{eq:ScalarIdentity2} and \eqref{eq:ScalarIdentity3} (with $+$ for $\delta+(1-\varepsilon)\equiv0(2)$ and $-$ for $\delta+(1-\varepsilon)\equiv1(2)$) such that $s_{k,\ell}=0$ whenever $k\leq i$ or $\ell\leq j$. By Theorem~\ref{thm:ClassificationSBOsSpinors} there is up to scalar multiples a unique intertwiner $\slashed{\pi}_{\lambda,\delta}\to\slashed{\tau}_{\nu,1-\varepsilon}$, and the arguments in the proof of Theorem~\ref{thm:MultiplicitiesForG1H1} show that for this intertwiner we have $s_{k,\ell}=0$ whenever $k\leq i$ or $\ell\leq j$ if and only if $j\leq i$ and $i+j\equiv\delta+\varepsilon(2)$. The other multiplicities are computed similarly.
\end{proof}

\appendix

\section{Clifford algebras, pin groups and their representations}\label{app:CliffordSpin}

We recall the basic definitions for Clifford algebras, pin groups and their representations. Most of the results are well-known or follow easily from the standard literature.

\subsection{Clifford algebras and pin groups}\label{app:CliffordAlgebrasPinGroups}

For $p,q\geq0$ we let $\RR^{p,q}=\big(\RR^{p+q},Q_{p,q}\big)$, where $Q=Q_{p,q}$ is the following quadratic form on $\RR^{p+q}$:
\begin{gather*} Q(x) = -x_1^2-\cdots-x_p^2+x_{p+1}^2+\cdots+x_{p+q}^2. \end{gather*}
Abusing notation, we also write $Q(x,y)$ for the associated symmetric bilinear form on $\RR^{p+q}$.

We define the Clifford algebra $\Cl(p,q)$ to be the unital $\RR$-algebra generated by $\RR^{p,q}$ subject to the relation
\begin{gather*} xy+yx = 2Q(x,y)\1. \end{gather*}
For the standard basis vectors $e_1,\ldots,e_{p+q}\in\RR^{p,q}$ this implies
\begin{gather*} e_i^2 = Q(e_i)\1, \qquad e_ie_j+e_je_i = 0, \qquad i\neq j. \end{gather*}
For $(p,q)=(n,0)$ we also write $\Cl(n)=\Cl(n,0)$ for short. The Clifford algebra $\Cl(p,q)$ has a~natural grading into even and odd elements
\begin{gather*} \Cl(p,q) = \Cl(p,q)_\even \oplus \Cl(p,q)_\odd. \end{gather*}
The map $\alpha$ of $\Cl(p,q)$ which acts by $+1$ on $\Cl(p,q)_\even$ and by $-1$ on $\Cl(p,q)_\odd$ is an algebra involution called the \textit{canonical automorphism}.

Denote by $\Cl(p,q;\CC)=\Cl(p,q)\otimes_\RR\CC$ and $\Cl(n;\CC)=\Cl(n)\otimes_\RR\CC$ the complexifications of~$\Cl(p,q)$ and~$\Cl(n)$. Abusing notation, we will also use $Q$ for the extension of the symmetric $\RR$-bilinear form $Q_{p,q}$ on $\RR^{p+q}$ to a symmetric $\CC$-bilinear form on $\CC^{p+q}$. Note that $\Cl(p,q;\CC)\simeq\Cl(p+q;\CC)$ as $\CC$-algebras.

We define the groups $\Pin(p,q)$ and $\Spin(p,q)$ by
\begin{gather*}
\Pin(p,q) = \big\{v_1\cdots v_k\colon k\geq0,v_i\in\RR^{p,q},Q(v_i)=\pm1\big\} \subseteq \Cl(p,q),\\
\Spin(p,q) = \Pin(p,q)\cap\Cl(p,q)_\even.
\end{gather*}
Further, put $\Pin(n)=\Pin(n,0)$ and $\Spin(n)=\Spin(n,0)$. Then $\Spin(n)$ is connected and $\Pin(n)=\Spin(n)\cup\Pin(n)_-$, where $\Pin(n)_-=\Spin(n)\cdot e_1$. For $p,q>0$ the group $\Pin(p,q)$ has four connected components
\begin{gather*} \Pin(p,q)_{++},\quad \Pin(p,q)_{+-},\quad \Pin(p,q)_{-+}\quad \mbox{and} \quad \Pin(p,q)_{--}, \end{gather*}
where the first~$+$ (resp.~$-$) means that the number of $v_i$'s with $Q(v_i)=1$ in the product of an element $x=v_1\cdots v_k\in\Pin(p,q)$ is even (resp.~odd), and the second $+$ (resp.~$-$) the same for the number of $v_i$'s with $Q(v_i)=-1$. Then clearly $\Spin(p,q)=\Pin(p,q)_{++}\cup\Pin(p,q)_{--}$, so $\Spin(p,q)$ has two connected components.

For all $p,q\geq0$ the group $\Pin(p,q)$ is a double cover of the indefinite orthogonal group $\upO(p,q)$, the covering map being
\begin{gather*} q\colon \ \Pin(p,q)\to\upO(p,q), \qquad q(x)y = \alpha(x)yx^{-1}. \end{gather*}
The restriction of $q$ to $\Spin(p,q)$ induces a double covering
\begin{gather*} q\colon \ \Spin(p,q)\to\SO(p,q). \end{gather*}

\subsection{Clifford modules and fundamental spin representations}\label{app:CliffordModules}

We now describe the irreducible representations of $\Cl(n;\CC)$. For $n=2m$ even there is only one irreducible representation, and for $n=2m+1$ odd there are two. To construct these we choose the maximal isotropic subspaces
\begin{gather*} W = \CC w_1\oplus\cdots\oplus\CC w_m \qquad \mbox{and} \qquad W' = \CC w_1'\oplus\cdots\oplus\CC w_m' \end{gather*}
of $\CC^n$, where $w_i=\frac{1}{2}\big(e_{2i-1}+\sqrt{-1}e_{2i}\big)$, $w_i'=\frac{1}{2}\big(e_{2i-1}-\sqrt{-1}e_{2i}\big)$. Then
\begin{gather*} \CC^n = \begin{cases}W\oplus W',&\mbox{for $n$ even},\\W\oplus W'\oplus\CC e_{2m+1},&\mbox{for $n$ odd.}\end{cases} \end{gather*}
We define an irreducible representation $\zeta_n$ of $\Cl(n;\CC)$ on $\SS_n=\bigwedge^\bullet W$ by
\begin{gather*}
\zeta_n(w)\xi_1\wedge\dots\wedge\xi_k = w\wedge\xi_1\wedge\dots\wedge\xi_k,\\
\zeta_n(w')\xi_1\wedge\dots\wedge\xi_k = 2\sum_{i=1}^k(-1)^{i-1}Q(w',\xi_i)\xi_1\wedge\dots\wedge\widehat{\xi_i}\wedge\dots\wedge\xi_k,
\end{gather*}
for $w\in W$, $w'\in W'$, and in case $n$ is odd additionally
\begin{gather*}
\zeta_n(e_{2m+1})\xi_1\wedge\dots\wedge\xi_k = (-1)^k\sqrt{-1}\xi_1\wedge\dots\wedge\xi_k.
\end{gather*}
Then for $n$ even $(\zeta_n,\SS_n)$ is the unique irreducible representation of $\Cl(n;\CC)$ and $\zeta_n\circ\alpha\simeq\zeta_n$, where $\alpha$ is the canonical automorphism of $\Cl(n;\CC)$. More precisely, the map
\begin{gather}
\gamma\colon \ \SS_n\to\SS_n, \qquad \gamma(\xi_1\wedge\dots\wedge\xi_k)=(-1)^k\xi_1\wedge\dots\wedge\xi_k\label{eq:DefGamma}
\end{gather}
intertwines $\zeta_n\circ\alpha$ and $\zeta_n$. For $n$ odd, $\zeta_n^+=\zeta_n$ and $\zeta_n^-=\zeta_n\circ\alpha$ are inequivalent and we obtain two irreducible inequivalent representations $(\zeta_n^\pm,\SS_n)$ of $\Cl(n;\CC)$. For $x\in\CC^n$ we write
\begin{gather*} \underline{x} = \zeta_n(x) \end{gather*}
whenever the representation $\zeta_n$ is clear from the context.

The restriction of any irreducible complex representation of the Clifford algebra $\Cl(n;\CC)$ to $\Pin(n)\subseteq\Cl(n)\subseteq\Cl(n;\CC)$ defines an irreducible representation of~$\Pin(n)$. These representations are called \textit{fundamental spin representations} and will also be denoted by $(\zeta_n,\SS_n)$ for $n$ even and $(\zeta_n^\pm,\SS_n)$ for~$n$ odd. Note that for $n$ even $\zeta_n\otimes\det\simeq\zeta_n$ and for $n$ odd $\zeta_n^\pm\otimes\det\simeq\zeta_n^\mp$, where $\det$ is the one-dimensional representation of $\Pin(n)$ given by the determinant character $\det\colon \Pin(n)\to\upO(n)\to\{\pm1\}$.

For $n$ even the restriction of $\zeta_n$ to $\Spin(n)\subseteq\Pin(n)$ decomposes into the direct sum of two irreducible representations of $\Spin(n)$ according to the decomposition
\begin{gather*} \textstyle\bigwedge^\bullet W = \bigwedge^\even W\oplus\bigwedge^\odd W. \end{gather*}
They have highest weights $\big(\frac{1}{2},\ldots,\frac{1}{2},\pm\frac{1}{2}\big)$ in the standard notation. For $n$ odd the restrictions of $\zeta_n^+$ and $\zeta_n^-$ to $\Spin(n)$ define equivalent irreducible representations of $\Spin(n)$ whose highest weight is $\big(\frac{1}{2},\ldots,\frac{1}{2}\big)$.

\subsection{Higher spin representations on monogenic polynomials}\label{app:HigherSpinReps}

Let $i\geq0$. For $n$ even the direct sum of the two irreducible $\Spin(n)$-representations with highest weight $\big(i+\frac{1}{2},\frac{1}{2},\ldots,\frac{1}{2},\pm\frac{1}{2}\big)$ extends uniquely to an irreducible representation of $\Pin(n)$ which we denote by $\zeta_{n,i}$. Note that $\zeta_{n,i}\otimes\det\simeq\zeta_{n,i}$. For $n$ odd the irreducible $\Spin(n)$-representation with highest weight $\big(i+\frac{1}{2},\frac{1}{2},\ldots,\frac{1}{2}\big)$ has two inequivalent extensions to $\Pin(n)$ which we denote by~$\zeta_{n,i}^\pm$.

We realize the representations $\zeta_{n,i}^{(\pm)}$ as monogenic polynomials on $\RR^n$. For each $i\geq0$ the group $\Pin(n)$ acts on the space $\Pol_i\big(\RR^n;\SS_n\big)$ of $\SS_n$-valued homogeneous polynomials on~$\RR^n$ of degree $i$ by
\begin{gather*} (g\cdot\phi)(x) = \zeta_n(g)\phi\big(q(g)^{-1}x\big), \qquad g\in\Pin(n),\qquad x\in\RR^n. \end{gather*}
We consider the Dirac operator $\dirac$ on $\Pol\big(\RR^n;\SS_n\big)=\bigoplus_{i=0}^\infty\Pol_i\big(\RR^n;\SS_n\big)$ given by
\begin{gather*} \dirac\phi(x) = \sum_{k=1}^n\underline{e_k}\,\frac{\partial\phi}{\partial x_k}(x). \end{gather*}
Then the space
\begin{gather*} \calM_i\big(\RR^n;\SS_n\big) = \big\{\phi\in\Pol_i\big(\RR^n;\SS_n\big)\colon \dirac\phi=0\big\} \end{gather*}
of \textit{homogeneous monogenic polynomials of degree $i$} is invariant under the action of $\Pin(n)$ and defines an irreducible representation $\zeta_{n,i}$ of $\Pin(n)$. For $n$ even, $\zeta_{n,i}\otimes\det\simeq\zeta_{n,i}$, the isomorphism being
\begin{gather*} \calM_i\big(\RR^n;\SS_n\big)\to\calM_i\big(\RR^n;\SS_n\big), \qquad \phi\mapsto\gamma\circ\phi. \end{gather*}
For $n$ odd, $\zeta_{n,i}^+=\zeta_{n,i}$ and $\zeta_{n,i}^-=\zeta_{n,i}\otimes\det$ define inequivalent irreducible representations $\big(\zeta_{n,i}^\pm,\calM_i\big(\RR^n;\SS_n\big)\big)$ of $\Pin(n)$.

\subsection{Branching laws}\label{app:BranchingLaws}

We give the explicit branching laws for the restriction of the $\Pin(n+1)$-representations $\zeta_{n+1,i}^{(\pm)}$ to $\Pin(n)$.

\subsubsection{Fundamental spin representations}

We use the explicit realizations of the representations $\zeta_n^{(\pm)}$ given in Section~\ref{app:CliffordModules}. Then $\SS_{n+1}=\SS_n$ for even $n$, and $\SS_{n+1}=\SS_n\oplus(\SS_n\wedge w_{m+1})$ for odd $n=2m+1$. Recall the map $\gamma\colon \SS_n\to\SS_n$ from~\eqref{eq:DefGamma}.

For $n$ even, the restriction of $\big(\zeta_{n+1}^\pm,\SS_{n+1}\big)$ to $\Cl(n;\CC)$ stays irreducible and is isomorphic to $(\zeta_n,\SS_n)$:

\begin{Lemma}\label{lem:FundSpinBranchingEven}Assume $n=2m$ is even.
\begin{enumerate}\itemsep=0pt
\item[$1.$] The explicit isomorphisms $\zeta_n\simeq\zeta_{n+1}^\pm|_{\Cl(n;\CC)}$ are given by
\begin{gather*}
(\zeta_n,\SS_n) \to \big(\zeta_{n+1}^+,\SS_{n+1}\big), \qquad \omega\mapsto\omega,\\
(\zeta_n,\SS_n) \to \big(\zeta_{n+1}^-,\SS_{n+1}), \qquad \omega\mapsto\gamma(\omega).
\end{gather*}
\item[$2.$] $\zeta_{n+1}(e_{n+1})=\sqrt{-1}\gamma$.
\end{enumerate}
\end{Lemma}

\begin{proof}This follows immediately from the definition of $\zeta_n$ and $\zeta_{n+1}^\pm$ and the fact that $\gamma$ intertwines $\zeta_n$ with $\zeta_n\circ\alpha$.
\end{proof}

For $n$ odd, the restriction of $(\zeta_{n+1},\SS_{n+1})$ to $\Cl(n;\CC)$ decomposes into the direct sum of $(\zeta_n^+,\SS_n)$ and $(\zeta_n^-,\SS_n)$:

\begin{Lemma}\label{lem:FundSpinBranchingOdd}Assume $n=2m+1$ is odd.
\begin{enumerate}\itemsep=0pt
\item[$1.$] The explicit embeddings $\zeta_n^\pm\hookrightarrow\zeta_{n+1}|_{\Cl(n;\CC)}$ are given by
\begin{gather*}
(\zeta_n^+,\SS_n)\hookrightarrow(\zeta_{n+1},\SS_{n+1}), \qquad \omega\mapsto\omega-\sqrt{-1}\omega\wedge w_{m+1},\\
(\zeta_n^-,\SS_n)\hookrightarrow(\zeta_{n+1},\SS_{n+1}), \qquad \omega\mapsto\gamma(\omega)+\sqrt{-1}\gamma(\omega)\wedge w_{m+1}.
\end{gather*}
In particular, the image of the embedding $\zeta_n^\pm\hookrightarrow\zeta_{n+1}$ is equal to
\begin{gather*} \SS_{n+1}^\pm = \big\{\omega\mp\sqrt{-1}\omega\wedge w_{m+1}\colon \omega\in\SS_n\big\}. \end{gather*}
\item[$2.$] $\zeta_{n+1}(e_{n+1})|_{\SS_{n+1}^\pm}=\mp\gamma$. In particular, the map $\frac{1}{2}(\id\pm\zeta_{n+1}(e_{n+1})\circ\gamma)$ is the canonical projection $\SS_{n+1}\to\SS_{n+1}^\pm$.
\end{enumerate}
\end{Lemma}

\begin{proof}It is easy to see that for $\omega\in\SS_n\subseteq\SS_{n+1}$ we have
\begin{alignat*}{3}
&\zeta_{n+1}(e_n)\omega= \gamma(\omega)\wedge w_{m+1}, \qquad & & \zeta_{n+1}(e_n)(\omega\wedge w_{m+1})= -\gamma(\omega),&\\
&\zeta_{n+1}(e_{n+1})\omega= -\sqrt{-1}\gamma(\omega)\wedge w_{m+1}, \qquad && \zeta_{n+1}(e_{n+1})(\omega\wedge w_{m+1})= -\sqrt{-1}\gamma(\omega),&
\end{alignat*}
then the claims follow.
\end{proof}

For the representations $\zeta_{n+1}^{(\pm)}$ of $\Pin(n+1)$ this implies
\begin{alignat*}{3}
&\zeta_{n+1}^\pm|_{\Pin(n)}\simeq \zeta_n \qquad && \mbox{($n$ even)},&\\
&\zeta_{n+1}|_{\Pin(n)}\simeq \zeta_n^+\oplus\zeta_n^- \qquad && \mbox{($n$ odd)}.&
\end{alignat*}

\subsubsection{Higher spin representations}

Using classical branching laws for the pair $(\so(n+1),\so(n))$ of Lie algebras, it is easy to see that the branching laws for the higher spin representations are
\begin{alignat*}{3}
&\zeta_{n+1,i}^\pm|_{\Pin(n)}\simeq \bigoplus_{j=0}^i\zeta_{n,j} \qquad && \mbox{($n$ even)},& \\
&\zeta_{n+1,i}|_{\Pin(n)}\simeq \bigoplus_{j=0}^i\big(\zeta_{n,j}^+\oplus\zeta_{n,j}^-\big) \qquad && \mbox{($n$ odd)}.&
\end{alignat*}
To make this branching explicit in the realizations on monogenic polynomials, we use the classical Gegenbauer polynomials $C_n^\lambda(z)$ given by (see, e.g., \cite[Chapter~10.9, equation~(18)]{EMOT81})
\begin{gather*} C_n^\lambda(z) = \sum_{m=0}^{\lfloor n/2\rfloor}\frac{(-1)^m(\lambda)_{n-m}}{m!(n-2m)!}(2z)^{n-2m}. \end{gather*}
The polynomial $u=C_n^\lambda$ satisfies the differential equation (see \cite[Chapter 10.9, equation~(14)]{EMOT81})
\begin{gather*} (1-z^2)u'' - (2\lambda+1)zu' + n(n+2\lambda)u = 0. \end{gather*}

\begin{Lemma}\label{lem:BranchingMonogenics}Let $(\zeta_{n+1},\SS_{n+1})$ be a fundamental spin representation of $\Pin(n+1)$ and assume $(\zeta_n,\SS_n)$ occurs in the restriction of $\zeta_{n+1}$ to $\Pin(n)$. If we identify $\SS_n$ with a subspace of $\SS_{n+1}$, then for every $0\leq j\leq i$ the map
\begin{align*}
I_{j\to i}\colon \ & \calM_j\big(\RR^n;\SS_n\big)\to \calM_i\big(\RR^{n+1},\SS_{n+1}\big),\\
& I_{j\to i}\phi(x',x_{n+1}) = (n+i+j-1)|x|^{i-j}\phi(x')C_{i-j}^{\frac{n-1}{2}+j}\big(\tfrac{x_{n+1}}{|x|}\big)\\
&\hphantom{I_{j\to i}\phi(x',x_{n+1}) =}{} +(n+2j-1)|x|^{i-j-1}\underline{x'e_{n+1}}\phi(x')C_{i-j-1}^{\frac{n+1}{2}+j}\big(\tfrac{x_{n+1}}{|x|}\big)
\end{align*}
is $\Pin(n)$-intertwining, where $x=(x',x_{n+1})\in\RR^{n+1}$.
\end{Lemma}

\begin{proof}We first show the intertwining property. Let $g\in\Pin(n)$, then $(q(g)x)'=q(g)x'$ and $(q(g)x)_{n+1}=x_{n+1}$. Further,
\begin{align*}
\zeta_{n+1}(g)\circ\underline{x'e_{n+1}} &= \zeta_{n+1}(g)\circ\zeta_{n+1}(x')\circ\zeta_{n+1}(e_{n+1})\\
&= \zeta_{n+1}\big(\alpha(g)x'g^{-1}\big)\circ\zeta_{n+1}\big(\alpha(g)e_{n+1}g^{-1}\big)\circ\zeta_{n+1}(g)\\
&= \zeta_{n+1}(q(g)x')\circ\zeta_{n+1}(q(g)e_{n+1})\circ\zeta_{n+1}(g)\\
&= \underline{(q(g)x')e_{n+1}}\circ\zeta_{n+1}(g).
\end{align*}
Then the intertwining property follows. It remains to show that $I_{j\to i}\phi$ is monogenic, and for this we abbreviate
\begin{gather*} p=(n+i+j-1)C_{i-j}^{\frac{n-1}{2}+j} \qquad \mbox{and} \qquad q=(n+2j-1)C_{i-j-1}^{\frac{n+1}{2}+j}. \end{gather*}
Then
\begin{gather*} I_{j\to i}\phi(x',x_{n+1}) = |x|^{i-j}\phi(x')p\big(\tfrac{x_{n+1}}{|x|}\big)+|x|^{i-j-1}\underline{x'e_{n+1}}\phi(x')q\big(\tfrac{x_{n+1}}{|x|}\big). \end{gather*}
Applying the Dirac operator to $I_{j\to i}\phi$ yields, after a short computation:
\begin{gather*}
 \dirac(I_{j\to i}\phi)(x',x_{n+1}) = \sum_{k=1}^n\underline{e_k}\,\frac{\partial(I_{j\to i}\phi)}{\partial x_i}(x',x_{n+1})+\underline{e_{n+1}\,}\frac{\partial(I_{j\to i}\phi)}{\partial x_{n+1}}(x',x_{n+1})\\
\qquad{} = |x|^{i-j-2}\underline{x'}\phi(x')\big((i-j)p(z)-zp'(z)+(i-j-1)zq(z)+\big(1-z^2\big)q'(z)\big)\\
\qquad\quad{} +|x|^{i-j-1}\underline{e_{n+1}}\phi(x')\big((i-j)zp(z)+\big(1-z^2\big)p'(z)-(n+i+j-1)q(z)\\
\qquad\quad{}+(i-j-1)z^2q(z)+\big(1-z^2\big)zq'(z)\big),
\end{gather*}
where $z=\frac{x_{n+1}}{|x|}$. This vanishes if and only if
\begin{gather*}
(i-j)p-zp' = -(i-j-1)zq-(1-z^2)q',\\
(i-j)zp+\big(1-z^2\big)p' = (n+i+j-1)q-(k-j-1)z^2q-\big(1-z^2\big)zq'.
\end{gather*}
Multiplying the first equation with $z$ and subtracting it from the second one yields
\begin{gather*} p' = (n+i+j-1)q. \end{gather*}
If we now insert this into the first equation we obtain
\begin{gather*} \big(1-z^2\big)p''-(n+2j)zp'+(i-j)(n+i+j-1)p = 0, \end{gather*}
which is the Gegenbauer differential equation with solution $p(z)=C_{i-j}^{\frac{n-1}{2}+j}(z)$, the classical Gegenbauer polynomial. Finally, using (see \cite[Chapter~10.9, equation~(23)]{EMOT81})
\begin{gather*} \frac{{\rm d}}{{\rm d}z}C_n^\lambda(z) = 2\lambda C_{n-1}^{\lambda+1}(z) \end{gather*}
and renormalizing $p$ and $q$ shows that indeed $\dirac(I_{j\to i}\phi)=0$ and the proof is complete.
\end{proof}

\subsection{Multiplication with coordinates}

By the Fischer decomposition we have
\begin{gather}
\Pol\big(\RR^n;\SS_n\big) = \bigoplus_{i,j=0}^\infty\underline{x}^j\calM_i\big(\RR^n;\SS_n\big).\label{eq:FischerDecomposition}
\end{gather}
Note that $\underline{x}^2=-|x|^2$. We now study how the product of a monogenic polynomial with a coordinate function $x_k$ decomposes according to this decomposition.

\begin{Lemma}\label{lem:MultWithCocycle}Let $\phi\in\calM_i\big(\RR^n;\SS_n\big)$ and $1\leq k\leq n$. Then
\begin{gather*} x_k\phi\in\calM_{i+1}\big(\RR^n;\SS_n\big)\oplus\underline{x}\calM_i\big(\RR^n;\SS_n\big)\oplus|x|^2\calM_{i-1}\big(\RR^n;\SS_n\big). \end{gather*}
More precisely, $x_k\phi=\phi_k^+-\underline{x}\phi_k^0+|x|^2\phi_k^-$ with
\begin{gather*}
\phi_k^+ = x_k\phi+\frac{1}{n+2i}\left(\underline{xe_k}\phi-|x|^2\frac{\partial\phi}{\partial x_k}\right)\in\calM_{i+1}\big(\RR^n;\SS_n\big),\\
\phi_k^0 = \frac{1}{n+2i}\left(\underline{e_k}\phi-\frac{2}{n+2i-2}\underline{x}\frac{\partial\phi}{\partial x_k}\right)\in\calM_i\big(\RR^n;\SS_n\big),\\
\phi_k^- = \frac{1}{n+2i-2}\frac{\partial\phi}{\partial x_k}\in\calM_{i-1}\big(\RR^n;\SS_n\big).
\end{gather*}
\end{Lemma}

\begin{proof}This follows from the following identities which are easily verified:
\begin{alignat*}{3}
&\dirac(x_k\phi)= \underline{e_k}\phi, \qquad & & \dirac(\underline{xe_k}\phi)= -(n+2i)\underline{e_k}\phi+2\underline{x}\frac{\partial\phi}{\partial x_k},&\\
& \dirac(\underline{e_k}\phi)= -2\frac{\partial\phi}{\partial x_k}, \qquad && \dirac\left(\underline{x}\frac{\partial\phi}{\partial x_k}\right)= -(n+2i-2)\frac{\partial\phi}{\partial x_k},&\\
&\dirac\frac{\partial\phi}{\partial x_k}= 0, \qquad && \dirac\left(|x|^2\frac{\partial\phi}{\partial x_k}\right)= 2\underline{x}\frac{\partial\phi}{\partial x_k}.&\tag*{\qed}
\end{alignat*}\renewcommand{\qed}{}
\end{proof}

Now, recall the intertwining map $I_{j\to i}\colon \calM_j\big(\RR^n;\SS_n\big)\to\calM_i\big(\RR^{n+1},\SS_{n+1}\big)$ from Lemma~\ref{lem:BranchingMonogenics}. The next result relates the decompositions of the polynomials $x_k(I_{j\to i}\phi)(x',x_{n+1})$ and $x_k\phi(x')$ for $1\leq k\leq n$.

\begin{Lemma}\label{lem:CocycleVsEmbedding}For $0\leq j\leq i$ and $1\leq k\leq n$ we have
\begin{gather*}
(I_{j\to i}\phi)_k^+ = \frac{n+2j-1}{n+2i+1}I_{j+1\to i+1}\big(\phi_k^+\big) + \frac{i-j+1}{n+2i+1}I_{j\to i+1}\big(\underline{e_{n+1}}\phi_k^0\big)\\
\hphantom{(I_{j\to i}\phi)_k^+ =}{} -\frac{(i-j+1)(i-j+2)}{(n+2i+1)(n+2j-3)}I_{j-1\to i+1}\big(\phi_k^-\big),\\
(I_{j\to i}\phi)_k^0 = -\frac{2(n+2j-1)}{(n+2i-1)(n+2i+1)}I_{j+1\to i}\big(\underline{e_{n+1}}\phi_k^+\big)\\
\hphantom{(I_{j\to i}\phi)_k^0 =}{} +\frac{(n+2i)(n+2j-1)}{(n+2i-1)(n+2i+1)}I_{j\to i}\big(\phi_k^0\big)\\
\hphantom{(I_{j\to i}\phi)_k^0 =}{} -\frac{2(i-j+1)(n+i+j-1)}{(n+2i-1)(n+2i+1)(n+2j-3)}I_{j-1\to i}\big(\underline{e_{n+1}}\phi_k^-\big),\\
(I_{j\to i}\phi)_k^- = -\frac{n+2j-1}{n+2i-1}I_{j+1\to i-1}\big(\phi_k^+\big) - \frac{n+i+j-1}{n+2i-1}I_{j\to i-1}\big(\underline{e_{n+1}}\phi_k^0\big)\\
\hphantom{(I_{j\to i}\phi)_k^- =}{}+\frac{(n+i+j-2)(n+i+j-1)}{(n+2i-1)(n+2j-3)}I_{j-1\to i-1}\big(\phi_k^-\big).
\end{gather*}
\end{Lemma}

Note that we identify $\SS_n$ with a subspace of $\SS_{n+1}$, so that $\underline{e_{n+1}}\SS_n$ is also a subspace of $\SS_{n+1}$ which is invariant under $\Pin(n)$. More precisely, if~$\Pin(n)$ acts on~$\SS_n$ by~$\zeta_n$, then it acts on~$\underline{e_{n+1}}\SS_n$ by~$\zeta_n\otimes\det$.

\begin{proof}This a lengthy but elementary computation involving several identities for Gegenbauer polynomials. We provide these identities and leave the computation to the reader. First, we have
\begin{gather}
\frac{{\rm d}}{{\rm d}z}C_n^\lambda = 2\lambda C_{n-1}^{\lambda+1},\label{eq:G1}\\
2\lambda zC_n^{\lambda+1} - (n+1)C_{n+1}^\lambda - 2\lambda C_{n-1}^{\lambda+1} = 0,\label{eq:G3}\\
2\lambda C_n^{\lambda+1} - (n+2\lambda)C_n^\lambda = 2\lambda zC_{n-1}^{\lambda+1},\label{eq:G6}\\
2\lambda\big(1-z^2\big)C_{n-1}^{\lambda+1} = (n+2\lambda)zC_n^\lambda - (n+1)C_{n+1}^\lambda,\label{eq:G7}\\
(\lambda-1)C_{n+1}^\lambda - (n+\lambda)C_{n+1}^{\lambda-1} = (\lambda-1)C_{n-1}^\lambda.\label{eq:G8}
\end{gather}
\eqref{eq:G1} is \cite[Chapter 10.9, equation~(23)]{EMOT81}, \eqref{eq:G3} follows from \cite[Chapter~10.9, equation~(24)]{EMOT81} and \eqref{eq:G1}, \eqref{eq:G6} is a consequence of \cite[Chapter~10.9, equation~(25)]{EMOT81} and \eqref{eq:G1}, \eqref{eq:G7} is \cite[Chapter~10.9, equation~(35)]{EMOT81} and \eqref{eq:G8} is \cite[Chapter~10.9, equation~(36)]{EMOT81}. Moreover, \eqref{eq:G6} combined with \eqref{eq:G7} implies
\begin{gather}
4\lambda(\lambda+1)\big(1-z^2\big)C_{n-2}^{\lambda+2} - 2\lambda(2\lambda+1)zC_{n-1}^{\lambda+1} + n(2\lambda+n)C_n^\lambda = 0,\label{eq:G5}
\end{gather}
\eqref{eq:G5} and \eqref{eq:G6} give
\begin{gather*}
4\lambda(\lambda+1)\big(1-z^2\big)C_{n-2}^{\lambda+2} + (2\lambda+n)(2\lambda+n+1)C_n^\lambda -2\lambda(2\lambda+1)C_n^{\lambda+1} = 0,
\end{gather*}
and \eqref{eq:G3} together with \eqref{eq:G5} implies
\begin{gather*}
4\lambda(\lambda+1)\big(1-z^2\big)C_{n-2}^{\lambda+2} - 2\lambda(2\lambda+1)C_{n-2}^{\lambda+1} + n(n-1)C_n^\lambda = 0.
\tag*{\qed}
\end{gather*}
\renewcommand{\qed}{}
\end{proof}

\pdfbookmark[1]{References}{ref}
\LastPageEnding

\end{document}